


\documentclass[11pt]{amsart}
\textheight = 615pt
\textwidth = 360pt


\usepackage{amsmath,amssymb,amsthm}
\usepackage{hyperref}
\usepackage{amsmath,amsfonts,amssymb,amsthm}
\usepackage{mathtools}
\usepackage{thmtools}

\newtheorem{lem}{Lemma}[section]
\newtheorem{teo}[lem]{Theorem}
 \newtheorem{Question}[lem]{Question}
 
\newtheorem{pro}[lem]{Proposition}
\newtheorem{cor}[lem]{Corollary}
\newtheorem{claim}[lem]{Claim}
\newtheorem{rem}[lem]{Remark}
\newtheorem{definition}[lem]{Definition}

\newtheorem*{teo*}{Theorem}

\newcounter{claimcounter}
\numberwithin{claimcounter}{lem}

\newcommand{\hookuparrow}{\mathrel{\rotatebox[origin=c]{90}{$\hookrightarrow$}}}

\newcommand{\myeq}[1]{\ensuremath{\stackrel{\text{#1}}{=}}}

\DeclareMathOperator{\D}{\mathcal D}

\DeclareMathOperator{\im}{Im}

\DeclareMathOperator{\Hom}{Hom}

\DeclareMathOperator{\rk}{rk}

\DeclareMathOperator{\Ann}{Ann}

\DeclareMathOperator{\Tor}{Tor}

\newcommand{\Z}{\mathbb{Z}}
\newcommand{\F}{\mathbb{F}}

\newcommand{\N}{\mathbb{N}}
\newcommand{\CC}{\mathbb{C}}
\newcommand{\Q}{\mathbb{Q}}

\subjclass[2010]{Primary: 20E06, Secondary:  16K40, 20C07, 20E18, 20E26}
 
 
 \keywords{Free $\Q$-groups, free pro-$p$ groups, mod-$p$ $L^2$-Betti numbers, L\"uck approximation, universal division ring of fractions ($\Q$-groupes libres, pro-$p$-groupes libres, mod-$p$ $L^2$-nombres de Betti, approximation de L\"uck, anneau de division universel de fractions)}
 
\title[Free  $\Q$-groups are residually torsion-free nilpotent]
{Free  $\Q$-groups are residually torsion-free nilpotent 
(Les $\Q$-groupes libres sont r\'esiduellement nilpotents sans torsion)}

\author{Andrei Jaikin-Zapirain}
 
\address{Departamento de Matem\'aticas, Universidad Aut\'onoma de Madrid \and  Instituto de Ciencias Matem\'aticas, CSIC-UAM-UC3M-UCM}

\email{andrei.jaikin@uam.es}

\begin{document}

\begin{abstract}
We develop a method to show that some  (abstract) groups can be  embedded into a free pro-$p$ group. In particular,  we show that  every finitely generated subgroup  of   a free $\Q$-group can be embedded into a  free pro-$p$ group for almost all primes $p$. This solves an  old problem raised by G. Baumslag: free $\Q$-groups are residually torsion-free nilpotent.

Nous d\'eveloppons une m\'ethode pour montrer que certains groupes (discrets) peuvent \^etre plong\'es dans un pro-$p$-groupe libre. Nous montrons en particulier  que tout sous-groupe de type fini d'un $\Q$-groupe libre peut \^etre plong\'e dans un pro-$p$-groupe libre pour presque tous les premiers $p$. Cela r\'esout un ancien probl\`eme soulev\'e par G. Baumslag: les $\Q$-groupes libres sont  r\'e'siduellement nilpotents sans torsion.
\end{abstract}

  \maketitle

 \section{Introduction} 
 A  group $G$ is called a {\bf $\Q$-group } if for any $n\in \N$ and $ g\in G$  there exists exactly one $h\in G$ satisfying $h^n=g$. These groups were introduced by G. Baumslag in \cite{Ba60} under the name of $\D$-groups. He observed that $\Q$-groups may be viewed as universal algebras, and as such they constitute
a variety. Every variety of algebras contains free algebras (in that variety).
In the variety of  $\Q$-groups we  call  such free algebras {\bf free $\Q$-groups}.  G. Baumslag dedicated several papers to the study of residual properties of free $\Q$-groups \cite{Ba65, Ba68, Ba10}. For example, in \cite{Ba65} he showed that a free $\Q$-group is residually periodic-by-soluble and locally residually finite-by-soluble. He wrote in \cite{Ba65} ``It is, of course, still possible that, locally, free $\D$-groups are, say, residually
finite $p$-groups'' or in \cite{Ba68} ``In particular it seems likely that free $\D$-groups are residually torsion-free nilpotent. However the complicated nature of free $\D$-groups makes it difficult to
substantiate such a remark.'' This conjecture   is part of two main collections of problems in group theory (\cite[Problem F12]{BMS} and \cite[Problem 13.39 (a),(c)] {Kou}), and in addition to mentioned works of Baumslag, it was  also studied in  \cite{Ch68, GMRS97}.
 In this paper we solve Baumslag's conjecture.
 
 \begin{teo} \label{teoBau} A free $\Q$-group is  residually torsion-free nilpotent.
 \end{teo}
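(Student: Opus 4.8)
The plan is to prove the stronger statement announced in the abstract --- every finitely generated subgroup of a free $\Q$-group embeds into a free pro-$p$ group for all but finitely many primes $p$ --- and to deduce Theorem~\ref{teoBau} from it. Two reductions come first. Since the property of being residually torsion-free nilpotent passes to subgroups, and since a free $\Q$-group on a set $X$ retracts onto the free $\Q$-subgroup on any finite subset $X_0\subseteq X$ (extend $x\mapsto x$ for $x\in X_0$ and $x\mapsto 1$ otherwise to a $\Q$-homomorphism), it is enough to treat $G=F^{\Q}(x_1,\dots,x_d)$ of finite rank. By Baumslag's construction of free $\Q$-groups via iterated adjunction of roots \cite{Ba60}, such a $G$ is a countable increasing union $G=\bigcup_{i\ge 0}G_i$ in which $G_0=F(x_1,\dots,x_d)$ is the ordinary free group and each $G_{i+1}$ is obtained from $G_i$ by adjoining an $n_i$-th root of an element $w_i\in G_i$ which --- after a harmless reorganisation of the construction --- may be assumed not to be a proper power; concretely $G_{i+1}=G_i *_{\langle w_i\rangle=\langle t_i^{n_i}\rangle}\langle t_i\rangle$, an amalgamated free product of $G_i$ with $\Z$ over infinite cyclic subgroups. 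Granting that every $G_i$ embeds into a free pro-$p$ group for almost all $p$, the theorem follows: a free pro-$p$ group $F$ --- and hence every subgroup of it --- is residually torsion-free nilpotent as an abstract group, since the quotients $F/C$, where $C$ ranges over the terms of the lower central series of the topological group $F$, are finitely generated torsion-free nilpotent pro-$p$ groups with trivial intersection of kernels. Thus each $G_i$ is residually torsion-free nilpotent, and, replacing any torsion-free nilpotent quotient by its Mal'cev completion, even residually a uniquely divisible torsion-free nilpotent group. Now, given $1\ne g\in G$, choose $i$ with $g\in G_i$ and a homomorphism $\varphi\colon G_i\to Q$ onto a uniquely divisible torsion-free nilpotent group with $\varphi(g)\ne 1$, and extend $\varphi$ over each subsequent amalgamation step by sending $t_j$ to the unique $n_j$-th root of $\varphi(w_j)$ in $Q$; the direct limit of these extensions is a homomorphism $G\to Q$ that does not kill $g$.

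It remains to prove, by induction on $i$, that $G_i$ is residually-$p$ (i.e.\ residually a finite $p$-group) with free pro-$p$ completion $\widehat{(G_i)}_p$ for all but finitely many $p$; this gives the required embedding $G_i\hookrightarrow\widehat{(G_i)}_p$. The case $i=0$ is classical. For the inductive step one fixes a prime $p$ lying outside the finite exceptional set for $G_i$, with $p\nmid n_i$ and outside one further finite set, and uses the key fact that for such $p$ the infinite cyclic subgroup $\langle w_i\rangle$ is closed in the pro-$p$ topology of $G_i$ and induces on itself, as a subgroup of $G_i$, the same topology that it does as $\langle t_i^{n_i}\rangle\le\langle t_i\rangle\cong\Z$. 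Granting this, the classical criterion for residual $p$-finiteness of amalgams over $\Z$ shows $G_{i+1}$ is residually-$p$ and identifies $\widehat{(G_{i+1})}_p$ with the pro-$p$ amalgam of $\widehat{(G_i)}_p$ with $\Z_p$ over the embedding $\Z_p\xrightarrow{\times n_i}\Z_p$; since $p\nmid n_i$ this embedding is an isomorphism, so the amalgam is just $\widehat{(G_i)}_p$, which is free pro-$p$ by the inductive hypothesis. Hence $G_{i+1}$ is residually-$p$ with free pro-$p$ completion, and the induction closes.

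The crux --- and the step where the machinery advertised above (mod-$p$ $L^2$-Betti numbers, L\"uck approximation, universal division rings of fractions) is really needed --- is exactly the displayed claim about $\langle w_i\rangle$ together with the assertion that $\widehat{(G_{i+1})}_p$ genuinely remains a free pro-$p$ group. The naive amalgam picture is not available a priori: one must rule out, for all but finitely many $p$, that $w_i$ becomes a proper $p$-th power in $\widehat{(G_i)}_p$ (or that $\langle w_i\rangle$ fails to be closed there), and there is no structural description of $G_i$ to lean on. The route I would take: reformulate ``$\widehat{H}_p$ is a free pro-$p$ group'' as the vanishing of $H^2(\widehat{H}_p;\F_p)$; bound this, via L\"uck approximation with $\F_p$-coefficients, by the growth rate of $\dim_{\F_p}H_2(N;\F_p)$ as $N$ runs through a chain of finite-index normal subgroups of $p$-power index with trivial intersection; and show that this rate --- namely the mod-$p$ second $L^2$-Betti number of $G_{i+1}$ --- vanishes, by computing $L^2$-homology over the Hughes-free (universal) division ring of fractions of $\Q[G_i]$, which exists because $G_i$ is inductively residually torsion-free nilpotent and hence locally indicable, through a Mayer--Vietoris argument across the amalgamation and the vanishing of the higher $L^2$-Betti numbers of $\Z$. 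The same division-ring computation controls the first $L^2$-Betti number, pinning down the rank of the completion and forcing the relevant cyclic subgroups to be $p$-adically undistorted for almost all $p$. So the overall architecture is: reduce to iterated $\Z$-amalgams over a free group; push ``embeds into a free pro-$p$ group'' through a single amalgamation step, using the universal division ring of fractions together with L\"uck approximation to control mod-$p$ homology in degrees $1$ and $2$; iterate; and bootstrap from the finitely generated pieces to $G$ by extending torsion-free nilpotent quotients along the unique divisibility of the Mal'cev completion. The genuinely hard part is the division-ring / L\"uck-approximation package that produces the mod-$p$ $L^2$-vanishing, and hence the freeness of the pro-$p$ completion, at the inductive step.
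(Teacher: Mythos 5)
Your outer architecture is the same as the paper's: reduce to a finitely generated free $\Q$-group, realize it as an increasing union of groups obtained from a free group by iterated root adjunction (the paper uses the Myasnikov--Remeslennikov description, Proposition \ref{MR}), embed each finite stage into a free pro-$p$ group, and pass to the union by extending torsion-free nilpotent quotients through their Mal'cev completions (this is exactly the paper's Proposition \ref{referee}). The problem is the inductive step, which is where all the content lies, and there your proposed mechanism does not work.

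The decisive flaw is the criterion you rely on: that vanishing of the second mod-$p$ $L^2$-Betti number (i.e.\ sublinear growth of $\dim_{\F_p}H_2(N;\F_p)$ along a chain of finite-index normal subgroups) forces $H^2$ of the pro-$p$ completion to vanish, hence the completion to be free pro-$p$. This implication is false. A surface group $\pi_1(\Sigma_g)$, $g\ge 2$, has $\dim_{\F_p}H_2(N;\F_p)=1$ for every finite-index subgroup $N$, so the normalized limit is $0$; yet its pro-$p$ completion is a Demushkin group with $H^2\ne 0$, hence not free. In general $H^2$ of the completion injects into $H^2(G;\F_p)$, but a single surviving relation is invisible to any normalized (per-index) count, so no L\"uck-approximation statement in degree $2$ can certify freeness of the completion. (The paper's own remark that surface groups of genus $>1$ embed into parafree groups but are \emph{not} SE($p$) is a warning that exactly this kind of degeneration must be excluded by hand.) The paper sidesteps the issue entirely: it never proves, or needs, that the pro-$p$ completion of $G_{i+1}$ is free. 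It fixes the ambient group $\mathbf F$ to be the pro-$p$ completion of the \emph{initial} free group once and for all, keeps each $G_i$ as a dense subgroup of that same $\mathbf F$, and runs the induction on a purely degree-one invariant, the ``strong embedding'' condition $\dim_{\F_p[G]}I_G=\beta_1^{\operatorname{mod}p}(G)+1=d(\mathbf F)$. The amalgam structure $G_{i+1}\cong G_i*_{A_i}B_i$ is then obtained not from a pro-$p$ amalgamation theorem but from Lewin's criterion $I_{H}^G\cap I_B^G=I_A^G$ in $\F_p[G]$, verified by showing that the module $(I_H^G\oplus I_B^G)/J$ is $\D_{\F_p[G]}$-torsion-free and has the same $\dim_{\F_p[G]}$ as its quotient $I_G$; the torsion-freeness ultimately rests on Proposition \ref{dffree} about $I_{\mathbf F}/I_{\mathbf Z}^{\mathbf F}$, and it is this statement---not any $H^2$ computation---that rules out $w_i$ degenerating (e.g.\ becoming a proper $p$-power) inside $\mathbf F$. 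So your plan would need to be rebuilt around a degree-one argument of this kind; as written, the crux step rests on a false implication.
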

The structure of a finitely generated subgroup of a free $\Q$-group was studied already in \cite{Ba60} (see also \cite[Section 8]{MR96} and Proposition \ref{MR}). It was shown that it is  the end result of repeatedly freely adjoining $n$th roots to a finitely generated  free group. 
The key point of our   proof of Theorem \ref{teoBau} is to show  that any finitely generated subgroup  of a free $\Q$-group can be embedded into a finitely generated free pro-$p$ group for some prime $p$. 
We actually  prove the following more precise result.
\begin{teo}\label{iteratedextension} Let $p$ be a prime.
Let $H_0$ be a finitely generated free group  and let $H_0\hookrightarrow \mathbf F$ be the canonical embedding of $H_0$ into its pro-$p$ completion $ \mathbf F$. Let $(H_{i})_{i\ge 0}$ be a sequence of subgroups of  $\mathbf F$ such that for $i\ge 0$,
\begin{enumerate}
\item $H_{i+1}=\langle H_i, B_i\rangle $, where   $B_{i}$ is a finitely generated  abelian subgroup of $\mathbf F$ and 
\item $A_i=H_i\cap B_i$ is  a maximal abelian subgroup of $H_{i}$.
\end{enumerate}
Then  for every $i\ge 0$, the canonical map $$H_{i}*_{A_{i}}B_{i}\to H_{i+1}$$ is an isomorphism,
\end{teo}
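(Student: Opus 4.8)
The plan is to prove the statement by induction on $i$, strengthening the conclusion at each stage to a statement about the group algebras over $\F_p$ and about the universal division ring of fractions of $\mathbf F$. Surjectivity of the canonical map $\phi_i\colon H_i*_{A_i}B_i\to H_{i+1}$ is immediate, since by definition $H_{i+1}$ is generated by the images of $H_i$ and $B_i$. For injectivity it is enough to prove that the induced ring homomorphism $\F_p[H_i*_{A_i}B_i]\to\F_p[[\mathbf F]]$ is injective: a group embeds into the units of its group algebra, so this forces $\phi_i$ to be injective, and $\F_p[H_{i+1}]\to\F_p[[\mathbf F]]$ is itself injective because the subgroup $H_{i+1}$ of $\mathbf F$ is residually (finite $p$), so that finitely many group elements can be separated in a finite $p$-quotient. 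Since $A_i$ is a common subgroup of $H_i$ and $B_i$, the group algebra of the amalgam is the ring coproduct $\F_p[H_i*_{A_i}B_i]=\F_p[H_i]*_{\F_p[A_i]}\F_p[B_i]$, with $\F_p[H_i]$ and $\F_p[B_i]$ free as modules over $\F_p[A_i]$.

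As $\F_p[[\mathbf F]]$ is a domain that embeds into its universal division ring of fractions $\mathcal D_{\mathbf F}$ --- whose existence and basic properties are established earlier in the paper --- it carries a Sylvester matrix rank function $\operatorname{rk}_{\mathbf F}$, and this pulls back along any ring map to a rank function on $\F_p[H]$ for each subgroup $H$ of $\mathbf F$. The inductive hypothesis I would carry is: for each $i$, the canonical map identifies $H_i$ with the iterated amalgam $H_0*_{A_0}B_0*_{A_1}\cdots*_{A_{i-1}}B_{i-1}$; the ring $\F_p[H_i]$ is a domain possessing a universal division ring of fractions $\mathcal U(\F_p[H_i])$, which is Hughes-free; and this $\mathcal U(\F_p[H_i])$ is realized inside $\mathcal D_{\mathbf F}$ as the division closure of $H_i$ (equivalently, the restriction of $\operatorname{rk}_{\mathbf F}$ to $\F_p[H_i]$ is the rank function of $\mathcal U(\F_p[H_i])$). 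For $i=0$ this reduces to the known description of $\mathcal U(\F_p[H_0])$ for the finitely generated free group $H_0$ --- the ``free field'' over $\F_p$ --- together with the fact that it sits in $\mathcal D_{\mathbf F}$, which holds because $\mathbf F$ is the pro-$p$ completion of $H_0$.

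For the inductive step, $A_i$ is finitely generated abelian, hence amenable, and $B_i\cong\Z^{r}$, so $\F_p[B_i]$ is a commutative Laurent polynomial domain whose field of fractions is its (Hughes-free) universal division ring of fractions, and likewise for $\F_p[A_i]$, with $\mathcal U(\F_p[A_i])$ sitting inside both $\mathcal U(\F_p[H_i])$ and $\mathcal U(\F_p[B_i])$. By the amalgamation results over an amenable edge group proved earlier --- using the inductive hypothesis that $\mathcal U(\F_p[H_i])$ is Hughes-free --- the coproduct $R:=\F_p[H_i*_{A_i}B_i]=\F_p[H_i]*_{\F_p[A_i]}\F_p[B_i]$ is again a domain, and its universal division ring of fractions is the amalgamated free product of division rings $\mathcal D:=\mathcal U(\F_p[H_i])*_{\mathcal U(\F_p[A_i])}\mathcal U(\F_p[B_i])$, which is again Hughes-free; in particular $R\hookrightarrow\mathcal D$. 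It therefore suffices to embed $\mathcal D$ into $\mathcal D_{\mathbf F}$ compatibly with the maps already defined on the two factors $\F_p[H_i]$ and $\F_p[B_i]$: then $R\to\F_p[[\mathbf F]]$ is injective, so $H_i*_{A_i}B_i\cong H_{i+1}$, and moreover the division closure of $H_{i+1}$ in $\mathcal D_{\mathbf F}$ equals $\mathcal D=\mathcal U(\F_p[H_{i+1}])$, re-establishing the inductive hypothesis at level $i+1$.

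The required embedding $\mathcal D\hookrightarrow\mathcal D_{\mathbf F}$ is where the real work lies, and the only point at which the hypothesis that $A_i$ is a \emph{maximal} abelian subgroup of $H_i$ is used. Inside $\mathcal D_{\mathbf F}$ we already have $\mathcal U(\F_p[H_i])$ as the division closure of $H_i$ (by the inductive hypothesis) and $\mathcal U(\F_p[B_i])$ as the division closure of $B_i$, both containing $\mathcal U(\F_p[A_i])$; by the universal property of the division ring amalgam, an embedding $\mathcal D\hookrightarrow\mathcal D_{\mathbf F}$ extending these exists exactly when the two division subrings are in ``free position'' over $\mathcal U(\F_p[A_i])$. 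To establish this one first notes that, since closed abelian subgroups of a free pro-$p$ group are procyclic, $\overline{B_i}\cap H_i$ is an abelian subgroup of $H_i$ containing $A_i$, whence $\overline{B_i}\cap H_i=A_i$ by maximality; were $A_i$ not maximal, uniqueness of roots in $\mathbf F$ would instead drag part of $B_i$ into $H_i$ and collapse the amalgam, so maximality is genuinely needed. From $\overline{B_i}\cap H_i=A_i$ one then deduces the free position using the structure of $\mathcal D_{\mathbf F}$ (for instance the filtration it inherits from the dimension series of $\mathbf F$) and the Mayer--Vietoris behaviour of Sylvester rank functions over amenable subgroups developed in the earlier sections. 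This last step is, I expect, the technical heart of the argument; the induction surrounding it is then formal.
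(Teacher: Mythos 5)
Your reduction of injectivity to the injectivity of the ring map $\F_p[H_i*_{A_i}B_i]\to \F_p[[\mathbf F]]$ is sound, and your observation that maximality of $A_i$ forces $\overline{B_i}\cap H_i=A_i$ is a genuine ingredient (it appears as Claim \ref{emb}). But the proposal has a real gap exactly where you admit the work lies: the ``free position'' of the division closures of $\F_p[H_i]$ and $\F_p[B_i]$ over that of $\F_p[A_i]$ inside $\D_{\F_p[[\mathbf F]]}$ is asserted, not proved. The ``amalgamation results over an amenable edge group'' and the ``Mayer--Vietoris behaviour of Sylvester rank functions'' you invoke are not developed in the paper and are not standard in this generality; moreover, the universal property of a division-ring coproduct yields specializations, not embeddings, so deducing $\mathcal D\hookrightarrow \D_{\F_p[[\mathbf F]]}$ from it is circular --- knowing the subring generated by the two division closures is the free amalgam is equivalent to what you are trying to prove. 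Finally, your inductive hypothesis (that the division closure of $\F_p[H_i]$ in $\D_{\F_p[[\mathbf F]]}$ is the Hughes-free universal division ring of fractions) holds automatically for \emph{every} finitely generated subgroup of $\mathbf F$ by Proposition \ref{divclousure}, so it carries no leverage for the inductive step.

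The paper closes this gap by a different mechanism. The inductive step (Theorem \ref{main}) uses Lewin's criterion (Proposition \ref{critamal}): the amalgam maps isomorphically onto $G=H_{i+1}$ if and only if $I_{H}^{G}\cap I_{B}^{G}=I_{A}^{G}$ in $\F_p[G]$. This is verified by a rank computation: the module $M=(I_{H}^{G}\oplus I_{B}^{G})/J$ surjects onto $I_G$; one shows $\dim_{\F_p[G]}M=\dim_{\F_p[G]}I_G=d(\mathbf F)$ using the hypothesis that $H\hookrightarrow\mathbf F$ is \emph{strong}, i.e. $\dim_{\F_p[H]}I_H=\beta_1^{\operatorname{mod}p}(H)+1=d(\mathbf F)$ --- a numerical condition propagated through the induction and entirely absent from your inductive hypothesis --- and then one shows $M$ is $\D_{\F_p[G]}$-torsion-free (Proposition \ref{dfpfree}), which forces the surjection $M\to I_G$ to be an isomorphism. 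The torsion-freeness rests on the key technical result that $I_{\mathbf F}/I_{\mathbf Z}^{\mathbf F}$ is $\D_{\F_p[[\mathbf F]]}$-torsion-free for $\mathbf Z$ maximal procyclic (Proposition \ref{dffree}), proved by a factorization argument in $\F_p[[\mathbf F]]$, while the dimension identities rest on the mod-$p$ L\"uck approximation of \cite{Jataylor19}. Nothing in your proposal substitutes for these two inputs, so as written the argument does not go through.
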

Theorem \ref{iteratedextension} is actually an application of the slightly more technical Theorem \ref{main}.

Let us make a few remarks about the groups $A_i$ and $B_i$.
It is relatively easy to describe  abelian subgroups of amalgamated products. In particular,  the conclusion of  the theorem  implies  that all  abelian subgroups of $H_i$ are finitely generated. Thus, an implicit hypothesis, which appears in the theorem,  that maximal abelian subgroups $A_i$ of $H_i$ are finitely generated, is automatically fulfilled.

A maximal abelian subgroup of $\mathbf F$  is isomorphic to the additive group of the ring of $p$-adic numbers $(\Z_p,+)$. Therefore, for any finitely generated (abstract) abelian subgroup $A$ of $\mathbf F$  and any finitely generated torsion-free abelian group $B$ which contains $A$ and such that $B/A$ has no $p$-torsion, it is posible to extend the embedding  $A\hookrightarrow \mathbf F$ to an embedding $B\hookrightarrow \mathbf F$. This extension is   unique if and only if   $B/A$ is finite.

Given a commutative ring $A$, we will introduce in Section \ref{proofs}  the notions of $A$-group and free $A$-group $F^A(X)$. For example, a free pro-$p$ group is an example of a $\Z_p$-group. 
We have the following  consequence of Theorem \ref{iteratedextension}.
\begin{cor}\label{Zpgroup}
 Let $F(X)$ be the free group on a finite free generating set $X$, let $\mathbf F$ be its pro-$p$ completion. Then the canonical homomorphism
$$\phi:F^{\Z_p}(X)\to \mathbf F$$ is injective.
\end{cor}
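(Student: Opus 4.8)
The plan is to derive Corollary~\ref{Zpgroup} from Theorem~\ref{iteratedextension} by realising $F^{\Z_p}(X)$ as a directed union of the iterated amalgams occurring there. Recall that $\mathbf F$ is a $\Z_p$-group: the closure of a cyclic subgroup is procyclic, so $g\mapsto g^{\lambda}$ is defined for all $g\in\mathbf F$, $\lambda\in\Z_p$, and $\phi$ is then the unique homomorphism of $\Z_p$-groups extending $X\hookrightarrow\mathbf F$. Since $X$ generates $F^{\Z_p}(X)$ as a $\Z_p$-group, every $w\in F^{\Z_p}(X)$ is obtained from finitely many elements of $X$ by finitely many multiplications, inversions and exponentiations $g\mapsto g^{\lambda}$, using only finitely many exponents $\lambda_1,\dots,\lambda_m\in\Z_p$. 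It therefore suffices to attach to each such $w$ a group $G$, an element $\tilde w\in G$, and homomorphisms $\rho\colon G\to F^{\Z_p}(X)$ and $\iota\colon G\to\mathbf F$ with $\rho(\tilde w)=w$, $\phi\circ\rho=\iota$, and $\iota$ injective: then $w\neq1$ forces $\tilde w\neq1$ (as $\rho$ is a homomorphism), hence $\phi(w)=\iota(\tilde w)\neq1$, and so $\ker\phi=1$.

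I would build $G$ as a finite iterated amalgam realised inside $\mathbf F$, following the list of operations that produce $w$. Put $H_0=F(X)$, embedded in $\mathbf F$ by the canonical map. Inductively, suppose $H_0\hookrightarrow\dots\hookrightarrow H_i\le\mathbf F$ has been constructed, each step being an amalgam as in Theorem~\ref{iteratedextension}; then $H_i$ is a finitely iterated such amalgam, and — by the description of abelian subgroups of amalgamated products, cf.\ the remarks after Theorem~\ref{iteratedextension} — all its abelian subgroups are finitely generated. Let $g_i\in H_i$ be the base element of the next exponentiation and let $A_i\le H_i$ be a maximal abelian subgroup containing it; it is finitely generated. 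In $\mathbf F$ set $B_i=\langle\phi(A_i),\phi(g_i)^{\lambda_{j(i)}}\rangle$; since $\phi(A_i)$ commutes with $\phi(g_i)$ it commutes with $\phi(g_i)^{\lambda_{j(i)}}\in\overline{\langle\phi(g_i)\rangle}$, so $B_i$ is abelian, and it is finitely generated. As $B_i$ is abelian and contains $A_i$, maximality forces $H_i\cap B_i=A_i$; hence $(H_j,B_j)_{j\le i}$ satisfies the hypotheses of Theorem~\ref{iteratedextension}, and the canonical map $H_i*_{A_i}B_i\to H_{i+1}:=\langle H_i,B_i\rangle$ is an isomorphism. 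After processing all $n$ operations, let $G:=H_0*_{A_0}B_0*_{A_1}\cdots*_{A_{n-1}}B_{n-1}$ be the corresponding abstract amalgam; Theorem~\ref{iteratedextension} yields an isomorphism $\iota\colon G\xrightarrow{\ \sim\ }H_n\le\mathbf F$, so $\iota$ is injective.

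For $\rho$ I would use on $H_0$ the structure map $F(X)\to F^{\Z_p}(X)$, and send the distinguished generator of $B_i$ (the one mapped by $\iota$ to $\phi(g_i)^{\lambda_{j(i)}}$) to $g_i^{\lambda_{j(i)}}\in F^{\Z_p}(X)$, where $g_i$ now denotes the element already built in $F^{\Z_p}(X)$. Because $A_i$ is abelian and contains $g_i$, the $\Z_p$-group axioms guarantee that $g_i^{\lambda_{j(i)}}$ commutes with the image of $A_i$, so the amalgam relations are respected and $\rho$ is well defined; tracking the operations shows that $\rho$ carries the element $\tilde w\in G$ spelled by the same word to $w$. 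Finally $\phi\circ\rho$ and $\iota$ agree on the generators of $G$ — on $X$ this is the definition of $\phi$, and on the $B_i$-generators it uses that $\phi$ is a $\Z_p$-group homomorphism, so $\phi(g_i^{\lambda_{j(i)}})=\phi(g_i)^{\lambda_{j(i)}}$, together with $\phi\circ\rho=\iota$ on the earlier stages — hence $\phi\circ\rho=\iota$. This supplies everything required, so $\phi$ is injective.

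The real obstacle is not this deduction but having in hand the structure theory of free $\Z_p$-groups that legitimises the bookkeeping above: that $F^{\Z_p}(X)$ is assembled from $X$ by iterated free adjunction of $\Z_p$-powers (the $\Z_p$-analogue of the picture recalled in Proposition~\ref{MR} for free $\Q$-groups), and — crucially — that a single element $w$ forces only finitely many adjunctions of \emph{finitely generated} abelian groups, so that one never needs the a priori infinitely generated amalgam $H*_{\Z}\Z_p$ and can instead invoke Theorem~\ref{iteratedextension} as stated. Once those facts are recorded (in Section~\ref{proofs}), the remaining points — finite generation of the $A_i$, the identity $H_i\cap B_i=A_i$, and the compatibility $\phi\circ\rho=\iota$ — are routine.
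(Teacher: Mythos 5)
Your overall strategy---reduce to Theorem \ref{iteratedextension} via a finite tower of amalgams along finitely generated abelian subgroups---is the paper's strategy, but you run it in the opposite direction, and the reversal is where the gap sits. You build the tower $H_0\le H_1\le\cdots$ inside $\mathbf F$ and then need a homomorphism $\rho\colon G\to F^{\Z_p}(X)$ out of the abstract amalgam. To define $\rho$ on $B_i=\langle A_i,g_i^{\lambda}\rangle$ you only verify the commutation relations, but $B_i$ lies inside the procyclic group $\overline{\langle g_i\rangle}\cong(\Z_p,+)$ and in general carries additional relations: writing the generators of $A_i$ and the new element as $p$-adic powers of a generator of $C_{\mathbf F}(g_i)$, every $\Z$-linear relation among those exponents is a relation of $B_i$ (already for $A_i=\langle g_i\rangle$ and $\lambda=r/s\in\Q\cap\Z_p$ one has $(g_i^{\lambda})^{s}=g_i^{r}$, so $B_i$ is cyclic rather than the ``free'' extension $\Z^2$). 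To see that the proposed images in $F^{\Z_p}(X)$ satisfy the same relations you need to know that the centralizer of $\rho(g_i)$ in $F^{\Z_p}(X)$ is isomorphic to $(\Z_p,+)$ and that $\phi$ restricts to an injection on it---which is precisely the Myasnikov--Remeslennikov structure theory you defer to at the end. So as written the proof is not complete: the ``routine bookkeeping'' conceals the one genuinely nontrivial input.

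That input is, however, already in the paper: Proposition \ref{MR} is stated for an arbitrary commutative ring $A$ with torsion-free additive group, hence applies to $A=\Z_p$; it is not only a statement about free $\Q$-groups. Once you invoke it, the argument simplifies to the paper's forward version, which avoids constructing $\rho$ altogether: a finitely generated subgroup $G$ of $F^{\Z_p}(X)$ sits in some $G_k$ of a tower $G_0=F(X_0)\le G_1\le\cdots\le G_k$ with $G_{i+1}\cong G_i*_{z_i=1}T_i$ for finitely generated $T_i\le(\Z_p,+)$; setting $H_i=\phi(G_i)$, $A_i=\langle\phi(z_i)\rangle$ and $B_i=\phi(z_i)^{T_i}$, one shows by induction that $\phi|_{G_i}$ is an isomorphism, the only points to check being $B_i\cong T_i$ (which uses $\phi(z_i)\neq1$, available from the inductive hypothesis, and the fact that $\overline{\langle\phi(z_i)\rangle}\cong\Z_p$) and $A_i=H_i\cap B_i$, after which Theorem \ref{iteratedextension} gives $H_{i+1}\cong H_i*_{A_i}B_i\cong G_{i+1}$. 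I recommend reorganizing your argument along these lines rather than patching the well-definedness of $\rho$.
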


Let $H$ be a group and
$A$ the centralizer of a non-trivial  element. Then the group $G = H*_A(A\times \Z^k)$ is said to be obtained from $H$  by {\bf extension of a centralizer}. 
A group is called an {\bf ICE}  group if it can be obtained from a free group using   iterated centralizer extensions. A group $G$ is a {\bf limit group} if and only if it is a finitely generated subgroup of an ICE group  (see \cite{KM06, CG05}). All centralizers of non-trivial elements of an ICE group are abelian.
Thus, Theorem \ref{teoBau} provides explicit realizations   of ICE groups (and so limit groups) as subgroups of a non-abelian free pro-$p$-groups (for this application we only need the case where all $B_i/A_i$ are torsion-free). Non-explicit realizations of limit groups as subgroups of a non-abelian free pro-$p$ group (in fact, as subgroups of every compact group containing a non-abelian free group) was obtained in \cite{BG10} (see also \cite{BGSS06}).

In Section \ref{proofs} we recall the definition of the $\Q$-completion of a group $G$. For example, a free $\Q$-group is the $\Q$-completion of a free group. Theorem \ref{iteratedextension} allows also to show that the $\Q$-completion of a limit group is residually torsion-free nilpotent.
\begin{teo}\label{Qlimit}
The $\Q$-completion of a limit group is residually torsion-free nilpotent.
\end{teo}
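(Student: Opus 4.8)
The plan is to reduce the statement to Theorem \ref{iteratedextension} by passing through finitely generated subgroups and exploiting the explicit tower structure of limit groups. Let $L$ be a limit group and let $\widehat L^{\Q}$ denote its $\Q$-completion. First I would recall that residual torsion-free nilpotence is a local property in the relevant sense: a group is residually torsion-free nilpotent if and only if every finitely generated subgroup embeds into a torsion-free nilpotent group, equivalently (by a standard argument) into a finitely generated free pro-$p$ group for some prime $p$ — indeed a subgroup of a free pro-$p$ group is residually (finite $p$)-nilpotent, hence residually torsion-free nilpotent. So it suffices to show that every finitely generated subgroup $G$ of $\widehat L^{\Q}$ embeds into a finitely generated free pro-$p$ group for some prime $p$.

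Next I would analyze the structure of such a $G$. Since $L$ is a limit group, it is a finitely generated subgroup of an ICE group $E$, i.e.\ $E$ is built from a finitely generated free group $H_0$ by a finite sequence of centralizer extensions $E_{i+1}=E_i*_{A_i}(A_i\times\Z^{k_i})$, where $A_i$ is the centralizer in $E_i$ of a non-trivial element. Applying the $\Q$-completion functor and using its compatibility with the relevant amalgamated product and direct product constructions (which I would need to spell out: $\Q$-completion commutes with free products amalgamated over isolated abelian subgroups, and turns $A_i\times\Z^{k_i}$ into $\widehat{A_i}^{\Q}\times\Q^{k_i}$), one sees that $\widehat E^{\Q}$ — and hence, up to taking a finitely generated subgroup, $\widehat L^{\Q}$ — is again obtained from a finitely generated free group by iterated amalgamated extensions of the type appearing in Theorem \ref{iteratedextension}, with the abelian vertex groups $B_i$ now allowed to be divisible (of the form $\Q^{k_i}$-by-$\Z$). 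A given finitely generated subgroup $G\le\widehat L^{\Q}$ involves only finitely many of the generators of these divisible pieces, so $G$ sits inside a subgroup built from $H_0$ by finitely many extensions in which each $B_i/A_i$ is a finitely generated torsion-free abelian group with no $p$-torsion for suitable $p$.

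Now I would fix a single prime $p$ large enough to work for all the (finitely many) abelian extensions simultaneously, embed $H_0$ into its pro-$p$ completion $\mathbf F$, and invoke the discussion in the excerpt: for each finitely generated abelian $A_i\hookrightarrow\mathbf F$ and each finitely generated torsion-free abelian $B_i\supseteq A_i$ with $B_i/A_i$ having no $p$-torsion, the embedding $A_i\hookrightarrow\mathbf F$ extends to an embedding $B_i\hookrightarrow\mathbf F$, and maximal abelian subgroups of $\mathbf F$ are copies of $(\Z_p,+)$, so the centralizer condition translates into the hypothesis that $A_i=H_i\cap B_i$ is maximal abelian in $H_i$. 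By Theorem \ref{iteratedextension}, the canonical maps $H_i*_{A_i}B_i\to H_{i+1}$ are isomorphisms, so at the top of the tower we obtain an embedding of the iterated amalgam into $\mathbf F$, and hence an embedding of $G$ into the finitely generated free pro-$p$ group $\mathbf F$. The main obstacle I anticipate is the first bookkeeping step: verifying carefully that the $\Q$-completion of a centralizer extension is itself an iterated extension of the precise shape required by Theorem \ref{iteratedextension} (in particular that centralizers stay maximal abelian after $\Q$-completion, and that the divisible abelian vertex groups can be approximated from inside by the finitely generated torsion-free abelian subgroups to which Theorem \ref{iteratedextension} applies), together with choosing $p$ uniformly; once that is in place, the rest is a direct appeal to Theorem \ref{iteratedextension} and the standard residual properties of free pro-$p$ groups.
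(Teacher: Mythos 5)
Your middle steps --- unwinding a finitely generated subgroup of the $\Q$-completion into a finite tower of amalgams over abelian subgroups, choosing one prime $p$ avoiding the finitely many torsion primes of the quotients $B_i/A_i$, and invoking Theorem \ref{iteratedextension} to embed the top of the tower into a free pro-$p$ group --- are sound and consistent with the machinery of the paper. The genuine gap is in your very first step: residual torsion-free nilpotence is \emph{not} a local property, and the ``standard argument'' you appeal to does not exist. Knowing that every finitely generated subgroup of $L^{\Q}$ embeds into a free pro-$p$ group (hence is residually torsion-free nilpotent) only produces, for a given $1\ne x$, homomorphisms to torsion-free nilpotent groups defined on finitely generated pieces; it gives no homomorphism defined on all of $L^{\Q}$ that survives $x$. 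For a counterexample to the general principle, take an ascending union $F^{(1)}\le F^{(2)}\le\cdots$ of free groups of rank $2$ with $F^{(n)}\le \gamma_2(F^{(n+1)})$ for every $n$: the union is locally free, hence locally residually torsion-free nilpotent, yet $F^{(1)}\le\bigcap_n\gamma_n\bigl(\bigcup_m F^{(m)}\bigr)$, so the union is not even residually nilpotent. This local/global distinction is exactly why Baumslag could prove free $\Q$-groups are \emph{locally} residually finite-by-soluble while the global residual property remained open for decades.

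The paper closes precisely this gap with Proposition \ref{referee}: writing the group as an ascending union $\bigcup_n G_n$ with $G_{n+1}\cong G_n*_{A_n}B_n$ and $B_n$ abelian, one starts from a homomorphism $\phi_m\colon G_m\to N$ onto a torsion-free nilpotent group with $\phi_m(x)\ne 1$, replaces $N$ by its Malcev $\Q$-completion so that $(\phi_n)_{|A_n}$ extends to $B_n$, and then extends $\phi$ step by step up the whole tower by the universal property of the amalgam. If you add this extension argument, your outline can be completed. Note also that the paper's actual proof of Theorem \ref{Qlimit} takes a shorter formal route: it identifies limit groups with finitely generated subgroups of $F^{\Z[t]}(X)$, uses Proposition \ref{QcompletionCSA} to embed the $\Q$-completion into $F^{\Q[t]}(X)$, and then quotes Corollary \ref{qttf}, whose proof ultimately rests on the same two ingredients you would need: Theorem \ref{iteratedextension} for the finitely generated pieces and Proposition \ref{referee} for the passage to the union.
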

A group $G$ is called {\bf parafree}  if   it is residually nilpotent and for some free group $F$, we have that for all $i$, $G/\gamma_i(G)\cong F/\gamma_i(F)$ where $\gamma_i(G)$ denotes the terms of the lower central series of $G$.  Baumslag introduced this family of groups and  produced many examples of them \cite{Ba67}.
In \cite{JM21} we apply the method of the proof of  Theorem \ref{iteratedextension} in order to construct new examples of finitely generated parafree groups.

Our proof of Theorem 1.2 is by induction on $i$. In the inductive step argument we start with the following situation. We have  a finitely generated subgroup $H$ of $\mathbf F$, a maximal abelian subgroup $A$ of $H$ and an abelian subgroup $B$ of $\mathbf F$ containing $A$. We want to show that the canonical homomorphism $H*_AB\to\langle H,B\rangle$ is an isomorphism. Unfortunately, we do not know how to show this statement  in such a generality, but we prove it in Theorem \ref{main} under an  additional assumption  that the embedding $H\hookrightarrow \mathbf F$ is strong (see Definition \ref{defsep}). Theorem \ref{main} is the main result of the paper. Its proof uses in an essential way the results of \cite{Jataylor19}, where we proved a particular case of the L\"uck approximation in positive characteristic.

 The paper is organized as follows. In Section \ref{prelim} we give basic preliminaries.
 The proof of  Theorem \ref {main} uses the theory of   mod-$p$ $L^2$-Betti numbers. In Section \ref{betti} we explain how to define them for subgroups  $G$ of a free pro-$p$ group. In Section \ref{tech} we introduce a technical notion of $\mathcal D$-torsion-free modules and show that some relevant $\F_p[G]$-modules are $\D_{\F_p[G]}$-torsion-free (see Proposition \ref{dfpfree}). In Section \ref{proofs} we prove Theorem \ref{main} and obtain  all the results mentioned in the introduction. In Section \ref{linearity}
 we discuss the following  two well-known problems  concerning linearity of free pro-$p$ groups and free $\Q$-groups:
 
 \begin{Question} \begin{enumerate}
 \
  \item (I. Kapovich) Is a free $\Q$-group linear?
 \item (A. Lubotzky)  Is a free pro-$p$ group linear?

 \end{enumerate}
\end{Question}

\section*{Acknowledgments}

I am very grateful to Pavel Zalesski who explained to me how   a surface group can be embedded  into a free pro-$p$ group. This was a starting point of Theorem \ref{iteratedextension}.
I would  like to thank Warren Dicks for providing the references needed in the proof of Proposition \ref{critamal}, Yago Antolin, Ashot Minasyan and a referee for explaining to me the proof of Theorem \ref{linear}, Emmanuel Breulliard for pointing out  the paper \cite{BG10} and Ismael Morales and anonymous  referees for  useful  suggestions and comments.  I am grateful to Aleksandr Krasilnikov for spotting a mistake in the previous version of the proof of Theorem \ref{MagnusA}.

This paper is partially supported by the grants   MTM2017-82690-P and PID2020-114032GB-I00 of the Ministry of Science and Innovation of Spain  and by the ICMAT Severo
Ochoa project  CEX2019-000904-S4.

 \section{Preliminaries}\label{prelim}  
\subsection{$R$-rings}
All rings in this paper are associative   and  have the identity element.  All   ring homomorphisms send  the identity   to the identity. We denote the invertible elements of  a ring $R$ by $R^*$. An $R$-module   means a left $R$-module.
By an {\bf $R$-ring} we understand a  ring homomorphism $\varphi: R\to S$. We will often refer to $S$ as $R$-ring and omit the homomorphism $\varphi$ if $\varphi$ is clear from the context.
Two $R$-rings $\varphi_1:R\to S_1$ and $\varphi_2:R\to S_2$ are said to be {\bf isomorphic} if there exists a ring isomorphism $\alpha: S_1\to S_2$ such that $\alpha\circ \varphi_1=\varphi_2$.

If $Y=\{y_i\colon i\in I\}$, we denote by   $A\langle \! \langle  Y\rangle \! \rangle $ the ring of   of formal
power series with coefficients in $A$ on the non-commuting  indeterminates $Y$. 
\subsection{Left ideals in  group algebras}

Let $G$ be a group and $k$ a commutative ring. We denote by $I_G$ the augmentation ideal of $k[G]$. If $H$ is a subgroup of $G$ we denote by $I_H^G$ the left ideal of $k[G]$ generated by $I_H$. The following lemma gives an alternative description of the $k[G]$-module $I_H^G$.

\begin{lem} \label{isomdiscr} Let $H\le T$ be      subgroups of $G$. Then the following holds.
\begin{enumerate}
\item[(a)] The canonical map $$k[G]\otimes_{k[H]} I_H\to I_H^G$$ sending  $a\otimes b$ to $ab$, is an isomorphism of $k[G]$-modules.

\item [(b)] The canonical map  $k[G]\otimes_{k[T]} (I_T/I_H^T) \to I_T^G/I_H^G $, sending $a\otimes (b+I_H^T)$ to $ab+I_H^G$, is an isomorphism of $k[G]$-modules.

\end{enumerate}

\end{lem}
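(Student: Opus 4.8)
The plan is to prove both parts by reducing to standard facts about relative bar/augmentation modules, using the flatness of $k[G]$ over $k[T]$ and over $k[H]$ (which holds because $k[G]$ is free as a right $k[T]$-module, a basis being any set of left coset representatives of $T$ in $G$).

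\medskip

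For part (a), I would first recall the classical short exact sequence of $k[H]$-modules
\[
0\to I_H\to k[H]\xrightarrow{\ \varepsilon\ } k\to 0,
\]
where $\varepsilon$ is the augmentation. Tensoring with the free (hence flat) right $k[H]$-module $k[G]$ on the left yields an exact sequence
\[
0\to k[G]\otimes_{k[H]}I_H\to k[G]\otimes_{k[H]}k[H]\to k[G]\otimes_{k[H]}k\to 0.
\]
Now $k[G]\otimes_{k[H]}k[H]\cong k[G]$ canonically, and under this identification the middle map is the natural projection $k[G]\to k[G]\otimes_{k[H]}k\cong k[H\backslash G]$ (the permutation module on right cosets), whose kernel is visibly the left ideal generated by $\{h-1: h\in H\}$, i.e. $I_H^G$. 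Chasing the identifications shows that the leftmost term maps isomorphically onto this kernel via $a\otimes b\mapsto ab$, which is exactly the claimed map. The only thing to check carefully is that the composite identification sends $a\otimes b$ to $ab$, which is immediate from the definition of the tensor product over $k[H]$ and the fact that $I_H\subseteq k[H]$.

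\medskip

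For part (b), I would run the analogous argument one level up, starting from the short exact sequence of $k[T]$-modules
\[
0\to I_H^T\to I_T\to I_T/I_H^T\to 0
\]
(note $I_H^T\subseteq I_T$ since $H\le T$). Tensoring on the left with the flat right $k[T]$-module $k[G]$ gives exactness of
\[
k[G]\otimes_{k[T]}I_H^T\to k[G]\otimes_{k[T]}I_T\to k[G]\otimes_{k[T]}(I_T/I_H^T)\to 0.
\]
By part (a) applied with the pair $H\le T$ (so $k[G]\otimes_{k[T]}I_T\cong I_T^G$ and $k[G]\otimes_{k[T]}I_H^T$ has image $I_H^G$ inside $I_T^G$ — here one uses that $I_H^T$, generated as a left $k[T]$-ideal by $I_H$, satisfies $k[T]\otimes_{k[H]}I_H\cong I_H^T$, again by (a) for $H\le T$ inside $T$, and then $k[G]\otimes_{k[T]}k[T]\otimes_{k[H]}I_H\cong k[G]\otimes_{k[H]}I_H$ maps onto $I_H^G$), the first map has image $I_H^G\subseteq I_T^G$. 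Hence the cokernel $I_T^G/I_H^G$ is identified with $k[G]\otimes_{k[T]}(I_T/I_H^T)$, and tracking the isomorphisms shows the induced map is $a\otimes(b+I_H^T)\mapsto ab+I_H^G$, as claimed. (Alternatively, one can apply the right-exact functor $k[G]\otimes_{k[T]}-$ directly to $0\to I_H^T\to I_T\to I_T/I_H^T\to 0$ and invoke part (a) twice.)

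\medskip

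The one genuine obstacle is bookkeeping: making sure that all the canonical isomorphisms ($k[G]\otimes_{k[H]}k[H]\cong k[G]$, associativity of tensor products, $k[G]\otimes_{k[T]}k[T]\otimes_{k[H]}I_H\cong k[G]\otimes_{k[H]}I_H$) are compatible so that the final explicit formula for the map really is $a\otimes b\mapsto ab$ rather than some twist of it. This is routine but must be done with care; everything else is a formal consequence of flatness of $k[G]$ as a right module over the relevant subgroup algebras.
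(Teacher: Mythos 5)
Your proposal is correct and follows essentially the same route as the paper: tensor the augmentation sequence $0\to I_H\to k[H]\to k\to 0$ with the free (hence flat) module $k[G]$ for part (a), and tensor $0\to I_H^T\to I_T\to I_T/I_H^T\to 0$ with $k[G]$ over $k[T]$, invoking part (a) to identify the first two terms, for part (b). The bookkeeping of canonical identifications you flag is indeed the only point requiring care, and the paper handles it in the same implicit way.
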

\begin{proof}
(a) Consider an exact sequence
$$0\to I_H\to k[H]\to k \to 0.$$
The freeness of $k[G]$ as  $k[H]$-module implies that the sequence 
$$0\to k[G] \otimes_{k[H]} I_H\xrightarrow{\alpha}k[G]\xrightarrow{\beta} k[G] \otimes_{k[H]} k\to 0$$
 is also exact.  Here $\alpha$  sends $a\otimes b$ to $ab$ and $\beta$ sends $a$ to $a\otimes 1$. Thus, $\alpha$ establishes an isomorphisms of $k[G]$-modules
between $k[G]\otimes_{k[H]} I_H$ and $\ker \beta= I_H^G$.     This proves the first claim of the lemma.
 
 (b) Consider now the exact sequence
$$0\to I_H^T\to I_T\to I_T/I_H^T\to 0.$$ Applying $k[G] \otimes_{k[T]}$ and taking   again into account that $ k[G]$ is a free   $k[T]$-module, we obtain the exact sequence

$$0\to I_H^G\to I_T^G\to k[G] \otimes_{k[T]}( I_T/I_H^T)\to 0.$$ 
 This proves the second claim.
 \end{proof}
 
 \subsection{Profinite modules over pro-$p$ groups}

In this paper the letters $\mathbf F$, $\mathbf G$, $\mathbf H$, etc. will denote    pro-$p$ groups. When we speak about pro-$p$ groups, the finite generation,  the finite presentation, the freeness etc.  will always be considered in the category of pro-$p$ groups. For example, $d(\mathbf G)$ denotes the minimal number of topological generators of $\mathbf G$.

Almost all pro-$p$ groups that we consider are free pro-$p$ groups. Recall that a closed subgroup of a free pro-$p$ group is also free pro-$p$ (\cite[Corollary 7.7.5]{RZ10}). As a consequence we obtain the following result which we will  use often in this paper.

\begin{lem} \label{naxab}
Every maximal abelian subgroup of a non-trivial free pro-$p$ group is isomorphic to $(\Z_p,+)$.
\end{lem}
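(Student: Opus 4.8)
The plan is to prove Lemma~\ref{naxab}, which states that every maximal abelian subgroup of a non-trivial free pro-$p$ group is isomorphic to $(\Z_p,+)$.

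First I would recall the two structural facts we are permitted to use: every closed subgroup of a free pro-$p$ group is itself free pro-$p$ (\cite[Corollary 7.7.5]{RZ10}), and, more elementarily, that a free pro-$p$ group of rank $\ge 2$ is non-abelian while a free pro-$p$ group of rank $1$ is exactly $\Z_p$. So let $\mathbf F$ be a non-trivial free pro-$p$ group and let $M$ be a maximal abelian subgroup. The first point to settle is that $M$ is \emph{closed}. This is the key step and, I expect, the main obstacle if one wants to be careful: one argues that the closure $\overline{M}$ of an abelian subgroup is again abelian, because the commutator map $\mathbf F\times\mathbf F\to\mathbf F$ is continuous and vanishes on the dense subset $M\times M$ of $\overline{M}\times\overline{M}$, hence vanishes on $\overline{M}\times\overline{M}$. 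By maximality $M=\overline{M}$, so $M$ is closed.

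Next, since $M$ is a closed subgroup of the free pro-$p$ group $\mathbf F$, by \cite[Corollary 7.7.5]{RZ10} it is itself a free pro-$p$ group. A free pro-$p$ group is abelian if and only if it has rank $0$ or $1$: rank $0$ gives the trivial group, and rank $1$ gives $\Z_p$; any free pro-$p$ group of rank $\ge 2$ contains a free pro-$p$ subgroup of rank $2$ on two of its free generators, which is non-abelian. Since $M$ is non-trivial --- indeed $\mathbf F$ is non-trivial and torsion-free (as a free pro-$p$ group it is torsion-free), so picking any $1\ne g\in\mathbf F$, the closure $\overline{\langle g\rangle}\cong\Z_p$ is a non-trivial abelian subgroup, hence $M$ is non-trivial by maximality --- we conclude that $M$ has rank exactly $1$, i.e. $M\cong(\Z_p,+)$.

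The only subtlety beyond the closedness argument is making sure the maximal abelian subgroup is non-trivial, which follows since any procyclic subgroup generated by a non-identity element is a non-trivial abelian subgroup and so any maximal abelian subgroup containing it is also non-trivial. With these observations in place the proof is complete; no computation is required, only the cited subgroup theorem and the elementary classification of abelian free pro-$p$ groups.
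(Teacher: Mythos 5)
Your proof is correct and follows the same route the paper intends: the paper states Lemma \ref{naxab} without proof as a direct consequence of the fact that closed subgroups of free pro-$p$ groups are free pro-$p$ (\cite[Corollary 7.7.5]{RZ10}). Your additional care in checking that a maximal abelian subgroup is automatically closed (via continuity of the commutator map) and non-trivial is a correct and welcome filling-in of details the paper leaves implicit.
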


Let $\mathbf G$ be a  pro-$p$ group. We denote by $\F_p[[\mathbf G]]$ the inverse limit of $\F_p[\mathbf G/\mathbf U]$, where the limit is
   taken over     all open normal subgroups $\mathbf U$ of $\mathbf G$.  $\F_p[[\mathbf G]]$ is called the {\bf   completed group algebra of $\mathbf G$ over $\F_p$}. In the case where $\mathbf G$ is a free pro-$p$ group we have the following useful description of  $\F_p[[\mathbf G]]$.
   \begin{pro}\cite[Section II, Proposition 3.1.4]{La65} \label{Lazard}
   Let $\mathbf F$ be a finitely generated free pro-$p$ group freely generated by $x_1,\ldots, x_d$. Then the continuous $\F_p$-algebra homomorphism
   $$\F_p\langle \!\langle y_1,\ldots, y_d\rangle \!\rangle \to \F_p[[\mathbf F]]$$ that sends $y_i$ to $ x_1-1$  (for $1\le i\le d$) is an isomorphism.
   \end{pro}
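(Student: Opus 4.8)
The plan is to prove that the displayed homomorphism, which I will denote $\Phi\colon\F_p\langle\!\langle y_1,\ldots,y_d\rangle\!\rangle\to\F_p[[\mathbf F]]$, $y_i\mapsto x_i-1$, is an isomorphism by realizing its source and its target as representing objects of one and the same functor on the category $\mathcal C$ of finite local $\F_p$-algebras whose residue field is $\F_p$. The map is well defined and continuous because for every open normal $\mathbf U\trianglelefteq\mathbf F$ the group $\mathbf F/\mathbf U$ is a finite $p$-group, so $\F_p[\mathbf F/\mathbf U]$ is local with nilpotent maximal (= augmentation) ideal; hence $y_i\mapsto x_i\mathbf U-1$ sends $\mathfrak m=(y_1,\ldots,y_d)$ into a nilpotent ideal, factors through every truncation $\F_p\langle Y\rangle/\mathfrak m^{n}$, and in the inverse limit yields $\Phi$. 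Moreover $\F_p\langle\!\langle Y\rangle\!\rangle=\varprojlim_n\F_p\langle\!\langle Y\rangle\!\rangle/\mathfrak m^{n}$ and $\F_p[[\mathbf F]]=\varprojlim_{\mathbf U}\F_p[\mathbf F/\mathbf U]$ present both rings as pro-objects of $\mathcal C$ (each $\F_p\langle\!\langle Y\rangle\!\rangle/\mathfrak m^n$ and each $\F_p[\mathbf F/\mathbf U]$ lies in $\mathcal C$, and the transition maps are surjective $\F_p$-algebra homomorphisms).

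Next I would compute morphisms. If $S\in\mathcal C$, an $\F_p$-algebra homomorphism $\F_p\langle\!\langle Y\rangle\!\rangle/\mathfrak m^{n}\to S$ is exactly a choice of images for the $y_i$ inside the nilpotent radical $J(S)$ (the constraint $J(S)^n=0$ is automatic once $n$ exceeds the nilpotency index), so $\Hom_{\mathrm{Pro}(\mathcal C)}(\F_p\langle\!\langle Y\rangle\!\rangle,S)=\varinjlim_n\Hom_{\mathcal C}(\F_p\langle\!\langle Y\rangle\!\rangle/\mathfrak m^n,S)=J(S)^{d}$. On the other side an $\F_p$-algebra homomorphism $\F_p[\mathbf F/\mathbf U]\to S$ is a group homomorphism $\mathbf F/\mathbf U\to S^{*}$; since $\mathbf F$ is pro-$p$ and $S$ is local with residue field $\F_p$, the image is a finite $p$-subgroup of $S^{*}$ and hence sits inside the unique Sylow $p$-subgroup $1+J(S)$ of $S^{*}$, so using that $\mathbf F$ is free pro-$p$ of rank $d$ we get $\Hom_{\mathrm{Pro}(\mathcal C)}(\F_p[[\mathbf F]],S)=\Hom_{\mathrm{cont}}(\mathbf F,1+J(S))=(1+J(S))^{d}$. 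The bijection $u\mapsto u-1$ turns $(1+J(S))^{d}$ into $J(S)^{d}$ naturally in $S\in\mathcal C$, so $\F_p\langle\!\langle Y\rangle\!\rangle$ and $\F_p[[\mathbf F]]$ represent the same functor $S\mapsto J(S)^{d}$; and $\Phi$ carries the universal element $(y_1,\ldots,y_d)$ of the first to $(x_1-1,\ldots,x_d-1)$, the universal element of the second. Since the Yoneda embedding of $\mathrm{Pro}(\mathcal C)$ into $\mathrm{Fun}(\mathcal C,\mathbf{Set})^{\mathrm{op}}$ is fully faithful, $\Phi$ is therefore an isomorphism of (topological) $\F_p$-algebras.

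The substantive inputs are two standard ring-theoretic facts — that $\F_p[G]$ is local with nilpotent augmentation ideal for $G$ a finite $p$-group, and that $1+J(S)$ is the full $p$-part of $S^{*}$ for $S\in\mathcal C$ — together with the observation that a continuous homomorphism out of $\F_p[[\mathbf F]]$ never detects anything of $\mathbf F$ beyond the pro-$p$ group $1+J(S)$; this is what forces the two $\Hom$-functors to agree, and I would regard it as the heart of the matter. A more hands-on alternative avoids categories: first check that $\Phi$ is onto — its image is closed by compactness and contains the dense image of the abstract group algebra $\F_p[F]$, $F=\langle x_1,\ldots,x_d\rangle$, because $x_i^{-1}=\sum_{k\ge 0}\bigl(-(x_i-1)\bigr)^{k}$ lies in it — and then prove injectivity by comparing the associated graded rings for the $\mathfrak m$-adic filtration and the augmentation filtration, using Magnus's theorem $\gr_{I_F}\F_p[F]\cong\F_p\langle Y\rangle$ and the fact that the augmentation filtration of $\F_p[F]$ coincides with the one induced from $\F_p[[\mathbf F]]$; in that route the filtration compatibility (a dimension-subgroup statement) is the main obstacle.
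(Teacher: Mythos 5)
Your argument is correct, but note that the paper offers no proof of this proposition at all: it is quoted verbatim from Lazard, so the comparison is with Lazard's original argument (and the standard textbook treatments), which proceed along the lines of the ``hands-on alternative'' you sketch at the end --- surjectivity from density of $\F_p[F]$ plus compactness, and injectivity by comparing the $\mathfrak m$-adic and augmentation filtrations via Magnus's theorem, the dimension-subgroup-type compatibility of the two filtrations being the real content. Your main route is genuinely different and, in my view, cleaner: you exhibit both $\F_p\langle\!\langle Y\rangle\!\rangle=\varprojlim_n \F_p\langle Y\rangle/\mathfrak m^n$ and $\F_p[[\mathbf F]]=\varprojlim_{\mathbf U}\F_p[\mathbf F/\mathbf U]$ as pro-objects of the category of finite local $\F_p$-algebras with residue field $\F_p$, compute that both corepresent $S\mapsto J(S)^d$ (using, respectively, that algebra maps out of the truncated free algebra must send the $y_i$ into the nilradical, and that a homomorphism from the $p$-group $\mathbf F/\mathbf U$ into $S^*$ lands in the Sylow $p$-subgroup $1+J(S)$ together with freeness of $\mathbf F$), and observe that $\Phi$ matches the universal elements, so fully faithfulness of the pro-Yoneda embedding forces $\Phi$ to be an isomorphism. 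All the ingredients check out, including the two ring-theoretic facts you isolate; the only points worth spelling out in a written version are that both inverse systems have surjective transition maps (so that morphisms of pro-objects agree with continuous homomorphisms of the limits, and an isomorphism in $\mathrm{Pro}(\mathcal C)$ is an isomorphism of topological rings) and that every continuous algebra map $\F_p[[\mathbf F]]\to S$ with $S$ finite factors through some $\F_p[\mathbf F/\mathbf U]$, which is exactly how $\F_p[[\mathbf F]]$ is defined in the paper. What the functorial proof buys is that it isolates the universal property (``$\F_p[[\mathbf F]]$ is the free complete local $\F_p$-algebra on $d$ topologically nilpotent generators'') and avoids the graded/filtration comparison entirely; what the classical proof buys is that it stays inside elementary ring theory and yields the Magnus embedding of the discrete free group as a byproduct.
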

   
   A {\bf discrete $\F_p[[\mathbf G]]$-module} is an $\F_p[[\mathbf G]]$-module $M$ such that for any $m\in M$, $$\Ann_{\F_p[[\mathbf G]]}(m):=\{a\in \F_p[[\mathbf G]]\colon am=0\}$$ is open in $\F_p[[\mathbf G]]$. 
   
     A {\bf profinite $\F_p[[\mathbf G]]$-module} is an inverse limit of finite discrete $\F_p[[\mathbf G]]$-modules.
    \begin{lem}\label{continuous} 
    Let $\mathbf G$ be a  pro-$p$ group.
   Let $\alpha: M\to N$ be a homomorphism of profinite $\F_p[[\mathbf G]]$-modules. If $M$ is finitely generated as an $\F_p[[\mathbf G]]$-module, then $\alpha$ is continuous.
   \end{lem}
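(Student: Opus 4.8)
The plan is to reduce to the case where $M$ is a free module of finite rank and then invoke the universal property of the completed group algebra together with the compactness of the target. First I would choose a finite generating set $m_1,\dots,m_n$ of $M$ as an $\F_p[[\mathbf G]]$-module and consider the surjection $\pi:\F_p[[\mathbf G]]^n\twoheadrightarrow M$ sending the $i$-th standard basis vector to $m_i$. Since $N$ and $M$ carry their profinite topologies and $\pi$ is a continuous open surjection of profinite modules (it is a continuous homomorphism of profinite groups, hence open by compactness), it suffices to prove that $\alpha\circ\pi:\F_p[[\mathbf G]]^n\to N$ is continuous; continuity of $\alpha$ then follows because $\pi$ is a quotient map. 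Thus we are reduced to the free case $M=\F_p[[\mathbf G]]^n$, and in fact, treating each coordinate separately, to $M=\F_p[[\mathbf G]]$.

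So let $\beta:\F_p[[\mathbf G]]\to N$ be an arbitrary $\F_p[[\mathbf G]]$-module homomorphism, and set $v=\beta(1)\in N$. Then $\beta(a)=a\cdot v$ for all $a\in\F_p[[\mathbf G]]$, i.e. $\beta$ is the orbit map $a\mapsto a\cdot v$ of the element $v$. The next step is to observe that this orbit map is continuous: the action $\F_p[[\mathbf G]]\times N\to N$ is by definition continuous (this is part of what it means for $N$ to be a profinite, equivalently topological, $\F_p[[\mathbf G]]$-module, being an inverse limit of finite discrete modules), and $\beta$ is the composition of $a\mapsto(a,v)$ with this action map, hence continuous. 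Concretely, for an open submodule $N'\le N$, the preimage $\beta^{-1}(N')=\{a: a\cdot v\in N'\}$ is the annihilator-type set $\{a\in\F_p[[\mathbf G]]: (a+ \ldots)\}$ — more precisely, writing $N=\varprojlim N/N_j$ over open submodules $N_j$, the class of $v$ in the finite module $N/N_j$ has open annihilator in $\F_p[[\mathbf G]]$ (a continuous finite module over the profinite ring $\F_p[[\mathbf G]]$ is a discrete module in the sense defined just above), and $\beta^{-1}(N_j)$ contains that open annihilator, hence is open. Taking the $N_j$ cofinal shows $\beta$ is continuous.

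Assembling: $\alpha\circ\pi$ is continuous because on each of the $n$ free coordinates it is an orbit map as above, and finite products of continuous maps into a topological module are continuous; since $\pi$ is a quotient map of profinite modules, $\alpha$ is continuous. I do not anticipate a serious obstacle here; the one point requiring a little care is the reduction step, namely checking that $\pi$ is genuinely a quotient map in the topological sense, which is where compactness of $\F_p[[\mathbf G]]^n$ (so that the continuous surjection $\pi$ is automatically closed, hence open onto its image) is used. Everything else is a direct unwinding of the definitions of discrete and profinite $\F_p[[\mathbf G]]$-module given above.
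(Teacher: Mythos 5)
Your proof is correct, but it takes a genuinely different route from the paper's. You reduce on the source side: present $M$ as a topological quotient of a free module $\F_p[[\mathbf G]]^n$ via the generator surjection $\pi$, observe that $\alpha\circ\pi$ is a finite sum of orbit maps $a\mapsto a\cdot\alpha(m_i)$, which are continuous because the action $\F_p[[\mathbf G]]\times N\to N$ on a profinite module is continuous, and then descend along $\pi$ using that a continuous surjection of compact Hausdorff spaces is closed, hence a topological quotient map. The paper instead reduces on the target side: it assumes without loss of generality that $N$ is a finite discrete module, sets $I=\Ann_{\F_p[[\mathbf G]]}(N)$, an open (hence closed) ideal, and shows that $IM=\sum_{i=1}^s Im_i$ is a closed submodule of finite index, hence open, and contained in $\ker\alpha$. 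Both arguments hinge on the same input --- continuity of the multiplication map on a profinite module --- but yours packages it as ``orbit maps are continuous'' while the paper packages it as ``$Im_i$ is the continuous image of a compact set, hence closed.'' The paper's version is slightly more self-contained (no appeal to quotient topologies); yours isolates reusable general facts and avoids the reduction to finite $N$. One presentational point: the continuity of $\pi$ itself, which you assert at the start of the reduction, is justified only by the orbit-map observation you give later (applied to the action on $M$ rather than on $N$), so as written the reduction quietly presupposes what is proved afterwards; reordering fixes this.
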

   \begin{rem}
   This lemma resembles  a well-known result of Nikolov and Segal \cite{NS07} that says that every  homomorphism from a finitely generated profinite group to a profinite group is always continuous. This is   equivalent to  that every subgroup of finite index in a finitely generated profinite group is open. In the case of $\F_p[[\mathbf G]]$-modules, the situation is different:  it is not always true that every left ideal of $\F_p[[\mathbf G]]$ of finite index is open.
   \end{rem}
   \begin{proof} Without loss of generality we may  assume that $N$ is a finite discrete $\F_p[[\mathbf G]]$-module and we have to show that $\ker \alpha $ is open. 
   
   We put  $I=\Ann_{\F_p[[G]]}=\{a\in \F_p[[\mathbf G]]\colon aN=\{0\}\}$. Then $I$ is an open, and so also closed,  ideal of $\F_p[[\mathbf G]]$. Let $IM$ be the submodule of $M$ generated by $\{a\cdot m\colon a\in I, m\in M\}$. Since $M$ is finitely generated we can write $$M=\sum_{i=1}^s \F_p[[\mathbf G]]m_i.$$ Therefore,  $IM=\sum_{i=1}^s Im_i$.
   
   From the definition of a profinite $\F_p[[\mathbf G]]$-module it follows that the multiplicative map $\F_p[[\bf G]]\times M\to M$ is continuous. Hence, since $I$ is closed in $\F_p[[\mathbf G]]$,
$IM=\sum_{i=1}^s Im_i$ is a closed submodule of $M$. But $IM$ is also of finite index. Hence, it  is open in $M$. Since $IM\le \ker \alpha$, $\ker \alpha$ is also open.   
   \end{proof}

   If $M=\varprojlim_{i\in I} M_i$ and $N=\varprojlim_{j\in J} N_i$ are right and left, respectively,  profinite $\F_p[[\mathbf G]]$-modules ($M_i$ and  $N_j$ are finite discrete $\F_p[[\mathbf G]]$-modules), then the profinite tensor product is denoted by $\widehat \otimes$ and it is defined as the inverse limit of $M_i\otimes_{\F_p[[\mathbf G]]}N_j$.
\begin{lem}\label{profinitetensor}
Let 
 $\mathbf H$ be  a closed subgroup of $\mathbf G$ and let $M$ be a finitely presented $\F_p[[\mathbf H]]$-module. Then $M$ is a profinite module and  the canonical map $$\gamma:\F_p[[\mathbf G]]\otimes_{\F_p[[\mathbf H]]}M\to \F_p[[\mathbf G]]\widehat \otimes_{\F_p[[\mathbf H]]} M$$ is an isomorphism of $\F_p[[\mathbf G]]$-modules.
 \end{lem}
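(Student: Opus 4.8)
The plan is to pick a finite presentation of $M$ and reduce the whole statement to the (essentially tautological) case $M=\F_p[[\mathbf H]]$, exploiting that both the ordinary and the completed tensor product are right exact.

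First I would check that $M$ is a profinite $\F_p[[\mathbf H]]$-module. Fix an exact sequence $\F_p[[\mathbf H]]^{m}\xrightarrow{f}\F_p[[\mathbf H]]^{n}\to M\to 0$. Since $\F_p[[\mathbf H]]^{m}$ is finitely generated, Lemma \ref{continuous} (applied to the pro-$p$ group $\mathbf H$) shows that $f$ is continuous, so $\im f$ is compact, hence closed in the profinite module $\F_p[[\mathbf H]]^{n}$. Therefore $M\cong \F_p[[\mathbf H]]^{n}/\im f$ is a quotient of a profinite module by a closed submodule, and so is itself profinite.

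Next I would record three facts about $\widehat\otimes$ over profinite rings: (i) $\F_p[[\mathbf G]]\,\widehat\otimes_{\F_p[[\mathbf H]]}\F_p[[\mathbf H]]\cong\F_p[[\mathbf G]]$ canonically, because writing $\F_p[[\mathbf G]]=\varprojlim_{\mathbf U}\F_p[\mathbf G/\mathbf U]$ one has $\F_p[\mathbf G/\mathbf U]\otimes_{\F_p[[\mathbf H]]}(\F_p[[\mathbf H]]/J)=\F_p[\mathbf G/\mathbf U]/J\,\F_p[\mathbf G/\mathbf U]=\F_p[\mathbf G/\mathbf U]$ for every ideal $J$ small enough to annihilate the finite module $\F_p[\mathbf G/\mathbf U]$; (ii) $\widehat\otimes$ commutes with finite direct sums; and (iii) $\F_p[[\mathbf G]]\,\widehat\otimes_{\F_p[[\mathbf H]]}-$ is right exact on profinite $\F_p[[\mathbf H]]$-modules (a standard property of the completed tensor product; see e.g.\ \cite{RZ10}). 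From (i) and (ii) we get $\F_p[[\mathbf G]]\,\widehat\otimes_{\F_p[[\mathbf H]]}\F_p[[\mathbf H]]^{k}\cong\F_p[[\mathbf G]]^{k}$, and under this identification $f$ is turned into the homomorphism $\F_p[[\mathbf G]]^{m}\to\F_p[[\mathbf G]]^{n}$ given by the same matrix that $\F_p[[\mathbf G]]\otimes_{\F_p[[\mathbf H]]}-$ produces; in particular the canonical natural transformation $\gamma$ — coming from the universal property of the ordinary tensor product applied to the $\F_p[[\mathbf H]]$-balanced map $\F_p[[\mathbf G]]\times M\to\F_p[[\mathbf G]]\,\widehat\otimes_{\F_p[[\mathbf H]]}M$ — restricts to the identity on free modules. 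Applying $\F_p[[\mathbf G]]\otimes_{\F_p[[\mathbf H]]}-$ (right exact, always) and $\F_p[[\mathbf G]]\,\widehat\otimes_{\F_p[[\mathbf H]]}-$ (right exact by (iii), as $M$ is profinite) to the presentation of $M$ then produces a commutative diagram
\[
\begin{array}{ccccc}
\F_p[[\mathbf G]]^{m} & \to & \F_p[[\mathbf G]]^{n} & \to & \F_p[[\mathbf G]]\otimes_{\F_p[[\mathbf H]]}M\to 0\\
\downarrow & & \downarrow & & \downarrow\\
\F_p[[\mathbf G]]^{m} & \to & \F_p[[\mathbf G]]^{n} & \to & \F_p[[\mathbf G]]\,\widehat\otimes_{\F_p[[\mathbf H]]}M\to 0
\end{array}
\]
with exact rows in which the first two vertical arrows are identities and the third is $\gamma$; the five lemma then gives that $\gamma$ is an isomorphism.

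The step I expect to be the real work is fact (iii), the right exactness of the completed tensor product on profinite modules, together with the routine but necessary bookkeeping that the two induced maps $\F_p[[\mathbf G]]^{m}\to\F_p[[\mathbf G]]^{n}$ agree and that $\gamma$ genuinely fits into the displayed square; once these are settled the argument is purely formal. If one prefers to avoid quoting right exactness of $\widehat\otimes$, an alternative is to fix a continuous (set-theoretic) section of $\mathbf G\to\mathbf G/\mathbf H$, which exhibits $\F_p[[\mathbf G]]$ as a free profinite $\F_p[[\mathbf H]]$-module and makes $\F_p[[\mathbf G]]\,\widehat\otimes_{\F_p[[\mathbf H]]}-$ exact, which is more than enough.
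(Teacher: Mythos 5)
Your proposal is correct and follows essentially the same route as the paper: show $M$ is profinite by using Lemma \ref{continuous} to see that the image of the presentation map is closed, then apply both $\otimes$ and $\widehat\otimes$ (each right exact, with both agreeing on free modules) to the presentation and conclude by the five lemma. The only cosmetic difference is that you spell out the identification $\F_p[[\mathbf G]]\widehat\otimes_{\F_p[[\mathbf H]]}\F_p[[\mathbf H]]\cong\F_p[[\mathbf G]]$, which the paper treats as clear.
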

 \begin{rem}
 It is claimed in \cite[Proposition 5.5.3 (d)]{RZ10}   that it is enough to assume that $M$ is a  finitely generated profinite $\F_p[[\mathbf H]]$-module. However, we want to warn  the reader that the proof  is incorrect.
 \end{rem}
 \begin{proof} Since $M$ is finitely presented, there exists an exact sequence of $\F_p[[\mathbf H]]$-modules
\begin{equation}\label{sequence}
 \F_p[[\mathbf H]]^r\xrightarrow{\alpha} \F_p[[\mathbf H]]^d\to M\to 0.
 \end{equation}
  Thus, we can write  $M\cong \F_p[[\mathbf H]]^d/I$, where $I=\im \alpha$. By Lemma \ref{continuous},  $\alpha$ is continuous.   Since  $\F_p[[\mathbf H]]^r$ is compact and Hausdorff, $I$ is closed in  $\F_p[[\mathbf H]]^d$. Hence  $\F_p[[\mathbf H]]^d/I$, and so $M$, are profinite.

 It is clear that 
$$\F_p[[\mathbf G]]\widehat \otimes_{\F_p[[\mathbf H]] } \F_p[[\mathbf H]]\cong \F_p[[\mathbf G]] \otimes_{\F_p[[\mathbf H]] }\F_p[[\mathbf H]]\cong \F_p[[\mathbf G]] .$$ Thus,
after applying $\F_p[[\mathbf G]]\otimes _{\F_p[[\mathbf H]] }$ and  $\F_p[[\mathbf G]]{\widehat \otimes} _{\F_p[[\mathbf H]] }$ to the sequence (\ref{sequence}) we obtain a commutative diagram of $\F_p[[\mathbf G]]$-modules:
$$\begin{array}{cccccc}
 \F_p[[\mathbf G]]^r & \to &  \F_p[[\mathbf G]]^d & \to&  \F_p[[\mathbf G]]\otimes _{\F_p[[\mathbf H]]} M &\to 0\\
 ||&&||&&\downarrow^\gamma&\\
  \F_p[[\mathbf G]]^r & \to &  \F_p[[\mathbf G]]^d & \to&  \F_p[[\mathbf G]]\widehat \otimes _{\F_p[[\mathbf H]]} M &\to 0.
\end{array}.
$$
Since $\otimes$ and $\widehat \otimes $ are right  exact (see \cite[Theorem 2.6.3]{Ro09} and \cite[Proposition 5.5.3(a)]{RZ10}) the horizontal sequences are exact.  This clearly implies that $\gamma$ is an isomorphism. \end{proof}
 
 Recall that if a pro-$p$ group $\mathbf G$  is finitely generated, then the augmentation ideal $I_{\mathbf G}$ is finitely generated as an $\F_p[[\mathbf G]]$-module. In particular, the trivial $\F_p[[\mathbf G]]$-module $\F_p$ is finitely presented. We can extend this to all  finite discrete $\F_p[[\mathbf G]]$-modules. 
   \begin{lem} \label{finitediscrete}
   Let $\mathbf G$ be a finitely generated pro-$p$ group and let $M$ be a  finite discrete $\F_p[[\mathbf G]]$-module. Then $M$ is   finitely presented as an $\F_p[[G]]$-module.
   \end{lem}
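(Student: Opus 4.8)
The plan is to reduce the statement about a general finite discrete $\F_p[[\mathbf G]]$-module $M$ to the already-known case of the trivial module $\F_p$ (which is finitely presented because $\mathbf G$ is finitely generated, so $I_{\mathbf G}$ is a finitely generated $\F_p[[\mathbf G]]$-module). First I would exploit the discreteness hypothesis: since $M$ is finite and discrete, the two-sided annihilator ideal $J=\Ann_{\F_p[[\mathbf G]]}(M)$ is open in $\F_p[[\mathbf G]]$, hence contains an ideal of the form $\ker(\F_p[[\mathbf G]]\to \F_p[\mathbf G/\mathbf U])$ for some open normal subgroup $\mathbf U\trianglelefteq\mathbf G$. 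Consequently the $\F_p[[\mathbf G]]$-module structure on $M$ factors through the finite quotient algebra $R=\F_p[\mathbf G/\mathbf U]$, and it suffices to show that every finite $R$-module is finitely presented as an $\F_p[[\mathbf G]]$-module.

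The key observation is that $R=\F_p[\mathbf G/\mathbf U]$ is itself a finitely presented $\F_p[[\mathbf G]]$-module: there is an exact sequence $0\to I_{\mathbf U}^{\mathbf G}\to \F_p[[\mathbf G]]\to \F_p[\mathbf G/\mathbf U]\to 0$, where $I_{\mathbf U}^{\mathbf G}$ is the left ideal generated by the (topological) augmentation ideal of the open, hence finitely generated, subgroup $\mathbf U$; by Lemma \ref{isomdiscr}(a) (in its pro-$p$ incarnation) $I_{\mathbf U}^{\mathbf G}\cong \F_p[[\mathbf G]]\widehat\otimes_{\F_p[[\mathbf U]]}I_{\mathbf U}$, which is finitely generated over $\F_p[[\mathbf G]]$ since $I_{\mathbf U}$ is finitely generated over $\F_p[[\mathbf U]]$. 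Thus $R$ has a presentation $\F_p[[\mathbf G]]^s\to\F_p[[\mathbf G]]\to R\to 0$. Now $M$, being a finite module over the finite-dimensional $\F_p$-algebra $R$, admits a finite free presentation $R^a\to R^b\to M\to 0$ over $R$; splicing this with the finite presentation of $R$ over $\F_p[[\mathbf G]]$ (and using that a finite presentation of $R$ yields a finite presentation of every $R^n$ over $\F_p[[\mathbf G]]$) produces a finite presentation $\F_p[[\mathbf G]]^m\to\F_p[[\mathbf G]]^n\to M\to 0$, as desired.

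The only mildly delicate point — and the step I expect to need the most care — is the transitivity of finite presentation along the change of rings $\F_p[[\mathbf G]]\to R$: given that $R$ and $M$ are finitely presented over $\F_p[[\mathbf G]]$ and over $R$ respectively, one must check the map $R^a\to R^b$ lifts (after enlarging) to a map of finitely generated free $\F_p[[\mathbf G]]$-modules and that the resulting complex over $\F_p[[\mathbf G]]$ is exact; this is a standard diagram chase, using that $R$ is a \emph{finitely generated} $\F_p[[\mathbf G]]$-module so that the relation module of $M$ over $\F_p[[\mathbf G]]$ is an extension of the (finitely generated) relation module over $R$ by a finite multiple of the relation module of $R$. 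Everything else is bookkeeping with right-exactness of tensor products, already quoted in the excerpt.
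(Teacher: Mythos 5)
Your argument is correct, but it is organized differently from the paper's. The paper's proof is a one-line reduction: the kernel of a surjection $\F_p[[\mathbf G]]^n\to M$ is an open submodule of a finitely generated profinite module, and every such open submodule is shown to be finitely generated by induction on its index, using only that $I_{\mathbf G}$ is finitely generated (an open submodule of index $p$ in a finitely generated $L$ contains $I_{\mathbf G}L$, since $L$ modulo that submodule is the unique simple module $\F_p$, and is therefore generated by $I_{\mathbf G}L$ together with finitely many lifts from the finite quotient $L/I_{\mathbf G}L$). You instead exploit the discreteness hypothesis head-on: the annihilator of the finite module $M$ is open, hence contains $I_{\mathbf U}^{\mathbf G}=\ker\bigl(\F_p[[\mathbf G]]\to\F_p[\mathbf G/\mathbf U]\bigr)$ for some open normal $\mathbf U$, so the action factors through the finite algebra $R=\F_p[\mathbf G/\mathbf U]$; you then check $R$ is finitely presented over $\F_p[[\mathbf G]]$ (via Lemma \ref{usisom}(a) and the finite generation of $I_{\mathbf U}$ for the open, hence finitely generated, subgroup $\mathbf U$) and conclude by transitivity of finite presentation along the surjection $\F_p[[\mathbf G]]\to R$. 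Both routes bottom out in the same fact, finite generation of augmentation ideals, but the paper's induction treats arbitrary open submodules and never names the annihilator, whereas your version isolates the discreteness in a single structural statement and trades the induction for the (routine, and correctly flagged) change-of-rings lemma. The extra inputs you use beyond the paper's toolkit are standard: that the ideals $I_{\mathbf U}^{\mathbf G}$ form a neighbourhood base of $0$ consisting of the kernels of the finite quotient maps, and that a quotient of a finitely presented module by a finitely generated submodule is finitely presented. No gap.
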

   \begin{proof} We have to show that every open submodule  of a finitely generated profinite $\F_p[[\mathbf G]]$-module $M$ is finitely generated. Arguing by induction on the index of the open submodule, we easily obtain this statement from the fact that $I_{\mathbf G}$ is finitely generated.
   \end{proof}

  \subsection{Left ideals in    completed group algebras}
  Now we apply the results of the previous subsection in a particular situation that interests us.
 
Let $\mathbf G$ be a  pro-$p$ group.     
 The definition of free profinite $\F_p[[\mathbf G]]$-modules can be found in \cite[Section 5.2]{RZ10}. We will  need the the following fact.
 
 \begin{lem}\label{freemodule}
 Let $\mathbf G$ be a  pro-$p$ group and $\mathbf H$ a closed subgroup of $\mathbf G$. Then $\F_p[[\mathbf G]]$ is a free profinite $\F_p[[\mathbf H]]$-module. In particular the functor $\F_p[[\mathbf G]]\widehat \otimes_{\F_p[[\mathbf H]]}-$ is exact.
 \end{lem}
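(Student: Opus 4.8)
The plan is to reduce the statement to the elementary fact that a finite group algebra is free over the group algebra of any subgroup, and then pass to the inverse limit using a continuous choice of coset representatives. By definition $\F_p[[\mathbf G]]=\varprojlim_{\mathbf U}\F_p[\mathbf G/\mathbf U]$, the limit being taken over the open normal subgroups $\mathbf U$ of $\mathbf G$; for such a $\mathbf U$ write $\mathbf H_{\mathbf U}$ for the image of $\mathbf H$ in the finite group $\mathbf G/\mathbf U$, so that the projection $\F_p[[\mathbf H]]\to\F_p[\mathbf H_{\mathbf U}]$ is compatible with $\F_p[[\mathbf G]]\to\F_p[\mathbf G/\mathbf U]$. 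Choosing a set $T_{\mathbf U}$ of representatives of the right cosets $\mathbf H_{\mathbf U}\backslash(\mathbf G/\mathbf U)$ yields the classical decomposition $\F_p[\mathbf G/\mathbf U]=\bigoplus_{t\in T_{\mathbf U}}\F_p[\mathbf H_{\mathbf U}]\,t$ as a right $\F_p[\mathbf H_{\mathbf U}]$-module, so $\F_p[\mathbf G/\mathbf U]$ is free over $\F_p[\mathbf H_{\mathbf U}]$ on the finite set $\mathbf H_{\mathbf U}\backslash(\mathbf G/\mathbf U)$, which is a canonical quotient of the profinite space $\mathbf H\backslash\mathbf G$.

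The next step is to make the transversals coherent. The right coset space is a profinite space with $\mathbf H\backslash\mathbf G=\varprojlim_{\mathbf U}\mathbf H_{\mathbf U}\backslash(\mathbf G/\mathbf U)$, and the canonical projection $\mathbf G\to\mathbf H\backslash\mathbf G$ admits a continuous section $\sigma\colon\mathbf H\backslash\mathbf G\to\mathbf G$ (a standard fact about closed subgroups of profinite groups; see \cite[Lemma~5.6.5]{RZ10}). Composing $\sigma$ with $\mathbf G\to\mathbf G/\mathbf U$ produces, for every $\mathbf U$, a transversal $T_{\mathbf U}$ of the type above, and these are mutually compatible under the transition maps of the inverse system.

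Finally, with $\sigma$ fixed, the assignment $\mathbf Hg\mapsto\sigma(\mathbf Hg)\in\F_p[[\mathbf G]]$ extends to a continuous homomorphism of right $\F_p[[\mathbf H]]$-modules from the free profinite $\F_p[[\mathbf H]]$-module on the pointed profinite space $(\mathbf H\backslash\mathbf G,\mathbf H)$, as defined in \cite[Section 5.2]{RZ10}, to $\F_p[[\mathbf G]]$; on each finite quotient $\F_p[\mathbf G/\mathbf U]$ it restricts to the classical isomorphism above, hence it is an isomorphism of profinite modules. Therefore $\F_p[[\mathbf G]]$ is a free profinite right $\F_p[[\mathbf H]]$-module, and via the anti-automorphism $g\mapsto g^{-1}$ also a free profinite left one; the final assertion is then immediate, since the completed tensor product $\F_p[[\mathbf G]]\widehat\otimes_{\F_p[[\mathbf H]]}-$ with a free profinite module is exact \cite[Section 5]{RZ10}. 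The only genuine subtlety is the existence and use of the continuous section $\sigma$; everything else is classical finite group theory together with a formal passage to the inverse limit.
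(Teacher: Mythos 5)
Your proposal is, in substance, the same route as the paper's: the paper disposes of this lemma in one line by citing \cite[Corollary 5.7.2]{RZ10} for the freeness and \cite[Proposition 5.5.3(e)]{RZ10} for the exactness, and what you have written out is essentially the standard proof of that Corollary 5.7.2 (continuous section of $\mathbf G\to\mathbf H\backslash\mathbf G$, identification with the free profinite module on the pointed coset space). So the content is correct and you are simply supplying the argument the paper outsources.

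Two small points of care. First, the step ``composing $\sigma$ with $\mathbf G\to\mathbf G/\mathbf U$ produces a transversal $T_{\mathbf U}$'' is too quick as stated: $\pi_{\mathbf U}\circ\sigma$ need not be constant on the fibres of $\mathbf H\backslash\mathbf G\to\mathbf H_{\mathbf U}\backslash(\mathbf G/\mathbf U)$, so its image is a set of coset representatives that may be redundant, hence not a basis of $\F_p[\mathbf G/\mathbf U]$ over $\F_p[\mathbf H_{\mathbf U}]$. The standard fix is either to pass to the cofinal system of finite quotients of $\mathbf H\backslash\mathbf G$ through which each $\pi_{\mathbf U}\circ\sigma$ factors, or to define the map out of the free profinite module on $(\mathbf H\backslash\mathbf G,*)$ by its universal property and check bijectivity directly from the homeomorphism $\mathbf H\times(\mathbf H\backslash\mathbf G)\to\mathbf G$, $(h,x)\mapsto h\sigma(x)$. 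Second, the decomposition $\F_p[\mathbf G/\mathbf U]=\bigoplus_t\F_p[\mathbf H_{\mathbf U}]t$ indexed by right cosets exhibits a free \emph{left} module, not a right one; since you invoke the anti-automorphism $g\mapsto g^{-1}$ at the end anyway, this is only a labelling slip, but the sides should be stated consistently.
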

 \begin{proof}
 The first part follows from \cite[Corollary 5.7.2]{RZ10} and the second  one from \cite[Proposition 5.5.3 (e)]{RZ10}. 
 \end{proof}
We denote by $I_\mathbf G$ the augmentation ideal of $\F_p[[\mathbf G]]$.  The following lemma is well-known. We provide a proof for the convenience of the reader.
\begin{lem}\label{finitepresent}
Let $\mathbf H$ be a finitely  presented   pro-$p$ group. Then $I_\mathbf H$ is finitely presented as a $\F_p[[\mathbf H]]$-module.
\end{lem}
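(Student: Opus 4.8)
The plan is to use the standard correspondence between a finite presentation of a pro-$p$ group $\mathbf H$ and a two-step free resolution of the trivial module $\F_p$, and then identify $I_{\mathbf H}$ with a syzygy appearing in that resolution. Concretely, if $\mathbf H$ is finitely presented, then there is a short exact sequence of pro-$p$ groups $1\to \mathbf R\to \mathbf F\to \mathbf H\to 1$ with $\mathbf F$ a finitely generated free pro-$p$ group on $d=d(\mathbf H)$ generators and $\mathbf R$ the normal closure of finitely many elements $r_1,\dots,r_m$. First I would recall the beginning of the standard free resolution of $\F_p$ over $\F_p[[\mathbf H]]$: the sequence
$$\F_p[[\mathbf H]]^m \xrightarrow{\ \partial_2\ } \F_p[[\mathbf H]]^d \xrightarrow{\ \partial_1\ } \F_p[[\mathbf H]] \xrightarrow{\ \varepsilon\ } \F_p \to 0$$
is exact, where $\partial_1$ sends the $i$th basis element to $x_i-1$ (images of the free generators) and $\partial_2$ is given by the images of the Fox derivatives of the relators $r_j$. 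The exactness at $\F_p[[\mathbf H]]^d$ is the pro-$p$ analogue of the classical fact about relation modules, and it is exactly the statement that $I_{\mathbf H}=\im\partial_1$ has $\ker\partial_1=\im\partial_2$, i.e. $I_{\mathbf H}$ is the cokernel of $\partial_2$. Since the source and target of $\partial_2$ are finitely generated free $\F_p[[\mathbf H]]$-modules, this exhibits $I_{\mathbf H}$ as finitely presented.

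For the details I would proceed as follows. Let $\mathbf F$ be the finitely generated free pro-$p$ group. By Proposition \ref{Lazard}, $I_{\mathbf F}$ is a free $\F_p[[\mathbf F]]$-module of rank $d$ (on the elements $x_i-1$), so in particular $\F_p$ is finitely presented over $\F_p[[\mathbf F]]$. Now one passes from $\mathbf F$ to $\mathbf H=\mathbf F/\mathbf R$. Tensoring the resolution $0\to I_{\mathbf F}\to \F_p[[\mathbf F]]\to \F_p\to 0$ up along $\F_p[[\mathbf F]]\to \F_p[[\mathbf H]]$ (i.e. applying $\F_p[[\mathbf H]]\widehat\otimes_{\F_p[[\mathbf F]]}-$, which is exact by Lemma \ref{freemodule}) gives that $\F_p[[\mathbf H]]\widehat\otimes_{\F_p[[\mathbf F]]} I_{\mathbf F}\cong I_{\mathbf F}/I_{\mathbf R}I_{\mathbf F}$ surjects onto $I_{\mathbf H}$, and the kernel is the image of $I_{\mathbf R}/I_{\mathbf R}^2$ (the abelianization of $\mathbf R$ as an $\F_p[[\mathbf H]]$-module), via the pro-$p$ version of the five-term / Stallings exact sequence. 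Since $\mathbf R$ is finitely generated as a normal closed subgroup of $\mathbf F$, the module $I_{\mathbf R}/I_{\mathbf R}^2 I_{\mathbf F}$ — equivalently the relation module — is finitely generated over $\F_p[[\mathbf H]]$. Combining: $I_{\mathbf H}$ is a quotient of the finitely generated free module $\F_p[[\mathbf H]]^d$ by a finitely generated submodule, hence finitely presented. Here one also uses Lemma \ref{finitediscrete} together with Lemma \ref{profinitetensor} to pass freely between the abstract and profinite tensor products, and Lemma \ref{continuous} to guarantee continuity of the maps involved.

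Alternatively, and perhaps more cleanly, I would invoke that $\F_p$ is finitely presented (indeed of type $\mathrm{FP}_\infty$ in low degrees) over $\F_p[[\mathbf H]]$ whenever $\mathbf H$ is finitely presented as a pro-$p$ group: this is the content of the cohomological finiteness properties of finitely presented pro-$p$ groups, $\mathrm{FP}_2$ being equivalent to finite presentability (see \cite[Section 5.2]{RZ10}). Given that $\F_p$ is $\mathrm{FP}_2$, there is an exact sequence $P_2\to P_1\to P_0\to \F_p\to 0$ with each $P_i$ finitely generated free, and $P_0=\F_p[[\mathbf H]]$, $\ker(P_0\to\F_p)=I_{\mathbf H}$; then $I_{\mathbf H}=\coker(P_2\to P_1)$ is finitely presented.

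The main obstacle is not any single hard computation but rather bookkeeping: one must be careful that all the relevant tensor products are the correct (profinite) ones and that all module maps in sight are continuous, so that the short exact sequences one writes down are genuinely exact in the category of profinite $\F_p[[\mathbf H]]$-modules. This is precisely where Lemmas \ref{continuous}, \ref{profinitetensor}, \ref{freemodule} and \ref{finitediscrete} are used, and where the subtlety flagged in the remark after Lemma \ref{profinitetensor} (finite presentation, not merely finite generation, is what one needs) must be respected.
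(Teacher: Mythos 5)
Your proof is correct, and your ``alternatively'' paragraph is essentially the paper's own argument: the paper observes that over the local ring $\F_p[[\mathbf H]]$ a profinite module is finitely presented iff $H_0$ and $H_1$ are finite, invokes finiteness of $H_1(\mathbf H;\F_p)$ and $H_2(\mathbf H;\F_p)$ for a finitely presented pro-$p$ group, and concludes via the dimension shift $H_i(\mathbf H; I_{\mathbf H})=H_{i+1}(\mathbf H;\F_p)$ --- which is exactly your ``$\mathrm{FP}_2$'' formulation in homological disguise. Your first, explicit route through the Gruenberg/Fox resolution of a presentation $1\to\mathbf R\to\mathbf F\to\mathbf H\to 1$ is a concrete unpacking of the same fact and is also sound, with one caveat: you cite Lemma \ref{freemodule} to claim that $\F_p[[\mathbf H]]\widehat\otimes_{\F_p[[\mathbf F]]}-$ is exact, but that lemma concerns freeness of $\F_p[[\mathbf G]]$ over a closed \emph{subgroup}, not over the quotient map $\F_p[[\mathbf F]]\to\F_p[[\mathbf H]]$; the functor here is only right exact, and its first derived term $\Tor_1^{\F_p[[\mathbf F]]}(\F_p[[\mathbf H]],\F_p)\cong H_1(\mathbf R;\F_p)$ is precisely the relation module you describe as the kernel. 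Since you correctly account for that kernel and for its finite generation (from $\mathbf R$ being the closed normal closure of finitely many relators), the miscitation does not damage the argument, but the exactness claim should be deleted.
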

\begin{proof} Since $\F_p[[\mathbf H]]$ is a local ring, a profinite $\F_p[[\mathbf H]]$-module $M$ is finitely presented  if and only if $H_0(\mathbf H; M)$ and $H_1(\mathbf H, M)$ are finite. Since $\mathbf H$ is finitely presented as a pro-$p$ group, \cite[Theorem 7.8.1 and Theorem 7.8.3]{RZ10} imply that $H_1(\mathbf H; \F_p)$ and $H_2(\mathbf H; \F_p)$ are finite. However, we have that for $i\ge 0$, $H_i(\mathbf H, I_{\mathbf H})=H_{i+1}(\mathbf H;\F_p)$. Thus, $I_\mathbf H$ is finitely presented as a $\F_p[[\mathbf H]]$-module.
\end{proof}

If $\mathbf H$ is a closed subgroup of $\mathbf G$, then $I_\mathbf H^\mathbf G$ denotes the closed left ideal of $\F_p[[\mathbf G]]$ generated by $I_\mathbf H$. 

\begin{lem} \label{usisom} Let $\mathbf G$ be a pro-$p$ group and let $\mathbf H\le \mathbf T$ be   closed subgroups of $\mathbf G$. Then the following holds.
\begin{enumerate}
\item [(a)] The   continuous map $$\displaystyle \F_p[[\mathbf G]]\widehat \otimes_{\F_p[[\mathbf H]]} I_\mathbf H\to I_\mathbf H^\mathbf G$$ that sends $a\otimes b$ to $ab$, is an isomorphism of $\F_p[[\mathbf G]]$-modules.

\item [(b)] If $\mathbf H$ is finitely presented, the  map $$\displaystyle \F_p[[\mathbf G]]\otimes_{\F_p[[\mathbf H]]} I_\mathbf H\to I_\mathbf H^\mathbf G$$ that sends $a\otimes b$ to $ab$, is an isomorphism of $\F_p[[\mathbf G]]$-modules.
\item [(c)] If $\mathbf T$ is finitely presented and $\mathbf H$ is finitely generated, the   map $$\displaystyle \F_p[[\mathbf G]]\otimes_{\F_p[[\mathbf T]]} (I_\mathbf T/I_\mathbf H^\mathbf T) \to  I_\mathbf T^\mathbf G/I_\mathbf H^\mathbf G$$
  sending $a\otimes (b+I_\mathbf H^\mathbf T)$ to $ab+I_\mathbf H^\mathbf G$,  is an isomorphism of $\F_p[[\mathbf G]]$-modules.

\end{enumerate}

\end{lem}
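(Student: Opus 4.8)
The plan is to transfer the proof of Lemma \ref{isomdiscr} to the profinite setting, using Lemma \ref{freemodule} in place of the freeness of $k[G]$ over $k[H]$ and, wherever an ordinary tensor product is asserted, passing from $\widehat\otimes$ to $\otimes$ by means of Lemma \ref{profinitetensor}.

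For (a), I would start from the exact sequence of profinite $\F_p[[\mathbf H]]$-modules
$$0\to I_{\mathbf H}\to \F_p[[\mathbf H]]\to \F_p\to 0.$$
By Lemma \ref{freemodule} the functor $\F_p[[\mathbf G]]\widehat\otimes_{\F_p[[\mathbf H]]}-$ is exact, so, using the canonical identification $\F_p[[\mathbf G]]\widehat\otimes_{\F_p[[\mathbf H]]}\F_p[[\mathbf H]]\cong\F_p[[\mathbf G]]$, we obtain an exact sequence
$$0\to \F_p[[\mathbf G]]\widehat\otimes_{\F_p[[\mathbf H]]} I_{\mathbf H}\xrightarrow{\ \alpha\ }\F_p[[\mathbf G]]\xrightarrow{\ \beta\ }\F_p[[\mathbf G]]\widehat\otimes_{\F_p[[\mathbf H]]}\F_p\to 0,$$
where $\alpha(a\otimes b)=ab$ and $\beta(a)=a\otimes 1$. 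Since $\alpha$ is a morphism of profinite modules, $\im\alpha$ is a closed $\F_p[[\mathbf G]]$-submodule of $\F_p[[\mathbf G]]$; it contains $I_{\mathbf H}$, and each of its elements is a limit of finite sums $\sum a_ib_i$ with $b_i\in I_{\mathbf H}$, so $\im\alpha=I_{\mathbf H}^{\mathbf G}$. Hence $\alpha$ is the asserted isomorphism onto $\ker\beta=I_{\mathbf H}^{\mathbf G}$. Part (b) is then immediate: if $\mathbf H$ is finitely presented, then $I_{\mathbf H}$ is a finitely presented $\F_p[[\mathbf H]]$-module by Lemma \ref{finitepresent}, so Lemma \ref{profinitetensor} identifies $\F_p[[\mathbf G]]\otimes_{\F_p[[\mathbf H]]} I_{\mathbf H}$ with $\F_p[[\mathbf G]]\widehat\otimes_{\F_p[[\mathbf H]]} I_{\mathbf H}\cong I_{\mathbf H}^{\mathbf G}$, and the composition is visibly the map in the statement.

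For (c), I would mimic the proof of Lemma \ref{isomdiscr}(b). Apply the exact functor $\F_p[[\mathbf G]]\widehat\otimes_{\F_p[[\mathbf T]]}-$ to the exact sequence of $\F_p[[\mathbf T]]$-modules
$$0\to I_{\mathbf H}^{\mathbf T}\to I_{\mathbf T}\to I_{\mathbf T}/I_{\mathbf H}^{\mathbf T}\to 0.$$
By part (a) (applied to $\mathbf T\le\mathbf G$) the middle term becomes $I_{\mathbf T}^{\mathbf G}$; using part (a) applied to $\mathbf H\le\mathbf T$ together with the associativity of $\widehat\otimes$ and $\F_p[[\mathbf G]]\widehat\otimes_{\F_p[[\mathbf T]]}\F_p[[\mathbf T]]\cong\F_p[[\mathbf G]]$, the left term becomes $\F_p[[\mathbf G]]\widehat\otimes_{\F_p[[\mathbf H]]} I_{\mathbf H}\cong I_{\mathbf H}^{\mathbf G}$, compatibly with the inclusion $I_{\mathbf H}^{\mathbf G}\hookrightarrow I_{\mathbf T}^{\mathbf G}$. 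This yields an isomorphism
$$\F_p[[\mathbf G]]\widehat\otimes_{\F_p[[\mathbf T]]}\bigl(I_{\mathbf T}/I_{\mathbf H}^{\mathbf T}\bigr)\ \cong\ I_{\mathbf T}^{\mathbf G}/I_{\mathbf H}^{\mathbf G},\qquad a\otimes(b+I_{\mathbf H}^{\mathbf T})\mapsto ab+I_{\mathbf H}^{\mathbf G}.$$
It remains to pass from $\widehat\otimes$ to $\otimes$, for which I must check that $I_{\mathbf T}/I_{\mathbf H}^{\mathbf T}$ is a finitely presented $\F_p[[\mathbf T]]$-module. Indeed $I_{\mathbf T}$ is finitely presented by Lemma \ref{finitepresent} (as $\mathbf T$ is finitely presented), while $I_{\mathbf H}^{\mathbf T}\cong \F_p[[\mathbf T]]\widehat\otimes_{\F_p[[\mathbf H]]} I_{\mathbf H}$ is a finitely generated $\F_p[[\mathbf T]]$-module because $\mathbf H$ is finitely generated; a quotient of a finitely presented module by a finitely generated submodule is finitely presented. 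Then Lemma \ref{profinitetensor} gives $\F_p[[\mathbf G]]\otimes_{\F_p[[\mathbf T]]}(I_{\mathbf T}/I_{\mathbf H}^{\mathbf T})\cong\F_p[[\mathbf G]]\widehat\otimes_{\F_p[[\mathbf T]]}(I_{\mathbf T}/I_{\mathbf H}^{\mathbf T})$, and composing yields the desired isomorphism.

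I expect the main obstacle to lie in part (c): establishing that $I_{\mathbf T}/I_{\mathbf H}^{\mathbf T}$ is finitely presented (so that Lemma \ref{profinitetensor} applies) and, above all, keeping track of the several canonical identifications — the two instances of part (a), the associativity of $\widehat\otimes$, and the right-exactness used to compute the cokernel — so that the composite isomorphism is exactly the map $a\otimes(b+I_{\mathbf H}^{\mathbf T})\mapsto ab+I_{\mathbf H}^{\mathbf G}$. A smaller technical point, already in (a), is the identification of $\im\alpha$ with the \emph{closed} left ideal $I_{\mathbf H}^{\mathbf G}$, which relies on the image of a morphism of profinite modules being closed.
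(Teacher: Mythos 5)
Your proposal is correct and follows essentially the same route as the paper: the same short exact sequences, exactness of $\F_p[[\mathbf G]]\widehat\otimes_{\F_p[[\mathbf H]]}-$ from Lemma \ref{freemodule}, and the passage from $\widehat\otimes$ to $\otimes$ via Lemmas \ref{finitepresent} and \ref{profinitetensor}, including the observation that $I_{\mathbf T}/I_{\mathbf H}^{\mathbf T}$ is a quotient of a finitely presented module by a finitely generated one. The extra care you take in identifying $\im\alpha$ with the closed ideal $I_{\mathbf H}^{\mathbf G}$ is a detail the paper leaves implicit, but it is the same argument.
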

\begin{proof}
(a) Consider an exact sequence
$$0\to I_\mathbf H\to\F_p[[\mathbf H]]\to\F_p \to 0.$$
By Lemma  \ref{freemodule}, the sequence 
$$0\to\F_p[[\mathbf G]]\widehat \otimes_{\F_p[[\mathbf H]]} I_\mathbf H\xrightarrow{\alpha}\F_p[[\mathbf G]]\xrightarrow{\beta}\F_p[[\mathbf G]]\widehat \otimes_{\F_p[[\mathbf H]]}\F_p\to 0$$
 is also exact.  
Thus,  $\F_p[[\mathbf G]]\widehat \otimes_{\F_p[[\mathbf H]]} I_\mathbf H$ is isomorphic to $\ker \beta=I_\mathbf H^\mathbf G$. 

(b) Since $\mathbf H$ is finitely presented, by Lemma \ref{finitepresent}, $I_\mathbf H$ is finitely presented as $\F_p[[\mathbf H]]$-module, and so, by Lemma \ref{profinitetensor},  
$$\F_p[[\mathbf G]]\widehat \otimes_{\F_p[[\mathbf H]]} I_\mathbf H\cong \F_p[[\mathbf G]] \otimes_{\F_p[[\mathbf H]]} I_\mathbf H.$$ 
Thus we conclude that the natural map from  $\F_p[[\mathbf G]]\otimes_{\F_p[[\mathbf H]]} I_\mathbf H$ to $I_\mathbf H^\mathbf G$ is also an isomorphism by (a).

(c) Consider now the exact sequence
$$0\to I_\mathbf H^\mathbf T\to I_\mathbf T\to I_\mathbf T/I_\mathbf H^\mathbf T\to 0.$$ Applying $\F_p[[\mathbf G]]\widehat \otimes_{\F_p[[\mathbf T]]}$ and taking   again into account that $\F_p[[\mathbf G]]$ is a free profinite $\F_p[[\mathbf T]]$-module (Lemma \ref{finitepresent}), we obtain the exact sequence

$$0\to I_\mathbf H^\mathbf G\to I_\mathbf T^\mathbf G\to\F_p[[\mathbf G]]\widehat \otimes_{\F_p[[\mathbf T]]}( I_\mathbf T/I_\mathbf H^\mathbf T)\to 0.$$ 
Since $\mathbf T$ is finitely presented and $\mathbf H$ is finitely generated,  by Lemma \ref{finitepresent},  $I_\mathbf T/I_\mathbf H^\mathbf T$ is finitely presented  as $\F_p[[\mathbf T]]$-module. 
Thus,  by Lemma \ref{profinitetensor}, the canonical map $$ \F_p[[\mathbf G]]\otimes_{\F_p[[\mathbf T]]}( I_\mathbf T/ I_\mathbf H^\mathbf T)\to \F_p[[\mathbf G]]\widehat \otimes_{\F_p[[\mathbf T]]}( I_\mathbf T/I_\mathbf H^\mathbf T) $$ is an isomorphism.  This proves the last claim.
 \end{proof}
\begin{cor}\label{freemod} Let $\mathbf H$ be a finitely generated free subgroup of a pro-$p$ group $\mathbf G$. Then $I_{\mathbf H}^{\mathbf G}$ is a free $\F_p[[\mathbf G]]$-module  of rank $d(\mathbf H)$.
\end{cor}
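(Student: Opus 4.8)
The plan is to reduce Corollary \ref{freemod} to part (b) of Lemma \ref{usisom} together with the description of $I_{\mathbf H}$ as a free $\F_p[[\mathbf H]]$-module coming from Proposition \ref{Lazard}. First I would recall that a finitely generated free pro-$p$ group $\mathbf H$ is finitely presented (it has a presentation with no relations), so Lemma \ref{usisom}(b) applies and gives an isomorphism of $\F_p[[\mathbf G]]$-modules
$$\F_p[[\mathbf G]]\otimes_{\F_p[[\mathbf H]]} I_{\mathbf H}\xrightarrow{\ \sim\ } I_{\mathbf H}^{\mathbf G}.$$
Hence it suffices to show that $I_{\mathbf H}$ is a free $\F_p[[\mathbf H]]$-module of rank $d(\mathbf H)$; then tensoring up preserves freeness and rank.

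Next I would establish the freeness of $I_{\mathbf H}$. Write $d=d(\mathbf H)$ and let $x_1,\dots,x_d$ be a free generating set of $\mathbf H$. By Proposition \ref{Lazard}, $\F_p[[\mathbf H]]\cong \F_p\langle\!\langle y_1,\dots,y_d\rangle\!\rangle$ with $y_i\leftrightarrow x_i-1$, and under this identification the augmentation ideal $I_{\mathbf H}$ corresponds to the closed two-sided ideal generated by $y_1,\dots,y_d$, which as a \emph{left} ideal is the set of power series with zero constant term. In the free power series ring every element of this left ideal can be written uniquely as $\sum_{i=1}^d a_i y_i$ with $a_i\in \F_p\langle\!\langle y_1,\dots,y_d\rangle\!\rangle$ (group the monomials by their last letter). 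This is exactly the statement that $y_1,\dots,y_d$ form a free basis, so $I_{\mathbf H}$ is a free $\F_p[[\mathbf H]]$-module of rank $d$. Translating back, $x_1-1,\dots,x_d-1$ freely generate $I_{\mathbf H}$.

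Putting these together, $I_{\mathbf H}^{\mathbf G}\cong \F_p[[\mathbf G]]\otimes_{\F_p[[\mathbf H]]}\F_p[[\mathbf H]]^d\cong \F_p[[\mathbf G]]^d$ as $\F_p[[\mathbf G]]$-modules, which is the assertion. I expect the only genuine point requiring care to be the uniqueness of the expansion $\sum a_i y_i$ in the free power series ring — i.e.\ checking that the $y_i$ are not merely generators of $I_{\mathbf H}$ but a free basis; this is the standard "formal derivative"/Fox-calculus style argument in $\F_p\langle\!\langle Y\rangle\!\rangle$, and everything else is a direct appeal to results already proved in this section (Proposition \ref{Lazard} and Lemma \ref{usisom}(b)).
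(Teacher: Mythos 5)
Your proposal is correct and follows the same route as the paper: apply Lemma \ref{usisom}(b) to reduce to the freeness of $I_{\mathbf H}$ over $\F_p[[\mathbf H]]$, which comes from Lazard's identification of $\F_p[[\mathbf H]]$ with the free power series ring. The paper simply cites Proposition \ref{Lazard} for the freeness of $I_{\mathbf H}$; your extra paragraph on the uniqueness of the expansion $\sum a_i y_i$ just makes that citation explicit.
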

\begin{proof}
By Proposition \ref{Lazard}, $I_{\mathbf H}$ is a free $\F_p[[\mathbf H]]$-module of rank $d(\mathbf H)$. Therefore, by Lemma \ref{usisom}(b),  $I_{\mathbf H}^{\mathbf G}$ is a free $\F_p[[\mathbf G]]$-module  of rank $d(\mathbf H)$.
\end{proof}
\subsection{On amalgamated products of groups}

Let  $G$ be a group and $H_1$ and $H_2$  two subgroups that generate $G$ and have intersection $A=H_1\cap H_2$.  The following result gives an algebraic condition for $G$ to be isomorphic to the amalgamated product of $H_1$ and $H_2$ over $A$.

\begin{pro}\label{critamal} \cite{Le70}  Let $k$ be a non-trivial commutative ring. Then the canonical map $H_1*_AH_2\to G$ is an isomorphism if and only if $I_{H_1}^G\cap I_{H_2}^G=I_A^G$ in $k[G]$.

\end{pro}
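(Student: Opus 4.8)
The plan is to translate the condition $I_{H_1}^G\cap I_{H_2}^G=I_A^G$ into the acyclicity of an explicit graph on which $G$ acts, and then to read off the conclusion from the fundamental theorem of Bass--Serre theory. Observe first that the canonical map $\rho\colon H_1*_AH_2\to G$ is automatically surjective, since its image contains $H_1$ and $H_2$; so the only issue is injectivity.

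\emph{Reduction to a boundary map.} Since $A\le H_1$ and $A\le H_2$ we have $I_A^G\subseteq I_{H_1}^G\cap I_{H_2}^G$ in $k[G]$, so the asserted equality is equivalent to the vanishing of $\bigl(I_{H_1}^G\cap I_{H_2}^G\bigr)/I_A^G$ inside $k[G]/I_A^G\cong k[G]\otimes_{k[A]}k$, the permutation $k[G]$-module $k[G/A]$ on the left cosets of $A$. The kernel of the natural surjection $k[G/A]=k[G]/I_A^G\twoheadrightarrow k[G]/I_{H_i}^G\cong k[G/H_i]$ is $I_{H_i}^G/I_A^G$ (using $I_A^G\subseteq I_{H_i}^G$; cf. Lemma~\ref{isomdiscr}). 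Hence $\bigl(I_{H_1}^G\cap I_{H_2}^G\bigr)/I_A^G=\ker\delta$, where
$$\delta\colon k[G/A]\longrightarrow k[G/H_1]\oplus k[G/H_2],\qquad gA\longmapsto\bigl(gH_1,\,-gH_2\bigr).$$
Thus the proposition reduces to the assertion: $\delta$ is injective if and only if $\rho$ is an isomorphism.

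\emph{A graph and Bass--Serre.} Let $\Gamma$ be the graph with vertex set $G/H_1\sqcup G/H_2$ and edge set $G/A$, the edge $gA$ joining $gH_1$ to $gH_2$; then $\delta$ is its augmented simplicial boundary map, and $\Gamma$ is connected precisely because $H_1$ and $H_2$ generate $G$. Connectedness gives $\ker\delta=H_1(\Gamma;k)$, and since $H_1(\Gamma;\Z)$ is free abelian and $k$ is non-trivial, $\ker\delta=0$ if and only if $H_1(\Gamma;\Z)=0$, i.e. if and only if $\Gamma$ is a tree; in particular the hypothesis is independent of the choice of non-trivial $k$. Now $G$ acts on $\Gamma$ by left translation: it preserves the bipartition (hence acts without inversions), it is transitive on edges with edge-stabilizer $A$, and transitive on each vertex orbit with stabilizers $H_1$ and $H_2$, the quotient $\Gamma/G$ being a single edge. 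If $\Gamma$ is a tree, the structure theorem for groups acting on trees identifies $G$ with the fundamental group of the associated graph of groups, namely $H_1*_AH_2$, and unwinding the identification shows this isomorphism inverts $\rho$. Conversely, if $\rho$ is an isomorphism then $\Gamma$ is exactly the Bass--Serre tree of $H_1*_AH_2$, hence a tree by the normal form theorem. This proves both directions.

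\emph{Main difficulty.} The genuinely delicate part is bookkeeping: fixing left/right coset conventions so that $I_{H_i}^G/I_A^G$ is correctly identified with $\ker\bigl(k[G/A]\to k[G/H_i]\bigr)$, and checking the hypotheses of the Bass--Serre structure theorem, namely connectedness of $\Gamma$, absence of inversions, and that the relevant stabilizers are exactly $A$, $H_1$, $H_2$ (and not larger subgroups). Alternatively one can bypass Bass--Serre theory and argue directly with reduced words in $H_1*_AH_2$, which is the route of Lewin's original proof.
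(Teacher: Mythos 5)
Your proof is correct, but it is genuinely different from what the paper does: the paper gives no argument at all and simply cites Lewin's Theorem~1, whose original proof proceeds by a direct combinatorial analysis of normal forms in $H_1*_AH_2$ inside $k[G]$. Your route instead identifies $\bigl(I_{H_1}^G\cap I_{H_2}^G\bigr)/I_A^G$ with the cycle space of the coset graph $\Gamma$ and invokes the Bass--Serre structure theorem for an action with a segment as fundamental domain. The reduction to $\ker\delta$ is right (note that, since $A=H_1\cap H_2$, the assignment $gA\mapsto(gH_1,gH_2)$ is injective, so $\Gamma$ has no parallel edges, though the argument would survive them), the absence of inversions follows from the preserved bipartition as you say, and the converse direction via the Bass--Serre tree of $H_1*_AH_2$ is standard. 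What your approach buys is conceptual transparency: the condition $\ker\delta=0$ is visibly equivalent to $H_1(\Gamma;\Z)=0$, which explains at a glance the remark in the paper that the criterion is independent of the non-trivial commutative ring $k$ --- a point Lewin's word-by-word argument over $\Z$ makes less obvious --- and it generalizes immediately to arbitrary graphs of groups. What Lewin's proof buys is that it is elementary and self-contained, requiring no tree machinery. One small wording quibble: $\ker\delta$ equals the cycle space $Z_1(\Gamma;k)=H_1(\Gamma;k)$ whether or not $\Gamma$ is connected; connectedness (which indeed follows from $\langle H_1,H_2\rangle=G$) is what you need to pass from ``forest'' to ``tree'' and to have a single segment as fundamental domain, not to compute $\ker\delta$.
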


\begin{proof} This is  \cite[Theorem 1]{Le70}, which is proved for $k=\Z$ but the proof works over an arbitrary nonzero commutative ring $k$.
\end{proof}

\subsection{On convergence of Sylvester  rank functions}
 Let $R$  be a ring. A {\bf Sylvester matrix rank function} $\rk$ on $R$ is a function that assigns a non-negative real number to each matrix over $R$ and satisfies the following conditions.
 \begin{enumerate}
\item [(SMat1)] $\rk(M)=0$ if $M$ is any zero matrix and $\rk(1)=1$ (where 1 denotes the identity matrix of size one);
\item [(SMat2)]  $\rk(M_1M_2) \le \min\{\rk(M_1), \rk(M_2)\}$ for any matrices $M_1$ and $M_2$ which can be multiplied;
\item[(SMat3)] $\rk\left (\begin{array}{cc} M_1 & 0\\ 0 & M_2\end{array}\right ) = \rk(M_1) + \rk(M_2)$ for any matrices $M_1$ and $M_2$;
\item[(SMat4)] $\rk \left (\begin{array}{cc} M_1 & M_3\\ 0 & M_2\end{array}\right ) \ge \rk(M_1) + \rk(M_2)$ for any matrices $M_1$, $M_2$ and $M_3$ of appropriate sizes.
\end{enumerate}
We denote by $\mathbb{P}(R)$ the set of Sylvester matrix rank functions on $R$, which is a compact convex subset of the space of functions on matrices over $R$ considered with pointwise convergence.  

Many  problems can be reinterpreted in terms of convergence in $\mathbb P(R)$. For example, if $G$ is group and $G\ge G_1\ge G_2 \ge \ldots$ is a chain of normal subgroups of $G$ of finite index with trivial intersection, then the fact of the existence of the L\"uck approximation over  $\Q$ and  $\CC$ can be viewed as    the convergence of $\rk_{G/G_i}$ to $\rk_G$ in $\mathbb P(\Q[G])$ and $\mathbb P(\CC[G])$ respectively (see \cite{Lu94, Ja19} for details).

An alternative way to introduce Sylvester rank functions is via Sylvester module rank functions.
 A {\bf Sylvester module rank function} $\dim$ on $R$ is a function that   assigns a non-negative real number to each finitely presented $R$-module   and satisfies the following conditions.
  \begin{enumerate}
\item [(SMod1)] $\dim \{0\} =0$, $\dim R =1$;
\item [(SMod2)]  $\dim(M_1\oplus M_2)=\dim M_1+\dim M_2$;
\item[(SMod3)] if $M_1\to M_2\to M_3\to 0$ is exact then
$$\dim M_1+\dim M_3\ge \dim M_2\ge \dim M_3.$$
\end{enumerate}
By \cite[Theorem 4]{Ma80} (see also \cite[Proposition 1.2.8]{LAthesis}), there exists  a natural bijection between Sylvester matrix and module rank functions over a ring. Given a Sylvester module rank function $\dim$ on $R$ and a finitely presented $R$-module $M\cong R^n/R^mA$ 
($A$ is a matrix over $R$), we define the corresponding Sylvester matrix  rank function $\rk$ by means of $\rk (A)=n-\dim M$. 

By a recent result of Li \cite{Li19}, any Sylvester module rank function $\dim$ on $R$ can be extended to a unique function (satisfying some natural conditions) on    arbitrary modules over $R$.  We will call this extension, the {\bf extended Sylvester module rank function} and denote it also by $\dim$. In this paper we will mostly use this extension for   finitely generated modules $M$. In this case $\dim M$ is defined as 
\begin{equation} \label{liext}
\dim M=\inf \{\dim \widetilde M:\ \widetilde M \textrm{\ is   finitely presented   and\ } M \textrm{\ is a quotient of\ }\widetilde M\}.\end{equation}
In the case of an arbitrary $R$-module $M$ the formula for $\dim M$ is more  complex. In this case we put
$$\dim M=\sup _{M_1} \inf _{M_2}( \dim M_2-\dim (M_2/M_1)),$$ where $M_1\le M_2$ are finitely generated $R$-submodules of $M$. Observe that we allow $+\infty$ to be a value of $\dim M$.
\begin{rem}\label{limitdim}
If $\rk$, $\rk_i\in \mathbb{P}(R)$  $(i\in \N)$ are Sylvester matrix rank functions corresponding to Sylvester module rank functions $\dim$,  $\dim_i$, respectively, then $\rk=\displaystyle \lim_{i\to \infty} \rk_i$ in the space $\mathbb{P}(R)$ if and only if for any finitely presented $R$-module $M$, $\dim M=\displaystyle  \lim_{i\to \infty} \dim_i M$. 
\end{rem}
However, the existence of the limit $\rk=\displaystyle \lim_{i\to \infty} \rk_i$ does not imply that $\dim M=\displaystyle  \lim_{i\to \infty} \dim_i M$ for any finitely generated $R$-module $M$. This phenomenon  is well-known. For example, it explains why   the L\"uck approximation of the first $L^2$-Betti numbers is valid for finitely presented groups but not always valid for finitely generated groups (see \cite{LO11}).

 If $M$ is a finitely generated $R$-module, we  only always have that  
 \begin{equation}\label{easyin}
 \dim M \ge   \limsup_{i\to \infty} \dim_i M.
 \end{equation}
 Indeed, let $\mathcal F$ be the set of all finitely presented $R$-modules $\tilde M$ such that $M$ is a quotient of $\tilde M$. Then  
\begin{multline*}
\dim M=\inf_{\tilde M\in \mathcal F}\dim \tilde M=\inf_{\tilde M\in \mathcal F}\lim_{i\to \infty} \dim_i \tilde M\ge\\ \limsup_{i\to \infty} \inf_{\tilde M\in \mathcal F}\dim_i \tilde M= \limsup_{i\to \infty} \dim_i M.\end{multline*}
In this subsection we will explain how to overcome this problem in some situations. For two Sylvester rank functions $\rk_1$ and $\rk_2\in \mathbb P(R)$ we write $\rk_1\ge \rk_2$ if $\rk_1(A)\ge \rk_2(A)$ for any matrix $A$ over $R$. If $\dim_1$ and $\dim_2$ are the Sylvester module rank functions on $R$ corresponding to $\rk_1$ and $\rk_2$, then the condition $\rk_1\ge \rk_2$ is equivalent to the condition $\dim_1\le \dim_2$, meaning that $\dim_1 M\le \dim_2 M$ for any finitely presented $R$-module $M$.

\begin{pro} \label{limitfg}Let $R$ be a ring and let $\rk$, $\rk_i\in \mathbb{P}(R)$ $(i\in \N)$ be  Sylvester matrix rank functions on $R$ corresponding to (extended) Sylvester module rank functions $\dim$,  $\dim_i$ respectively. Assume that $\rk=\displaystyle \lim_{i\to \infty} \rk_i$ and for all $i$, $\rk\ge \rk_i$. Then,  for any finitely generated $R$-module $M$, $$\dim M=\displaystyle  \lim_{i\to \infty} \dim_i M.$$
\end{pro}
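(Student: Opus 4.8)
The plan is to obtain the equality $\dim M=\lim_{i\to\infty}\dim_i M$ by proving the two inequalities $\dim M\ge \limsup_{i\to\infty}\dim_i M$ and $\dim M\le \liminf_{i\to\infty}\dim_i M$; together they pinch $\liminf$ and $\limsup$ to a common value, which is $\dim M$. The first inequality requires nothing new: it is exactly (\ref{easyin}), which is valid for every finitely generated $R$-module $M$ as soon as $\rk=\lim_{i\to\infty}\rk_i$ (its proof uses Remark \ref{limitdim} to replace, on a finitely presented approximation $\tilde M$, the quantity $\dim\tilde M$ by $\lim_{i\to\infty}\dim_i\tilde M$). So the real work is the second inequality, and that is where the extra hypothesis $\rk\ge\rk_i$ enters.

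For the second inequality I would first translate $\rk\ge\rk_i$ into the module-theoretic statement $\dim\le\dim_i$ on finitely presented modules, i.e. $\dim\tilde M\le\dim_i\tilde M$ for every finitely presented $R$-module $\tilde M$; this equivalence is recorded immediately before the statement of the proposition. Now fix a finitely generated module $M$ and let $\mathcal F$ be the family of finitely presented modules $\tilde M$ that surject onto $M$. Applying the defining formula (\ref{liext}) for the extended Sylvester module rank function to $\dim$ and, simultaneously, to each $\dim_i$, and using $\dim\tilde M\le\dim_i\tilde M$ term by term over the common index set $\mathcal F$, I get
$$\dim M=\inf_{\tilde M\in\mathcal F}\dim\tilde M\le\inf_{\tilde M\in\mathcal F}\dim_i\tilde M=\dim_i M$$
for every $i$, hence $\dim M\le\liminf_{i\to\infty}\dim_i M$. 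Combining this with (\ref{easyin}) yields
$$\limsup_{i\to\infty}\dim_i M\le\dim M\le\liminf_{i\to\infty}\dim_i M,$$
so all three are equal and $\lim_{i\to\infty}\dim_i M$ exists and equals $\dim M$.

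I do not expect a serious obstacle: the argument is a bookkeeping synthesis of two facts already in hand, with $\rk=\lim_{i\to\infty}\rk_i$ supplying one direction and $\rk\ge\rk_i$ the other. The only point that deserves a moment's attention is that the infimum formula (\ref{liext}) is equally legitimate for each $\dim_i$ as for $\dim$, so that the same family $\mathcal F$ of finitely presented approximations can be used for all of them at once; this is automatic, since by hypothesis every $\dim_i$ is an extended Sylvester module rank function and $M$, being finitely generated, is governed by (\ref{liext}) with respect to all of them.
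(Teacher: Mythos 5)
Your proposal is correct and follows essentially the same route as the paper: both directions rest on (\ref{easyin}) together with the transfer of the monotonicity $\dim\le\dim_i$ from finitely presented to finitely generated modules via the infimum formula (\ref{liext}). The only (cosmetic) difference is that you obtain the pointwise inequality $\dim M\le\dim_i M$ for every $i$ by comparing the two infima over the same family $\mathcal F$ directly, whereas the paper reaches the weaker conclusion $\liminf_{i\to\infty}\dim_i M\ge\dim M$ through an $\varepsilon$-approximation argument; both suffice.
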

\begin{proof} 
Fix $\varepsilon >0$. Let $k$ be such that
$$\liminf_{i\to \infty} \dim_i M\ge \dim_k M-\varepsilon .$$
There exists a finitely presented $R$-module  $\tilde M$ such that $M$ is a quotient of $\tilde M$ and $\dim_kM\ge \dim_k \tilde M-\varepsilon $. Since $\rk\ge \rk_k$, we have that $\dim_k \tilde M\ge \dim \tilde M$. Thus, we obtain
$$\liminf_{i\to \infty} \dim_i M\ge \dim_k M-\varepsilon \ge  \dim_k \tilde M-2\varepsilon \ge  \dim \tilde M-2\varepsilon \ge  \dim M-2\varepsilon .$$
Since $\varepsilon $ is arbitrary, we conclude that $\displaystyle \liminf_{i\to \infty} \dim_i M\ge   \dim M$. In view of (\ref{easyin}), we are done.
\end{proof} 

\subsection{Epic division $R$-rings}

Let $R$ be a ring. An {\bf epic} division $R$-ring is a $R$-ring $\phi: R\to \D$ where $\D$ is a division ring generated by $\phi(R)$.  
Moreover, we say that  $\D$ is a {\bf division $R$-ring of fractions} if $\phi$ is injective. In this case we will normally omit $\phi$ and see $R$ as a subring of $\D$.

Each epic division $R$-ring $\D$ induces a Sylvester module rank function $\dim_{\D}$ on $R$: for every a finitely  presented $R$-module $M$ we define $\dim_{\D}M$ to be equal to the dimension of $\D\otimes_R M$ as a $\D$-module. 

The extended Sylvester rank function $\dim_{\D}$ is calculated in the same way: for an $R$-module $M$, $\dim_{\D}M$ is equal to the dimension of $\D\otimes_R M$ as a $\D$-module.

 We will use $\dim_{\D}$ for the (extended) Sylvester module rank function on $R$ and for the $\D$-dimension of $\D$-spaces. This is a coherent notation because, since $\D$ is epic, $\D\otimes_R\D$ is isomorphic to $\D$ as $R$-bimodule (see \cite[Section 4]{Jasurv19}). 
 
The following result of   P. M. Cohn will be  used several times in the paper.
\begin{pro}\cite[Theorem 4.4.1]{Cohfields} \label{cohndiv} Two epic division $R$-rings $\D_1$ and $\D_2$ are isomorphic as $R$-rings if and only if $\dim_{\D_1}M=\dim_{\D_2}M$ for every finitely presented $R$-module $M$. \end{pro}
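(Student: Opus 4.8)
The forward implication is immediate: an $R$-ring isomorphism $\alpha\colon\D_1\to\D_2$ transports the $\D_1$-module $\D_1\otimes_RM$ to the $\D_2$-module $\D_2\otimes_RM$ for every finitely presented $R$-module $M$, preserving dimension, so $\dim_{\D_1}M=\dim_{\D_2}M$. The substance is the converse, and the plan is to recover $\D_i$ from $\dim_{\D_i}$ via Cohn's theory of localization at a prime matrix ideal.

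First I would pass from module rank functions to matrix rank functions: by the bijection of \cite{Ma80} between Sylvester module and Sylvester matrix rank functions, the hypothesis that $\dim_{\D_1}M=\dim_{\D_2}M$ for all finitely presented $M$ is equivalent to $\rk_{\D_1}=\rk_{\D_2}$ on every matrix over $R$, where $\rk_{\D_i}(A)$ denotes the rank over $\D_i$ of the image of $A$. Next I would use the elementary fact that over a division ring a square matrix is invertible if and only if it has full rank; hence the set $\Sigma$ of square matrices over $R$ whose image is invertible over $\D_i$ — equivalently, the singular kernel $\mathcal P_i=\{A\ \text{square over}\ R:\ \rk_{\D_i}(A)<\text{size}(A)\}$ — is the same for $i=1,2$. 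Call this common datum $\Sigma$ (resp. $\mathcal P$). Then the homomorphism $R\to\D_i$ inverts every matrix in $\Sigma$, so it factors through the universal localization $R_\Sigma$, giving homomorphisms $\psi_i\colon R_\Sigma\to\D_i$; these are surjective because, $\D_i$ being a division ring, the rational closure of $R$ in $\D_i$ — which is the image of $\psi_i$ — is already all of $\D_i$.

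Finally I would invoke the key structural input from \cite{Cohfields}: when $\D_i$ is a division ring, $R_\Sigma$ is a local ring, i.e. its non-units form a two-sided ideal $\mathfrak m$ with $R_\Sigma/\mathfrak m$ a division ring, and $\ker\psi_i$, being a maximal ideal of the local ring $R_\Sigma$, must equal $\mathfrak m$. Thus $\psi_i$ induces an $R$-ring isomorphism $\D_i\cong R_\Sigma/\mathfrak m$, and composing these yields the desired $R$-ring isomorphism $\D_1\cong\D_2$. The main obstacle is precisely this last step: that knowing merely which square matrices over $R$ become invertible in $\D$ (equivalently, the singular kernel $\mathcal P_\D$) suffices to reconstruct $\D$ up to $R$-ring isomorphism. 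This is the heart of Cohn's theory of epic division $R$-rings and prime matrix ideals, and here I would simply cite \cite[Theorem 4.4.1]{Cohfields}: the singular kernel of an epic division $R$-ring is a prime matrix ideal, and the assignment $\D\mapsto\mathcal P_\D$ is a bijection onto the set of prime matrix ideals of $R$, with inverse given by the residue-division-ring construction above. Everything preceding that step — the reduction to matrix rank functions, the identification of $\Sigma$, and the factorization through $R_\Sigma$ — is routine.
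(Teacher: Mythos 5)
Your proposal is correct, but note that the paper offers no proof of this statement at all: it is stated as a direct citation of Cohn's Theorem 4.4.1, so there is no ``paper's approach'' to compare against beyond the citation itself. Your sketch is an accurate reconstruction of the standard argument: the forward direction is formal; the reduction from module rank functions to matrix rank functions via Malcolmson's bijection is exactly the correspondence $\rk(A)=n-\dim(R^n/R^mA)$ already recorded in the paper; and the identification of the common singular kernel, the factorization through the universal localization $R_\Sigma$, the surjectivity of $\psi_i$ onto the rational closure (which equals $\D_i$ by epicness), and the local-ring argument forcing $\ker\psi_i=\mathfrak m$ are all sound. You are right that the entire weight rests on the structural fact that $R_\Sigma$ is local with residue division ring when $\Sigma$ is the complement of a prime matrix ideal, and that the singular kernel therefore determines the epic division $R$-ring up to $R$-isomorphism; since that is precisely the content of the theorem the paper cites, deferring to Cohn there is no less than what the paper itself does, and your write-up has the merit of making explicit which part is routine and which part is Cohn's.
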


\subsection{Natural extensions of Sylvester rank functions}
Let $G$ be a group with trivial element $e$. We say that a ring $R$ is {\bf $G$-graded} if $R$ is equal to the direct sum $\oplus_{g\in G} R_g$ and $R_gR_h\subseteq R_{gh}$ for all $g$ and $h$  in $G$.  If for each $g\in G$, $R_g$ contains an invertible element $u_g$, then we say that $R$ is a {\bf crossed product} of $R_e$ and $G$ and we will write $R=S*G$ if $R_e= S$. 

Let $R=S*G$ be a crossed product. Let $\rk$ be a Sylvester matrix rank function on   $S$ and $\dim$ its associated   Sylvester module rank function. We say that $\rk$ (and $\dim$)  are {\bf $R$-compatible} if for any $g\in G$ and any matrix $A$ over $S$, $\rk(A)=\rk(u_gAu_g^{-1})$.  If $G$ is finite and $M$ is a finitely presented $R$-module, then $M$ is also  a finitely presented  $S$-module. Thus, if  $\dim$ is $R$-compatible,  we can define
\begin{equation}\label{natural}
\widetilde{\dim }\ M=\frac{ \dim M}{|G|},\end{equation}  where   $M$ is a finitely presented  $R$-module.
This defines a   Sylvester module rank function on $R$, called {\bf the natural extension} of $\dim$. This notion was  studied, for example, in  \cite{JL21}.  We notice that the same formula (\ref{natural}) holds also for extended Sylvester module rank functions (that is, when $M$ is an arbitrary $R$-module).
In this subsection we prove the following result.

\begin{pro}\label{carnatural} Let $R=S*G$ be a crossed product with $G$ finite and let $R\hookrightarrow \D$ be a division  $R$-ring of fractions. Denote by $\D_e$ the division closure of $S$ in $\D$. Then the following are equivalent.

\begin{enumerate}
\item[(a)] $\widetilde{\dim_{\D_e}}=\dim_{\D}$ as Sylvester functions on $R$.
\item [(b)] $\dim_{\D_e} \D= |G|$.
\item[(c)]$\D$ is isomorphic to a crossed product $\D_e*G$.
\item[(d)] $\D$ is isomorphic to $\D_e\otimes_ S R$ as $(\D_e , R)$-bimodule.
\item [(e)]  $\D$ is isomorphic to $R\otimes_ S \D_e$ as $(  R,\D_e)$-bimodule.
\end{enumerate}

\end{pro}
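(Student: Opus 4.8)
The plan is to prove the cycle of implications (a) $\Rightarrow$ (b) $\Rightarrow$ (c) $\Rightarrow$ (d) $\Leftrightarrow$ (e) $\Rightarrow$ (a), using the characterization of epic division $R$-rings via Sylvester module rank functions (Proposition \ref{cohndiv}) as the main tool for the implication that closes the loop. Throughout I will write $n=|G|$ and identify $\D_e$ with the division closure of $S=R_e$ in $\D$; note $\D_e$ is an epic division $S$-ring of fractions, hence $\dim_{\D_e}$ is a well-defined (extended) Sylvester module rank function on $S$, and since $\dim_{\D}$ restricted to $S$-modules is $S$-compatible (conjugation by $u_g$ is an inner-type symmetry of $\D$), so is $\dim_{\D_e}$; this is what makes $\widetilde{\dim_{\D_e}}$ defined in the first place.

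First I would do (a) $\Leftrightarrow$ (b): applying the defining formula (\ref{natural}) to the $R$-module $M=\D$ gives $\widetilde{\dim_{\D_e}}\,\D = (\dim_{\D_e}\D)/n$, while $\dim_{\D}\D = 1$; so if $\widetilde{\dim_{\D_e}}=\dim_{\D}$ as functions on $R$ then evaluating at $\D$ (viewed as a quotient of a free $R$-module, using that $\D$ is finitely generated over $R$ — indeed $\D = \sum_g u_g \D_e$ once we know any of the structural statements, but to get started we only need $\D$ finitely generated over $S$, which follows since $\D_e\otimes_S R$ surjects onto it) yields $\dim_{\D_e}\D = n$, i.e. (b); the converse direction (b) $\Rightarrow$ (a) I would defer, getting it for free at the end of the cycle. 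For (b) $\Rightarrow$ (c): the multiplication map $\D_e\otimes_S R \to \D$ is a surjection of $(\D_e,R)$-bimodules, and the left-hand side is a free left $\D_e$-module of rank $n$ (basis $\{1\otimes u_g\}_{g\in G}$); the image is a $\D_e$-subspace of $\D$ of dimension $n=\dim_{\D_e}\D$, so the map is an isomorphism, which simultaneously gives (d). Now $\D = \bigoplus_{g\in G}\D_e u_g$ as a left $\D_e$-module, each $\D_e u_g$ contains the invertible element $u_g$, and $(\D_e u_g)(\D_e u_h)\subseteq \D_e u_{gh}$ because $u_g\D_e u_g^{-1}=\D_e$ (conjugation by $u_g$ preserves $S$, hence its division closure $\D_e$); this exhibits $\D$ as a crossed product $\D_e * G$, giving (c), and (c) $\Rightarrow$ (d) and (d) $\Leftrightarrow$ (e) are then the standard symmetric description of a crossed product as $\D_e\otimes_S R\cong R\otimes_S\D_e$.

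Finally, the implication that closes the loop, (c) (equivalently (d)) $\Rightarrow$ (a), is where the real content sits and where I expect the main obstacle. Given $\D\cong \D_e\otimes_S R$ as $(\D_e,R)$-bimodules, for any finitely presented $R$-module $M$ we get $\D\otimes_R M \cong \D_e\otimes_S M$ as $\D_e$-modules (associativity of tensor product), hence $\dim_{\D}M = \dim_{\D_e}(\D_e\otimes_S M)$ computed as a $\D_e$-dimension. The point is to identify the right-hand side with $(\dim_{\D_e}M)/n$, where on the right $\dim_{\D_e}M$ means the $S$-module rank function applied to $M$ regarded as an $S$-module (which is finitely presented over $S$ since $G$ is finite and $R$ is free of rank $n$ over $S$). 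This is exactly the behaviour of the natural extension: $\widetilde{\dim_{\D_e}}M = (\dim_{\D_e}M)/n$ by definition (\ref{natural}), so I need $\dim_{\D_e}(\D_e\otimes_S M) = (\dim_{\D_e}M)/n$ where the left side is $\D_e$-dimension and the right side is the $S$-Sylvester rank; but $\D_e$ is an epic division $S$-ring of fractions, so $\dim_{\D_e}M$ as an $S$-Sylvester function already equals the $\D_e$-dimension of $\D_e\otimes_S M$, and there is no division by $n$ — so the two notations must be carefully disambiguated. The resolution is that $\dim_{\D_e}M$ appearing inside the definition of $\widetilde{\dim_{\D_e}}$ is the dimension of $\D_e\otimes_S M$ where $M$ is viewed over $S$, and this is precisely $\dim_{\D}M$ computed above via the bimodule isomorphism; so one obtains $\dim_{\D}M = \dim_{\D_e}M$ (as $S$-functions, both equal to $\dim_{\D_e}(\D_e\otimes_S M)$), and then $\widetilde{\dim_{\D_e}}M = (\dim_{\D}M)\cdot n / n = \dim_{\D}M$ after checking the bookkeeping of how $\dim_{\D}M$ over $R$ relates to $\dim_{\D}M$ over $S$, namely that $\dim_S$-rank equals $n$ times $\dim_R$-rank for modules over the crossed product — which is itself the content of $R$-compatibility plus $[R:S]=n$. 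The careful tracking of these two normalizations (dimension over $R$ versus over $S$, and the factor $n$) is the delicate part; once it is pinned down, (a) follows for all finitely presented $M$, and hence as Sylvester functions on $R$, completing the cycle. Alternatively, and perhaps more cleanly, I would invoke Proposition \ref{cohndiv}: both $\widetilde{\dim_{\D_e}}$ and $\dim_{\D}$ are Sylvester module rank functions on $R$ arising from epic division $R$-rings — $\dim_{\D}$ from $\D$ itself, and $\widetilde{\dim_{\D_e}}$ from the crossed product $\D_e * G$ constructed in (c), which is a division ring containing and generated by $R$ — and they agree on the generating data, so they are isomorphic, giving (a).
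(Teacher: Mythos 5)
Your architecture is the same as the paper's: a crossed product $\D_e * G \cong \D_e\otimes_S R$ mapping onto $\D$, a dimension count for (b)$\Rightarrow$(c), evaluation at $M=\D$ for (a)$\Rightarrow$(b), and opposite rings for (e). However, the pivotal implication (d)$\Rightarrow$(a) is not actually closed, and you say so yourself. Your starting identity $\dim_{\D}M=\dim_{\D_e}(\D_e\otimes_S M)$ is off by a factor of $n=|G|$ (take $M=R$: the left side is $1$, the right side is $n$), and your attempted repair asserts ``$\dim_{\D}M=\dim_{\D_e}M$ as $S$-functions,'' which is equally false. The correct chain, which is what the paper writes, is
\[
\widetilde{\dim_{\D_e}}\,M=\tfrac{1}{n}\dim_{\D_e}(\D_e\otimes_S M)=\tfrac{1}{n}\dim_{\D_e}\bigl((\D_e\otimes_S R)\otimes_R M\bigr)=\tfrac{1}{n}\dim_{\D_e}(\D\otimes_R M)=\dim_{\D}M,
\]
where the one nontrivial input is that $\dim_{\D_e}V=n\cdot\dim_{\D}V$ for every $\D$-module $V$, because (d) makes $\D$ an $n$-dimensional left $\D_e$-space. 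You gesture at this (``$\dim_S$-rank equals $n$ times $\dim_R$-rank'') but attribute it to ``$R$-compatibility plus $[R:S]=n$'' rather than extracting $\dim_{\D_e}\D=n$ from (d), and you explicitly leave the bookkeeping unresolved. The fallback via Proposition \ref{cohndiv} is circular: to use it you must first know that $\widetilde{\dim_{\D_e}}$ \emph{is} the rank function induced by the epic division $R$-ring $\D_e * G$, and verifying that is precisely the computation above; moreover ``agreeing on the generating data'' is not the hypothesis of Proposition \ref{cohndiv}, which requires agreement on all finitely presented modules.

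A second, smaller gap: you repeatedly assert that the multiplication map $\D_e\otimes_S R\to\D$ is surjective. Its image is $\sum_{g}\D_e u_g$, a subring of $\D$ containing $R$, and surjectivity is not automatic; it holds because this image is a domain of finite left dimension over $\D_e$, hence a division subring, and $\D$ is generated by $R$ as a division ring. The paper supplies exactly this argument for its map $\gamma\colon T\to\D$, and you need it both for (b)$\Rightarrow$(c) and for your treatment of (a)$\Rightarrow$(b). Relatedly, the surjection gives finite generation of $\D$ over $\D_e$, not over $S$; what legitimizes evaluating $\widetilde{\dim_{\D_e}}$ at the (possibly non--finitely presented) module $\D$ is the paper's remark that formula (\ref{natural}) persists for the extended rank functions, together with $\D_e\otimes_S\D\cong\D_e^{(\dim_{\D_e}\D)}$ coming from epicness of $\D_e$ over $S$.
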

\begin{proof} For any $h,g\in G$, define $\alpha(g,h)=u_gu_h(u_{gh})^{-1}\in R$. Since the conjugation by $u_g$ fixes $S$, it also fixes $\D_e$. Therefore, we can define a ring structure on $T=\oplus_{g\in G}\D_ev_g$ defining the multiplication on homogenous elements by means of
$$ (d_1v_g)\cdot (d_2v_h)=(d_1u_gd_2(u_g)^{-1}\alpha(g,h))v_{gh}, \ d_1,d_2\in \D_e, \ g,h\in G.$$
It is clear that $T=\D_e*G$ is a crossed product, it contains  $R$ as a subring, and $T$ is isomorphic to $\D_e\otimes_ S R$ as $(\D_e , R)$-bimodule.

There exists a natural map $\gamma: T\to \D$, sending $\sum_{g\in G} d_gv_g$ ($d_g\in D_e$) to $\sum_{g\in G} d_gu_g$. Observe that $\gamma(T)$ is a domain and of finite dimension over $\D_e$. Thus, $\gamma(T)$ is a division subring of $\D$. Since $T$ contains $R$, $\D=\gamma(T)$.  This implies that (c) and (d) are equivalent. 

Since $\dim_{D_e} T=|G|$,  (b) implies that $\gamma$ is an isomorphism, and so, (b) implies (c).

Now, let us assume (d). Let $M$ be a finitely presented  $R$-module. We have the following.
\begin{multline*}
\widetilde{\dim_{\D_e}}\ M= \frac{  \dim_{\D_e}(\D_e\otimes _S M)}{|G|}=
\frac{\dim_{\D_e}(\D_e\otimes_S(R\otimes _R M))}{|G|}=\\
\frac{\dim_{\D_e}((\D_e\otimes_SR)\otimes _R M))}{|G|}\myeq{(d)}
\frac{\dim_{\D_e}(\D\otimes_R M)}{|G|}=\\ \dim_{\D} (\D\otimes_R M)=\dim_{\D} M.\end{multline*}
This proves (a).

Now, we assume that (a) holds. Since $\D_e$ es an epic $S$-ring $\D_e\otimes_S \D_e$ is isomorphic to $\D_e$ as $\D_e$-bimodule and by the same reason, $\D \otimes_R \D $ is isomorphic to $\D $ as $\D $-bimodule. Consider $M=\D$ as an $R$-module and $N=\D_e$ as a $S$-module. Then 
\begin{multline*}1=\dim_{\D}(\D\otimes_R M)=\dim_{\D} M\myeq{(a)}\widetilde{\dim_{\D_e}} M=\\
\frac{\dim_{\D_e}(\D_e\otimes_S M)}{|G|}=\frac{\dim_{\D_e}(\D_e\otimes_S(N^{\dim_{\D_e} \D}))}{|G|}=\frac{\dim_{\D_e} \D}{|G|}
\end{multline*}
This implies (b).

Let $R^{op}$ denotes the opposite ring, that is the ring with the same elements and addition operation, but with the multiplication performed in the reverse order. Then $R^{op}\cong S^{op}*G$ and the condition (c) is equivalent to
\begin{enumerate}
 \item[(c')] $\D^{op}$ is isomorphic to a crossed product $(\D_e)^{op}*G$.
 \end{enumerate}
Our previous proof gives that (c') is equivalent to  
\begin{enumerate}
 \item[(d')] $\D^{op}$ is isomorphic to $(\D_e)^{op}\otimes_ {S^{op}}R^{op}$ as $((\D_e)^{op} , R^{op})$-bimodule.
  \end{enumerate}
Now, observe  that (d') is equivalent to (e).
\end{proof}

\section{On mod-$p$ $L^2$-Betti numbers of subgroups of a free pro-$p$ groups}\label{betti}
\subsection{Universal division ring of fractions}\label{subsect:universa}

Given  two epic division $R$-rings  $\D_1$ and $\D_2$ the condition  $\dim_{\D_1}\le \dim_{\D_2}$   is equivalent to the existence of a specialization from $\D_1$ to $\D_2$ in the sense of P. Cohn (\cite[Subsection 4.1]{Cohfields}). We say that an epic division $R$-ring $\D$ is {\bf universal} if for every  division $R$-ring  $\mathcal E$, $\dim_{\D}\le \dim_{\mathcal E}$.
If a universal epic division $R$-ring exists, it is unique up to $R$-isomorphism. We will denote it by $\D_R$ and instead of $\dim_{\D_R}$ we will simply  write $\dim_R$.

We say that a ring $R$ is a {\bf semifir} if every finitely generated   left ideal of $R$ is free and free modules of distinct finite rank are non-isomorphic.  For example, if $K$ is a field, the ring $K\langle \! \langle  X\rangle \! \rangle  $ of non-commutative power series is a semifir (\cite[Theorem 2.9.4]{Cohfree}). By a theorem of  P. M. Cohn \cite{Coh74} a semifir $R$ has a universal division $R$-ring.  P. M. Cohn proved that in this case $\D_R$ can be obtained from $R$  by formally inverting all full matrices over $R$. In particular, this implies the following result.

\begin{pro} \label{internaldim}
Let $R$ be a semifir. Then
 $\dim_R$ is the smallest Sylvester module rank function among all the Sylvester module rank functions on $R$. 
 \end{pro}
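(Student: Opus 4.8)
The plan is to argue entirely in terms of Sylvester \emph{matrix} rank functions and then translate back via the bijection with module rank functions recalled above (from \cite{Ma80}, see also \cite[Proposition 1.2.8]{LAthesis}). Since $\dim_1\le \dim_2$ is equivalent to $\rk_1\ge \rk_2$ for the associated matrix rank functions, the statement that $\dim_R=\dim_{\D_R}$ is the smallest Sylvester module rank function on $R$ is equivalent to the inequality $\rk_{\D_R}(A)\ge \rk(A)$ for every matrix $A$ over $R$ and every $\rk\in\mathbb P(R)$, which is what I would establish.

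For an $m\times n$ matrix $A$ over $R$ write $\rho(A)$ for its \textbf{inner rank}, i.e.\ the least $r$ such that $A=BC$ with $B\in\Mat_{m\times r}(R)$ and $C\in\Mat_{r\times n}(R)$ (such a factorization always exists, e.g.\ $A=A\cdot I_n$, so $\rho(A)\le\min\{m,n\}$). The first ingredient is the identity $\rk_{\D_R}=\rho$: by Cohn's description of $\D_R$ as the ring obtained from $R$ by universally inverting all full matrices over $R$ (recalled in this subsection; see \cite{Coh74} and \cite{Cohfree}), a full square matrix over the semifir $R$ becomes invertible over $\D_R$, and $\rho$ is itself a Sylvester matrix rank function which coincides with $\rk_{\D_R}$ — this is precisely the statement that a semifir is a Sylvester domain. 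This is the one non-elementary input and I would simply invoke it.

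The remaining step is elementary. Given any $\rk\in\mathbb P(R)$, axioms (SMat1) and (SMat3) give $\rk(I_r)=r$, so by (SMat2) any matrix $B$ with $r$ columns satisfies $\rk(B)=\rk(BI_r)\le r$. Now take a factorization $A=BC$ through $R^{\rho(A)}$; then
$$\rk(A)=\rk(BC)\le\min\{\rk(B),\rk(C)\}\le\rho(A)=\rk_{\D_R}(A).$$
Hence $\rk_{\D_R}\ge\rk$ for every Sylvester matrix rank function $\rk$ on $R$, and translating back (and reversing the inequality, as noted above) yields $\dim_R\le\dim$ for every Sylvester module rank function $\dim$ on $R$; that is, $\dim_R$ is the smallest. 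The only real obstacle is the identification $\rk_{\D_R}=\rho$: the inequality $\rk_{\D_R}(A)\le\rho(A)$ is immediate from an $R$-factorization through $R^{\rho(A)}$, but the reverse inequality is exactly Cohn's theorem, which I would cite rather than reprove.
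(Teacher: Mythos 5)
Your proposal is correct and follows essentially the same route as the paper, which simply invokes Cohn's theorem that $\D_R$ is obtained from the semifir $R$ by universally inverting all full matrices and leaves the implication implicit. You have filled in exactly the intended details: $\rk_{\D_R}$ equals the inner rank $\rho$, and any Sylvester matrix rank function is bounded above by $\rho$ via a factorization through $R^{\rho(A)}$, which dualizes to the minimality of $\dim_R$.
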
 We will need the following result.
\begin{pro}\cite[Proposition 2.2]{Jataylor19} 
\label{vanTor} Let $R$ be a semifir. Then $$\Tor^R_1(\D_R, M)=0$$ for any $R$-submodule of a $\D_R$-module.
\end{pro}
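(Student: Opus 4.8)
The plan is to reduce the assertion, by elementary homological algebra, to the single vanishing $\Tor_1^R(\D_R,\D_R)=0$, and then to reduce that in turn to an injectivity statement for a matrix over $R$ — the one point where the semifir hypothesis and Cohn's construction of $\D_R$ are genuinely used. To begin, observe that since $R$ is a semifir every finitely generated left ideal is free, hence flat, and so (being directed unions of finitely generated ones) \emph{all} left ideals of $R$ are flat; an easy induction on $n$ — for $K\le R^{n+1}$, projecting onto the last coordinate gives $0\to K\cap R^{n}\to K\to I\to 0$ with $I\le R$ a left ideal — then shows every submodule of a free $R$-module is flat (the infinite-rank case following by taking directed unions), i.e. $\Tor_2^R(-,-)=0$. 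Consequently the class $\mathcal C$ of $R$-modules $N$ with $\Tor_1^R(\D_R,N)=0$ is closed under submodules: for $0\to N'\to N\to N''\to 0$ the exact sequence $\Tor_2^R(\D_R,N'')\to\Tor_1^R(\D_R,N')\to\Tor_1^R(\D_R,N)$ forces $N'\in\mathcal C$ whenever $N\in\mathcal C$. Since $\D_R$ is a division ring, every $\D_R$-module is a direct sum of copies of $\D_R$ and $\Tor_1$ commutes with direct sums, so every $\D_R$-module — and therefore every $R$-submodule of a $\D_R$-module — lies in $\mathcal C$ as soon as $\D_R$ itself does.

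It thus remains to show $\Tor_1^R(\D_R,\D_R)=0$. Write $\D_R$ as the directed union of its finitely generated $R$-submodules $M_\alpha$; since $\Tor_1$ commutes with direct limits it suffices to treat one $M_\alpha$. Choose a surjection $R^b\twoheadrightarrow M_\alpha$ with kernel $K=\ker(R^b\to\D_R)$, so that $\Tor_1^R(\D_R,M_\alpha)\cong\ker(\D_R\otimes_R K\to\D_R^b)$. Now write $K$ as the directed union of its finitely generated submodules $K_j$; each $K_j$, being a finitely generated submodule of the free module $R^b$ over the semifir $R$, is itself free, say $K_j\cong R^{c_j}$. By exactness of direct limits, $\ker(\D_R\otimes_R K\to\D_R^b)=\varinjlim_j\ker(\D_R\otimes_R K_j\to\D_R^b)$, so it is enough to prove: for every finitely generated free submodule $K_j\cong R^{c_j}$ of $K=\ker(R^b\to\D_R)$, the induced map $\D_R^{c_j}\cong\D_R\otimes_R K_j\to\D_R^b$ is injective.

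This injectivity is the heart of the matter and the step I expect to require real work; everything above is bookkeeping. It genuinely uses more than the injectivity of $R^{c_j}\hookrightarrow R^b$: over $R=k\langle x,y\rangle$ there is an $R$-linear injection $R^2\hookrightarrow R$ that does not stay injective after $\otimes_R\D_R$. One must exploit that $K_j$ lies inside $\ker(R^b\to\D_R)$, and the natural tool is Cohn's description of $\D_R$ as the universal localization of $R$ obtained by formally inverting all full matrices over $R$: using this one shows that the matrix of the inclusion $K_j\hookrightarrow R^b$ is left full over $R$, hence of rank $c_j$ over $\D_R$, hence injective over $\D_R$. Equivalently, the content of this step is that $R\to\D_R$ is a homological epimorphism. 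Granting it, the reductions above yield $\Tor_1^R(\D_R,M)=0$ for every $R$-submodule $M$ of a $\D_R$-module.
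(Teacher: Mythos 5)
Your opening reductions are sound: a semifir has weak dimension at most one, so the class of modules $N$ with $\Tor^R_1(\D_R,N)=0$ is closed under submodules, and the whole proposition does come down to $\Tor^R_1(\D_R,\D_R)=0$, hence to the injectivity of $\D_R\otimes_R K\to\D_R^b$ for $K=\ker(R^b\to\D_R)$. The proof breaks at the next step. The identity $\ker(\D_R\otimes_R K\to\D_R^b)=\varinjlim_j\ker(\D_R\otimes_R K_j\to\D_R^b)$ is correct, but the sufficient condition you then set out to establish --- that \emph{each} $\D_R\otimes_R K_j\to\D_R^b$ is injective, equivalently that the inclusion matrix of every finitely generated free $K_j\le K$ is left full --- is false. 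Take $R=k\langle x,y\rangle$, $b=2$, and the map $R^2\to\D_R$ sending $e_1\mapsto 1$, $e_2\mapsto x$. Then $K=R\cdot(-x,1)\cong R$, and $K_j=R\cdot(-x^2,x)+R\cdot(-yx,y)$ is a free submodule of $K$ of rank $2$ (since $rx+sy=0$ forces $r=s=0$) whose inclusion matrix factors as the column with entries $x,y$ times the row $(-x,\,1)$, hence has inner rank $1$; so $\D_R\otimes_R K_j\cong\D_R^2\to\D_R^2$ has a one-dimensional kernel. The vanishing of $\Tor_1$ occurs only in the limit: that kernel element dies after enlarging $K_j$ (here already inside $\D_R\otimes_R K$). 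So the statement you isolate as the heart of the matter is not merely left unproved --- it is the wrong statement, and no amount of work with full matrices will establish it.

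What is actually needed is injectivity of $\D_R\otimes_R K\to\D_R^b$ for the \emph{whole} kernel $K$ (which need not be finitely generated); this is literally $\Tor^R_1(\D_R,M_\alpha)=0$ again, so your reduction has moved the problem around rather than solved it, and the maximality of $K$ among submodules of $R^b$ annihilating the vector of generators must enter the argument --- it is exactly what my example violates. The standard route is the general theorem that a universal localization $R\to\sigma^{-1}R$ always satisfies $\Tor^R_1(\sigma^{-1}R,\sigma^{-1}R)=0$ (Schofield, \emph{Representations of rings over skew fields}, Theorem 4.7; see also Neeman--Ranicki), combined with Cohn's theorem that for a semifir $\D_R$ is the universal localization at the set of all full matrices; alternatively one can compute directly with Malcolmson's criterion. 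Either way this is a genuine theorem about the localization, not a consequence of the rank identity $\rk_{\D_R}=\rho_R$ alone, and your proposal as written does not contain it.
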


Let $G$ be a residually torsion-free  nilpotent group (for example, $G$ is a subgroup of a free pro-$p$ group).   Let $K$ be a field. Then the universal division ring of fractions $\D_{K[G]}$ exists (see \cite{Ja20universal}). It can be constructed in the following way. Since $G$ is residually torsion free nilpotent, $G$ is bi-orderable. Fix a bi-invariant order $\preceq$ on $G$. A. Malcev \cite{Ma48} and B. Neumann \cite{Ne49} (following   an idea of H. Hahn \cite{Ha07}) showed independently that    the set $K((G,\preceq ))$ of formal power series over $G$ with coefficients in $K$ having well-ordered support has a natural structure of a ring and, moreover, it is a division ring.   $\D_{K[G]}$ can be defined as  the division closure of $K[G]$ in $K((G,\preceq ))$. The universality of this division ring is shown in \cite[Theorem 1.1]{Ja20universal}.
 
 If $A$ is a torsion-free abelian group, then $\D_{K[A]}$ coincides with the classical ring of fractions $\mathcal Q(K[A])$ of $K[A]$.

  \subsection{The   division ring $\D_{\F_p[[\mathbf F]]}$}
If  $\mathbf F$ is a  free pro-$p$ group freely generated by $f_1,\ldots, f_d$,  then, by Proposition \ref{Lazard}, the continuous homomorphism $\F_p\langle \! \langle  x_1,\ldots,x_d\rangle \! \rangle  \to \F_p[[\mathbf F]]$ that sends $x_i$ to $f_1-1$, is an isomorphism. Thus, there exists a universal division 
ring of fraction   $\D_{\F_p[[\mathbf F]]}$. Using results of  \cite{Jataylor19} we establish the following formula for $\dim_{\F_p[[\mathbf F]]}$ which is one of main ingredients of  the proof of  Theorem \ref{iteratedextension}. 
\begin{pro}\label{limitprop} 
 Let $\mathbf F=\mathbf N_1>\mathbf N_2>\mathbf N_3> \ldots$ be a chain of open normal subgroups of a finitely generated free pro-$p$ group $\mathbf F$ with trivial intersection.
 Let $M$ be a finitely generated    $\F_p[[\mathbf F]]$-module. Then
$$\dim_{\F_p[[\mathbf F]]} M= \lim_{i\to \infty}\frac{\dim_{\F_p}(\F_p[ \mathbf F/\mathbf N_i]\otimes_{\F_p[[\mathbf F]]}M )}{|\mathbf F:\mathbf N_i|}.$$
\end{pro}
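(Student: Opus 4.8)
The plan is to prove this as an instance of Proposition \ref{limitfg} applied to the ring $R=\F_p[[\mathbf F]]$, with the approximating rank functions coming from the finite quotients $\F_p[\mathbf F/\mathbf N_i]$. Concretely, for each $i$ let $\rk_i$ be the Sylvester matrix rank function on $\F_p[[\mathbf F]]$ obtained by pulling back (via the surjection $\F_p[[\mathbf F]]\to \F_p[\mathbf F/\mathbf N_i]$) the normalized rank $\frac{1}{|\mathbf F:\mathbf N_i|}\dim_{\F_p}(-)$ on the finite-dimensional $\F_p$-algebra $\F_p[\mathbf F/\mathbf N_i]$; its associated Sylvester module rank function $\dim_i$ satisfies $\dim_i M=\frac{1}{|\mathbf F:\mathbf N_i|}\dim_{\F_p}(\F_p[\mathbf F/\mathbf N_i]\otimes_{\F_p[[\mathbf F]]}M)$ on finitely presented $M$. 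Let $\rk=\rk_{\F_p[[\mathbf F]]}$ be the rank function of the universal division ring of fractions $\D_{\F_p[[\mathbf F]]}$, which exists since $\F_p[[\mathbf F]]\cong \F_p\langle\!\langle x_1,\ldots,x_d\rangle\!\rangle$ is a semifir (Proposition \ref{Lazard} and \cite[Theorem 2.9.4]{Cohfree}). By Proposition \ref{limitfg} it then suffices to establish two facts: (i) $\rk\ge \rk_i$ for every $i$, and (ii) $\rk=\lim_{i\to\infty}\rk_i$ in $\mathbb P(\F_p[[\mathbf F]])$; the conclusion for finitely generated $M$ is then immediate from that proposition and Remark \ref{limitdim}.

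For (i): since $\D_{\F_p[[\mathbf F]]}$ is the \emph{universal} epic division $\F_p[[\mathbf F]]$-ring, $\dim_{\F_p[[\mathbf F]]}\le \dim$ for every Sylvester module rank function coming from a division ring; but by Proposition \ref{internaldim} (semifir case) $\dim_{\F_p[[\mathbf F]]}$ is in fact the \emph{smallest} Sylvester module rank function of any kind on $\F_p[[\mathbf F]]$, hence $\dim_{\F_p[[\mathbf F]]}\le \dim_i$ for all $i$, which is exactly $\rk\ge \rk_i$. For (ii), the cleanest route is to invoke the results of \cite{Jataylor19}: the mod-$p$ L\"uck approximation in positive characteristic established there says precisely that for the chain $\mathbf N_i$ with trivial intersection one has $\rk_{\F_p[\mathbf F/\mathbf N_i]}\to \rk_{\D_{\F_p[[\mathbf F]]}}$, i.e. the normalized $\F_p$-ranks over the finite quotients converge to the rank function of the universal division ring of fractions of $\F_p\langle\!\langle x_1,\ldots,x_d\rangle\!\rangle$. (If one prefers not to cite this as a black box, one can instead argue: the limit $\rk_\infty:=\lim \rk_i$ exists on matrices after passing to a subsequence by compactness of $\mathbb P(\F_p[[\mathbf F]])$, it dominates $\rk$ by (i)-type reasoning in the limit, and the reverse inequality $\rk_\infty\le \rk$ follows from the fact that $\D_{\F_p[[\mathbf F]]}$, being built from $\F_p\langle\!\langle X\rangle\!\rangle$ by inverting full matrices, detects exactly the non-full matrices, and a matrix that is non-full over $\F_p\langle\!\langle X\rangle\!\rangle$ already drops rank in cofinally many finite quotients — this is the content of \cite{Jataylor19}.) Either way one gets $\rk=\lim_i\rk_i$.

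Once (i) and (ii) are in hand, Proposition \ref{limitfg} applies verbatim: for any finitely generated $\F_p[[\mathbf F]]$-module $M$ we get $\dim_{\F_p[[\mathbf F]]}M=\lim_{i\to\infty}\dim_i M$, which unwinds to the displayed formula. I expect the main obstacle to be verifying (ii) — that is, making precise that the positive-characteristic L\"uck approximation of \cite{Jataylor19} applies to this particular chain and yields convergence of the \emph{matrix} rank functions (not merely of Betti numbers of finitely presented modules); everything else is either a direct quotation of Proposition \ref{limitfg}/\ref{internaldim} or a bookkeeping check that $\dim_i$ on finitely presented modules is given by the normalized $\F_p$-dimension of $\F_p[\mathbf F/\mathbf N_i]\otimes_{\F_p[[\mathbf F]]}M$, which follows since $\F_p[\mathbf F/\mathbf N_i]$ is a free (hence flat, and finite rank) $\F_p[[\mathbf F]]$-module of rank $|\mathbf F:\mathbf N_i|$ so that the functor is exact and computes the claimed dimension on a finite presentation of $M$.
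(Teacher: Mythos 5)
Your strategy is the same as the paper's: convergence for finitely presented modules via \cite[Theorem 1.4]{Jataylor19} together with the domination $\dim_{\F_p[[\mathbf F]]}\le\dim_i$ from Proposition \ref{internaldim} (the semifir structure coming from Proposition \ref{Lazard}), followed by Proposition \ref{limitfg} to pass to finitely generated modules. Your steps (i) and (ii) are exactly the inputs the paper uses. However, the final step, ``which unwinds to the displayed formula,'' hides a genuine gap. Proposition \ref{limitfg} yields $\dim_{\F_p[[\mathbf F]]}M=\lim_i\dim_i M$, where $\dim_i$ is Li's \emph{extended} Sylvester module rank function, defined on a finitely generated module by the infimum (\ref{liext}) over finitely presented covers. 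You identify $\dim_i$ with the normalized $\F_p$-dimension of $\F_p[\mathbf F/\mathbf N_i]\otimes_{\F_p[[\mathbf F]]}(-)$ only for finitely presented modules. For a merely finitely generated $M$ this gives, a priori, only
$$\dim_i M\;\ge\;\frac{\dim_{\F_p}(\F_p[\mathbf F/\mathbf N_i]\otimes_{\F_p[[\mathbf F]]}M)}{|\mathbf F:\mathbf N_i|},$$
since every finitely presented cover of $M$ surjects onto $M$ after tensoring; the reverse inequality requires exhibiting a cover that attains the infimum. The paper does this: writing $M=\F_p[[\mathbf F]]^n/I$, it uses that the finite discrete module $\F_p[\mathbf F/\mathbf N_i]\otimes_{\F_p[[\mathbf F]]}M$ is finitely presented over $\F_p[[\mathbf F]]$ (Lemma \ref{finitediscrete}, which needs $\mathbf F$ finitely generated) to find a finitely generated $J\le I$ with $J+(I_{\mathbf N_i}^{\mathbf F})^n=I+(I_{\mathbf N_i}^{\mathbf F})^n$; then $\widetilde M=\F_p[[\mathbf F]]^n/J$ is a finitely presented cover with the same reduction modulo $\mathbf N_i$. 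Without this step the displayed formula does not follow from Proposition \ref{limitfg}.

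A secondary error: your justification of the bookkeeping asserts that $\F_p[\mathbf F/\mathbf N_i]$ is a free, hence flat, right $\F_p[[\mathbf F]]$-module. It is not: it is the quotient $\F_p[[\mathbf F]]/I_{\mathbf N_i}^{\mathbf F}$ (for $\mathbf F=\Z_p$ and $\mathbf N=p\Z_p$ it is $\F_p[[t]]/(t^p)$, a torsion module over a domain). What is free is $\F_p[[\mathbf F]]$ over $\F_p[[\mathbf N_i]]$, not the other way around. The conclusion of that check survives because only right exactness of the tensor product is needed to compute $\dim_i$ of a finitely presented module from a finite presentation, but the reason you give is false.
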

 \begin{proof} 
 Let $\mathbf N$ be a normal open subgroup of $\mathbf F$ and let $\dim_{\F_p[\mathbf F/\mathbf N]}$ be  a Sylvester module rank function on $\F_p[[\mathbf F]]$ defined by 
 $$\dim_{\F_p[\mathbf F/\mathbf N]} L=\frac{\dim_{\F_p}(\F_p[ \mathbf F/\mathbf N]\otimes_{\F_p[[\mathbf F]]}L )}{|\mathbf F:\mathbf N|},$$ where
 $L$ is a finitely presented $\F_p[[\mathbf F]]$-module.
 
Let $M=\F_p[[\mathbf F]]^n/I$ be a finitely generated  $\F_p[[\mathbf F]]$-module. Since $$ {\F_p[\mathbf F/\mathbf N]} \otimes_{\F_p[[\mathbf F]]} M\cong \F[[\mathbf G]]^n/(I+(I_{\mathbf N}^{\mathbf G})^n)$$ is finite,  it is finitely presented by Lemma \ref{finitepresent}. Therefore, there exists a finitely generated $\F_p[[\mathbf G]]$-submodule $J$ of $I$ such that $I+(I_N^G)^n=J+(I_N^G)^n$.
Put $\widetilde M=\F_p[[\mathbf F]]^n/J$. Then $\widetilde M$ is a finitely presented  $\F_p[[\mathbf F]]$-module $\widetilde M$ satisfying the following conditions: 
\begin{enumerate}
\item $M$ is a quotient of $\widetilde M$ and 
 \item ${\F_p[\mathbf F/\mathbf N]} \otimes_{\F_p[[\mathbf F]]}\widetilde M\cong   {\F_p[\mathbf F/\mathbf N]} \otimes_{\F_p[[\mathbf F]]} M.$
 \end{enumerate}
 Therefore, from (\ref{liext}) we obtain that
  $$\dim_{\F_p[\mathbf F/\mathbf N]} M=\frac{\dim_{\F_p}(\F_p[ \mathbf F/\mathbf N]\otimes_{\F_p[[\mathbf F]]}\widetilde M )}{|\mathbf F:\mathbf N|}=\frac{\dim_{\F_p}(\F_p[ \mathbf F/\mathbf N]\otimes_{\F_p[[\mathbf F]]} M )}{|\mathbf F:\mathbf N|}.$$

 In the case where $M$ is finitely presented,  \cite[Theorem 1.4]{Jataylor19} and Remark \ref{limitdim} impliy that 
 \begin{equation}\label{limit}
 \dim_{\F_p[[\mathbf F]]} M= \lim_{i\to \infty} \dim_{\F_p[\mathbf F/\mathbf N_i]} M.\end{equation}
By Proposition \ref{internaldim}, we also have that $\dim_{\F_p[[\mathbf F]]} M\le \dim_{\F_p[\mathbf F/\mathbf N_i]} M$ for all $i$. Therefore, Proposition \ref{limitfg} implies that (\ref{limit}) holds also when $M$ is finitely generated. Hence,
$$ \dim_{\F_p[[\mathbf F]]} M= \lim_{i\to \infty} \dim_{\F_p[\mathbf F/\mathbf N_i]} M=\lim_{i\to \infty} \frac{\dim_{\F_p}(\F_p[ \mathbf F/\mathbf N_i]\otimes_{\F_p[[\mathbf F]]} M )}{|\mathbf F:\mathbf N_i|}.$$
\end{proof}
  In the following proposition we collect some basic properties of $\D_{\F_p[[\mathbf F]]}$.
 \begin{pro}\label{passH}
Let $\mathbf H$ be a finitely generated closed  subgroup of $\mathbf F$. The following holds.
\begin{enumerate}
 \item [(a)] Let $\D_\mathbf H$ be the division closure of $\F_p[[\mathbf H]]$ in $\D_{\F_p[[\mathbf F]]}$. Then $\D_\mathbf H$   is isomorphic to $\D_{\F_p[[\mathbf H]]}$  as an $\F_p[[\mathbf H]]$-ring. 
 \item [(b)] If  $M$ is a finitely generated $\F_p[[\mathbf H]]$-module, then
  $$\dim_{\F_p[[\mathbf H]]}(M)=\dim_{\F_p[[\mathbf F]]}(\F_p[[\mathbf F]]\otimes_{\F_p[[\mathbf H]]} M).$$
 \item[(c)]   If $\mathbf H$ is open then,  $$\D_{\F_p[[\mathbf F]]}\cong \F_p[[\mathbf F]]\otimes_{\F_p[[\mathbf H]]} \D_{\F_p[[\mathbf H]]}$$ as $(\F_p[[\mathbf F]],\F_p[[\mathbf H]])$-bimodules.
 \end{enumerate}
 \end{pro}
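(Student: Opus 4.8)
The plan is to prove the three parts in the order (b), (a), (c), since (b) is the computational heart and the other two are formal consequences. For part (b), I would use the limit formula of Proposition \ref{limitprop} applied on both sides. Fix a chain $\mathbf F=\mathbf N_1>\mathbf N_2>\cdots$ of open normal subgroups of $\mathbf F$ with trivial intersection. Then $\mathbf M_i=\mathbf H\cap\mathbf N_i$ is a chain of open normal subgroups of $\mathbf H$ with trivial intersection, so Proposition \ref{limitprop} computes $\dim_{\F_p[[\mathbf H]]}(M)$ as the limit of $\dim_{\F_p}(\F_p[\mathbf H/\mathbf M_i]\otimes_{\F_p[[\mathbf H]]}M)/|\mathbf H:\mathbf M_i|$, and likewise computes $\dim_{\F_p[[\mathbf F]]}(\F_p[[\mathbf F]]\otimes_{\F_p[[\mathbf H]]}M)$ as the limit of $\dim_{\F_p}\bigl(\F_p[\mathbf F/\mathbf N_i]\otimes_{\F_p[[\mathbf F]]}(\F_p[[\mathbf F]]\otimes_{\F_p[[\mathbf H]]}M)\bigr)/|\mathbf F:\mathbf N_i|$. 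Now $\F_p[\mathbf F/\mathbf N_i]\otimes_{\F_p[[\mathbf F]]}(\F_p[[\mathbf F]]\otimes_{\F_p[[\mathbf H]]}M)\cong \F_p[\mathbf F/\mathbf N_i]\otimes_{\F_p[[\mathbf H]]}M\cong \F_p[\mathbf F/\mathbf N_i]\otimes_{\F_p[\mathbf H/\mathbf M_i]}(\F_p[\mathbf H/\mathbf M_i]\otimes_{\F_p[[\mathbf H]]}M)$, and since $\F_p[\mathbf F/\mathbf N_i]$ is free over $\F_p[\mathbf H/\mathbf M_i]$ of rank $|\mathbf F:\mathbf N_i\mathbf H|\cdot|\mathbf H:\mathbf M_i|/|\mathbf F:\mathbf N_i|$ — wait, more cleanly, $\F_p[\mathbf F/\mathbf N_i]$ is free of rank $|\mathbf F:\mathbf N_i|/|\mathbf H\mathbf N_i:\mathbf N_i| = |\mathbf F:\mathbf H\mathbf N_i|$ over $\F_p[\mathbf H\mathbf N_i/\mathbf N_i]\cong\F_p[\mathbf H/\mathbf M_i]$ — so the $\F_p$-dimension scales by exactly $|\mathbf F:\mathbf N_i|/|\mathbf H:\mathbf M_i|$. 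Thus the $i$-th term in the $\mathbf F$-limit equals the $i$-th term in the $\mathbf H$-limit, and (b) follows by taking limits.

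For part (a), I would invoke Proposition \ref{cohndiv}: it suffices to show $\dim_{\D_{\mathbf H}}M=\dim_{\D_{\F_p[[\mathbf H]]}}M$ for every finitely presented $\F_p[[\mathbf H]]$-module $M$, since both $\D_{\mathbf H}$ and $\D_{\F_p[[\mathbf H]]}$ are epic division $\F_p[[\mathbf H]]$-rings (the former because it is by definition the division closure of $\F_p[[\mathbf H]]$ in $\D_{\F_p[[\mathbf F]]}$). By the same argument as in Subsection \ref{subsect:universa} on epic rings and flatness of division closures, $\dim_{\D_{\mathbf H}}M$ equals the $\D_{\mathbf H}$-dimension of $\D_{\mathbf H}\otimes_{\F_p[[\mathbf H]]}M$, which by flatness of $\D_{\F_p[[\mathbf F]]}$ over $\D_{\mathbf H}$ and of the relevant modules equals $\dim_{\F_p[[\mathbf F]]}(\F_p[[\mathbf F]]\otimes_{\F_p[[\mathbf H]]}M)$ up to the identification of $\dim$ over the division closure with $\dim_{\F_p[[\mathbf F]]}$; then part (b) rewrites this as $\dim_{\F_p[[\mathbf H]]}M$. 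Finally, since $\F_p[[\mathbf H]]$ is a semifir (Proposition \ref{Lazard} identifies it with a free power series ring, and such rings are semifirs), $\dim_{\F_p[[\mathbf H]]}$ is the smallest Sylvester module rank function on $\F_p[[\mathbf H]]$ by Proposition \ref{internaldim}, hence $\dim_{\D_{\mathbf H}}\ge\dim_{\F_p[[\mathbf H]]}$ automatically; combined with the reverse inequality just obtained, we get equality, so $\D_{\mathbf H}\cong\D_{\F_p[[\mathbf H]]}$ as $\F_p[[\mathbf H]]$-rings.

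For part (c), with $\mathbf H$ open in $\mathbf F$, I would apply Proposition \ref{carnatural} to the crossed product structure. Here $\F_p[[\mathbf F]]$ is not literally a crossed product of $\F_p[[\mathbf H]]$ with a finite group, but I can pass to an open normal $\mathbf N\le\mathbf H$ that is normal in $\mathbf F$ and use that $\F_p[[\mathbf F]]$ is a crossed product of $\F_p[[\mathbf N]]$ with the finite group $\mathbf F/\mathbf N$, with $\F_p[[\mathbf H]]$ the sub-crossed-product over $\mathbf H/\mathbf N$; alternatively, and more directly, the statement I actually want is the bimodule isomorphism, which is exactly condition (e) of Proposition \ref{carnatural} applied to $R=\F_p[[\mathbf F]]$ viewed over $S=\F_p[[\mathbf H]]$ with $G=\mathbf F/\mathbf H$ in the crossed-product sense after choosing a normal core. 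To verify the hypothesis of that proposition, I would check condition (b), namely $\dim_{\D_e}\D_{\F_p[[\mathbf F]]}=|\mathbf F:\mathbf N|$ where $\D_e$ is the division closure of $\F_p[[\mathbf N]]$; this follows from part (b) applied with $\mathbf H$ replaced by $\mathbf N$, taking $M=\F_p[[\mathbf F]]$ as an $\F_p[[\mathbf N]]$-module, which is free of rank $|\mathbf F:\mathbf N|$, so $\dim_{\F_p[[\mathbf N]]}(\F_p[[\mathbf F]])=|\mathbf F:\mathbf N|$, and part (b)'s identification $\D_e\cong\D_{\F_p[[\mathbf N]]}$ from part (a) then gives $\dim_{\D_e}\D_{\F_p[[\mathbf F]]}=|\mathbf F:\mathbf N|$. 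Then Proposition \ref{carnatural} yields the bimodule isomorphism over $\mathbf N$, and restricting the crossed-product decomposition along $\mathbf N\le\mathbf H\le\mathbf F$ gives $\D_{\F_p[[\mathbf F]]}\cong\F_p[[\mathbf F]]\otimes_{\F_p[[\mathbf H]]}\D_{\F_p[[\mathbf H]]}$ as claimed.

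The main obstacle I anticipate is the bookkeeping in part (c): matching up the abstract crossed-product hypothesis of Proposition \ref{carnatural} with the concrete situation of an open (not necessarily normal) subgroup $\mathbf H\le\mathbf F$, and descending the bimodule isomorphism from the normal core $\mathbf N$ to $\mathbf H$ itself. The cleanest route is probably to observe that $\F_p[[\mathbf F]]$ is free as a right $\F_p[[\mathbf H]]$-module (Lemma \ref{freemodule}) of rank $|\mathbf F:\mathbf H|$, reduce to showing the natural map $\F_p[[\mathbf F]]\otimes_{\F_p[[\mathbf H]]}\D_{\F_p[[\mathbf H]]}\to\D_{\F_p[[\mathbf F]]}$ (well-defined since $\D_{\F_p[[\mathbf H]]}\cong\D_{\mathbf H}$ sits inside $\D_{\F_p[[\mathbf F]]}$ by part (a)) is an isomorphism, and then check injectivity and surjectivity by a dimension count over $\D_{\F_p[[\mathbf F]]}$ using part (b) — the left side has $\D_{\F_p[[\mathbf F]]}$-dimension $|\mathbf F:\mathbf H|$ after tensoring up, while the right side is one-dimensional, and these are reconciled by the factor $|\mathbf F:\mathbf H|=\dim_{\F_p[[\mathbf H]]}\F_p[[\mathbf F]]$ exactly as in the $\D_e$ computation above.
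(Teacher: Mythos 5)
Your parts (a) and (b) follow the paper's own route: the same comparison of the two limit formulas from Proposition \ref{limitprop} via the freeness of $\F_p[\mathbf F/\mathbf N_i]$ over $\F_p[\mathbf H/\mathbf H\cap\mathbf N_i]$ of rank $|\mathbf F:\mathbf H\mathbf N_i|$ yields $\dim_{\F_p[[\mathbf H]]}M=\dim_{\F_p[[\mathbf F]]}(\F_p[[\mathbf F]]\otimes_{\F_p[[\mathbf H]]}M)$, and then the chain of identifications through $\D_{\mathbf H}\otimes_{\F_p[[\mathbf H]]}M$ plus Cohn's criterion (Proposition \ref{cohndiv}) gives (a). That much is correct.

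Part (c), however, has a genuine circularity. You propose to verify hypothesis (b) of Proposition \ref{carnatural}, namely $\dim_{\D_e}\D_{\F_p[[\mathbf F]]}=|\mathbf F:\mathbf N|$, by computing $\dim_{\F_p[[\mathbf N]]}(\F_p[[\mathbf F]])=|\mathbf F:\mathbf N|$. But the latter computes $\dim_{\D_e}\bigl(\D_e\otimes_{\F_p[[\mathbf N]]}\F_p[[\mathbf F]]\bigr)$, which is the left $\D_e$-dimension of the crossed product $T=\D_e*(\mathbf F/\mathbf N)$, not of $\D_{\F_p[[\mathbf F]]}$ itself. The two agree exactly when the natural surjection $\gamma:T\to\D_{\F_p[[\mathbf F]]}$ is injective, i.e., when a transversal of $\mathbf N$ in $\mathbf F$ remains left $\D_e$-linearly independent inside $\D_{\F_p[[\mathbf F]]}$ --- and that is precisely the content of conditions (b)--(e) of Proposition \ref{carnatural}, the statement you are trying to establish. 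Identifying $\D_e\otimes_{\F_p[[\mathbf N]]}\F_p[[\mathbf F]]$ with $\D_{\F_p[[\mathbf F]]}$ is condition (d) verbatim. Your fallback ``dimension count over $\D_{\F_p[[\mathbf F]]}$'' at the end has the same problem: surjectivity of $\gamma$ is easy (the image is a domain of finite left $\D_e$-dimension, hence a division ring containing $\F_p[[\mathbf F]]$), but injectivity is exactly what a dimension count cannot deliver without already knowing $\dim_{\D_e}\D_{\F_p[[\mathbf F]]}$ from some independent source.

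The gap is repairable, and the paper's fix is to verify condition (a) of Proposition \ref{carnatural} instead: for $\mathbf N$ normal and $M$ a finitely presented $\F_p[[\mathbf F]]$-module, one applies Proposition \ref{limitprop} with the tail of the chain contained in $\mathbf N$ to get
$$\dim_{\F_p[[\mathbf F]]}M=\lim_{i\to\infty}\frac{\dim_{\F_p}(\F_p\otimes_{\F_p[[\mathbf N_i]]}M)}{|\mathbf F:\mathbf N||\mathbf N:\mathbf N_i|}=\frac{\dim_{\F_p[[\mathbf N]]}M}{|\mathbf F:\mathbf N|},$$
i.e.\ $\dim_{\F_p[[\mathbf F]]}=\widetilde{\dim_{\F_p[[\mathbf N]]}}$. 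Note this is a statement about \emph{restriction} of $\F_p[[\mathbf F]]$-modules to $\F_p[[\mathbf N]]$, which is not the same as, and does not follow formally from, your part (b) (which concerns \emph{induction} of $\F_p[[\mathbf N]]$-modules up to $\F_p[[\mathbf F]]$); it needs this separate, if easy, limit computation. Once condition (a) holds, Proposition \ref{carnatural} hands you (d)/(e) for the normal case, and the passage from the normal subgroup to a general open $\mathbf H$ (via the normal core, as you suggest, or via a chain of index-$p$ steps, as the paper does) is then routine.
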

 \begin{proof}  (a) Fix   a normal chain $\mathbf F=\mathbf N_1>\mathbf N_2>\mathbf N_3> \ldots$  of open normal subgroups of $\mathbf F$,  and let $\mathbf H_i=\mathbf N_i\cap \mathbf H$. Let $M$ be a finitely generated $\F_p[[\mathbf H]]$-module. Observe first, that by Proposition \ref{limitprop},  
\begin{equation} \label{dimH} \dim_{\F_p[[\mathbf H]]} M= \lim_{i\to \infty}\frac{\dim_{\F_p} (\F_p[\mathbf H/\mathbf H_i]\otimes_{\F_p[[ \mathbf H]]}M) }{| \mathbf H:\mathbf H_i|}. \end{equation}

Considering  $\F_p[\mathbf F/\mathbf  N_i]$ as a right $\F_p[[\mathbf H]]$-module, we obtain that 
$$ 
\F_p[\mathbf F/\mathbf  N_i]\cong   \F_p[\mathbf H/\mathbf  H_i]^{|\mathbf F:\mathbf N_i\mathbf H|}$$ as right $\F_p[[\mathbf H]]$-modules.  Thus,
\begin{equation}\label{FtoH} \dim_{\F_p} (\F_p[\mathbf H/\mathbf H_i]\otimes_{\F_p[[ \mathbf H]]} M)=\frac{\dim_{\F_p} (\F_p[\mathbf F/\mathbf N_i]\otimes_{\F_p[[\mathbf H]]} M)}{|\mathbf  F:\mathbf N_i\mathbf H|}.\end{equation}
 Therefore, from (\ref{dimH}),  (\ref{FtoH}) and Proposition \ref{limitprop}, we conclude that
\begin{multline}\label{fphfpf}
 \dim_{\F_p[[\mathbf H]]} M=\lim_{i\to \infty}\frac{\dim_{\F_p}(\F_p[\mathbf F/\mathbf N_i]\otimes _{\F_p[[\mathbf F]]}(\F_p[[\mathbf F]] \otimes_{\F_p[[\mathbf H]]} M)}{|\mathbf F:\mathbf N_i|}=\\ 
 \dim _{\F_p[[\mathbf F]]}( \F_p[[\mathbf F]] \otimes_{\F_p[[\mathbf H]]} M).\end{multline}
 On the other hand, we have
 \begin{multline*}
\dim _{\F_p[[\mathbf F]]}( \F_p[[\mathbf F]] \otimes_{\F_p[[\mathbf H]]} M)=\\ \dim_{\D_{\F_p[[\mathbf F]]}} (\D_{\F_p[[\mathbf F]]}\otimes_{\F_p[[\mathbf F]]}( \F_p[[\mathbf F]] \otimes_{\F_p[[\mathbf H]]} M))=\\
\dim_{\D_{\F_p[[\mathbf F]]}} (\D_{\F_p[[\mathbf F]]}\otimes_{\F_p[[\mathbf H]]} M))=\dim_{\D_\mathbf H} (\D_\mathbf H \otimes_{\F_p[[\mathbf H]]} M) .\end{multline*}
Thus,  we conclude that $$\dim_{\F_p[[\mathbf H]]} M=\dim_{\D_\mathbf H} (\D_\mathbf H \otimes_{\F_p[[\mathbf H]]} M) ,$$ and so, by Proposition \ref{cohndiv}, $\D_\mathbf H$ is isomorphic to $\D_{\F_p[[\mathbf H]]}$ 
as an $\F_p[[\mathbf H]]$-ring.  

(b) This is the equality (\ref{fphfpf}).

 (c)  First assume that $\mathbf H$ is normal in $\mathbf F$. Observe that for large $i$, $\mathbf N_i\le \mathbf H$. Let $M$ be a finitely presented $\F_p[[F]]$-module. Then we obtain that
 \begin{multline*}
 \dim_{\F_p[[\mathbf F]]} M= \lim_{i\to \infty}\frac{\dim_{\F_p}(\F_p \otimes_{\F_p[[\mathbf N_i]]}M )}{|\mathbf F:\mathbf N_i|} =\\ \lim_{i\to \infty}\frac{\dim_{\F_p}(\F_p \otimes_{\F_p[[\mathbf N_i]]}M )}{|\mathbf F:\mathbf H||\mathbf H:\mathbf N_i|} =\frac{\dim_{\F_p[[\mathbf H]]} M}{|\mathbf F:\mathbf H|}.\end{multline*}
 Therefore, $ \dim_{\F_p[[\mathbf F]]} =\widetilde{ \dim_{\F_p[[\mathbf H]]} }$. Now, the result follows from Proposition \ref{carnatural}.

Now we assume that $\mathbf H$ is arbitrary,  We argue by induction on $|\mathbf F:\mathbf H|$. If $\mathbf H$ has index $p$ in $\mathbf F$, then it is normal. If  $|\mathbf F:\mathbf H|>p$, we find $\mathbf H_1$ of index $p$ in $\mathbf F$ containing $\mathbf H$. Then by induction,
\begin{multline*}\D_{\F_p[[\mathbf F]]}\cong \F_p[[\mathbf F]]\otimes_{\F_p[[\mathbf H_1]]} \D_{\F_p[[\mathbf H_1]]}\cong\\  \F_p[[\mathbf F]]\otimes_{\F_p[[\mathbf H_1]]} 
(\F_p[[\mathbf  H_1]]\otimes_{\F_p[[\mathbf H]]} \D_{\F_p[[\mathbf H]]})\cong \F_p[[\mathbf F]]\otimes_{\F_p[[\mathbf H]]} \D_{\F_p[[\mathbf H]]}.\end{multline*}
 \end{proof}
 In view of the previous proposition, we will identify  $\D_{\F_p[[\mathbf H]]}$ and  the division closure of $\F_p[[\mathbf H]]$ in $\D_{\F_p[[\mathbf F]]}$, and see $\D_{\F_p[[\mathbf H]]}$ as a subring of $\D_{\F_p[[\mathbf F]]}$.

\subsection{The division rings $\D(\F_p[G],\D_{\F_p[[\mathbf F]]})$}
Let $G$ be a subgroup of $\mathbf F$. As we have explained in Subsection \ref{subsect:universa} there exists the universal division $\F_p[G]$-ring of fractions $\D_{\F_p[G]}$. Let $\D_G=\D(\F_p[G],\D_{\F_p[[\mathbf F]]})$ be the division closure of $\F_p[G]$ in  $\D_{\F_p[[\mathbf F]]}$. 
In this subsection we will show  that $\D_{\F_p[G]}$ and $\D_G$ are isomorphic as $\F_p[G]$-rings. 
In the case $G=F$ is a finitely generated free group and $\mathbf F$ is the pro-$p$ completion of $F$, this result follows from \cite[Corollary 2.9.16]{Cohfree}.

\begin{pro} \label{divclousure}  Let $\mathbf F$ be a finitely generated free pro-$p$ group and let $G$ be a finitely generated subgroup of $\mathbf F$. 
 Let $\mathbf F=\mathbf N_1>\mathbf N_2>\mathbf N_3> \ldots$ be a chain of open normal subgroups of $\mathbf F$ with trivial intersection. We put $G_j=G\cap \mathbf N_j$. Let $\D_G=\D(\F_p[G],\D_{\F_p[[\mathbf F]]})$
  be the division closure of $\F_p[G]$ in  $\D_{\F_p[[\mathbf F]]}$. Then for every finitely generated $\F_p[G]$-module $M$,
\begin{multline*}\dim_{\D_G} ( \D_{G}\otimes_{\F_p[G]} M)= \lim_{i\to \infty}\frac{\dim_{\F_p}(\F_p \otimes_{\F_p[ G_i]}M )}{|G:G_i|}=\\
\dim_{\D_{\F_p[ G]}} ( \D_{\F_p[ G]}\otimes_{\F_p[G]} M).\end{multline*}
In particular, 
the divison rings $\D_G$ and $\D_{\F_p[G]}$ are isomorphic as  $\F_p[G]$-rings.
\end{pro}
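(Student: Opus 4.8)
The plan is to reduce the three-term equality to the positive-characteristic L\"uck approximation over the completed group algebra $\F_p[[\mathbf F]]$ furnished by Proposition \ref{limitprop}, and then to identify $\D_G$ with $\D_{\F_p[G]}$ by means of Cohn's criterion, Proposition \ref{cohndiv}. Fix a finitely generated $\F_p[G]$-module $M$ and put $P=\F_p[[\mathbf F]]\otimes_{\F_p[G]}M$, a finitely generated $\F_p[[\mathbf F]]$-module. Because $\mathbf N_i$ is normal in $\mathbf F$, the stabiliser in $G$ of any point of $\mathbf F/\mathbf N_i$ under the right translation action equals $G_i=G\cap\mathbf N_i$, so that $\F_p[\mathbf F/\mathbf N_i]\cong\F_p[G/G_i]^{\,|\mathbf F:\mathbf N_i|/|G:G_i|}$ as right $\F_p[G]$-modules; tensoring over $\F_p[[\mathbf F]]$ with $P$ gives $\F_p[\mathbf F/\mathbf N_i]\otimes_{\F_p[[\mathbf F]]}P\cong(\F_p\otimes_{\F_p[G_i]}M)^{\,|\mathbf F:\mathbf N_i|/|G:G_i|}$, whence
$$\frac{\dim_{\F_p}\bigl(\F_p[\mathbf F/\mathbf N_i]\otimes_{\F_p[[\mathbf F]]}P\bigr)}{|\mathbf F:\mathbf N_i|}=\frac{\dim_{\F_p}\bigl(\F_p\otimes_{\F_p[G_i]}M\bigr)}{|G:G_i|}.$$
Applying Proposition \ref{limitprop} to $P$ and using this identity, the middle limit in the statement exists and equals $\dim_{\F_p[[\mathbf F]]}(P)=\dim_{\D_{\F_p[[\mathbf F]]}}\bigl(\D_{\F_p[[\mathbf F]]}\otimes_{\F_p[G]}M\bigr)$.

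It then remains to match $\dim_{\D_{\F_p[[\mathbf F]]}}\bigl(\D_{\F_p[[\mathbf F]]}\otimes_{\F_p[G]}-\bigr)$ with $\dim_{\D_G}$ and $\dim_{\D_{\F_p[G]}}$. Introduce the division closure $\D_G$ of $\F_p[G]$ in $\D_{\F_p[[\mathbf F]]}$ and the Sylvester module rank function $\mu$ on $\F_p[G]$ given by $\mu(N)=\dim_{\D_{\F_p[[\mathbf F]]}}\bigl(\D_{\F_p[[\mathbf F]]}\otimes_{\F_p[G]}N\bigr)$, which is induced by the embedding $\F_p[G]\hookrightarrow\D_{\F_p[[\mathbf F]]}$. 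I would prove $\mu=\dim_{\D_{\F_p[G]}}$ on finitely presented modules. The inequality $\dim_{\D_{\F_p[G]}}\le\mu$ is immediate from the universality of $\D_{\F_p[G]}$. Granting the reverse inequality, Proposition \ref{cohndiv} (applied to the epic division $\F_p[G]$-ring attached to $\mu$) identifies $\D_G$ with $\D_{\F_p[G]}$ as $\F_p[G]$-rings; in particular $\D_G$ is a division ring over which $\D_{\F_p[[\mathbf F]]}$ is free, and hence
$$\dim_{\D_{\F_p[[\mathbf F]]}}\bigl(\D_{\F_p[[\mathbf F]]}\otimes_{\F_p[G]}M\bigr)=\dim_{\D_G}\bigl(\D_G\otimes_{\F_p[G]}M\bigr)=\dim_{\D_{\F_p[G]}}\bigl(\D_{\F_p[G]}\otimes_{\F_p[G]}M\bigr)$$
for every finitely generated $M$. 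Combined with the first paragraph this is exactly the asserted equality, now valid for all finitely generated $M$, and the final sentence of the statement is the isomorphism just obtained.

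The crux is the inequality $\mu\le\dim_{\D_{\F_p[G]}}$ on finitely presented $\F_p[G]$-modules — equivalently, that every square matrix over $\F_p[G]$ which becomes invertible over $\D_{\F_p[[\mathbf F]]}$ already becomes invertible over $\D_{\F_p[G]}$. This direction is not formal: universality only yields the opposite inequality, and the statement fails for a general overring in place of $\F_p[[\mathbf F]]$, so it must use that $\mathbf F$ is free pro-$p$, and (I expect) more of the machinery of \cite{Jataylor19} than the black-box Proposition \ref{limitprop} alone. The route I would take is to verify that $\D_G$ is a \emph{Hughes-free} epic division $\F_p[G]$-ring: for every finitely generated $H\le G$ and every normal subgroup $N$ of $H$ with $H/N$ infinite cyclic generated by $tN$, one has to check that the powers $\{t^n:n\in\Z\}$ are left linearly independent, inside $\D_{\F_p[[\mathbf F]]}$, over the division closure of $\F_p[N]$; I would try to deduce this from the structure of $\D_{\F_p[[\mathbf F]]}$ by passing to the closures of $H$ and $N$ in $\mathbf F$ and invoking Proposition \ref{passH}. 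Since $G$ is locally indicable, the uniqueness of Hughes-free epic division $K[G]$-rings — the property underlying the construction of $\D_{\F_p[G]}$ in \cite{Ja20universal} — then forces $\D_G\cong\D_{\F_p[G]}$, which supplies the missing inequality.
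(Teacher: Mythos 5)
Your first paragraph is correct and is essentially what the paper does: setting $P=\F_p[[\mathbf F]]\otimes_{\F_p[G]}M$, counting $G$-orbits on $\mathbf F/\mathbf N_i$ (or, as in the paper, reducing to the case where $G$ is dense), and invoking Proposition \ref{limitprop} gives
$\dim_{\D_G}(\D_G\otimes_{\F_p[G]}M)=\dim_{\F_p[[\mathbf F]]}P=\lim_i\dim_{\F_p}(\F_p\otimes_{\F_p[G_i]}M)/|G:G_i|$. You are also right that the whole content of the proposition is the remaining inequality $\mu\le\dim_{\D_{\F_p[G]}}$, and right that universality only gives the other direction.

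For that step, however, your proposal has a genuine gap, and the route you sketch for filling it does not work as stated. You reduce everything to the claim that $\D_G$ is Hughes-free over $\F_p[G]$, but you do not prove this; you only say you would ``pass to the closures of $H$ and $N$ in $\mathbf F$ and invoke Proposition \ref{passH}.'' The obstruction is that the closure $\overline N$ of $N$ in $\mathbf F$ can contain the element $t$ generating $H/N$, so linear independence of the powers of $t$ over $\D_{\F_p[[\overline N]]}$ can simply be false, while the statement you actually need concerns the (much smaller) division closure of the abstract group algebra $\F_p[N]$. Concretely, take $H=\langle a,b\rangle\cong\Z^2$ embedded in $\Z_p\le\mathbf F$ with $a\mapsto 1$ and $b\mapsto\alpha$ irrational, and $N=\langle a\rangle$: then $\overline N=\overline H$ is the whole procyclic group, $b\in\overline N$, and the required independence of $\{b^n\}$ over the division closure of $\F_p[\langle a\rangle]$ amounts to the (true but nontrivial) transcendence of $(1+x)^\alpha$ over $\F_p(x)$ in $\F_p((x))$. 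So Hughes-freeness of $\D_G$ is a real theorem, not a formal consequence of Proposition \ref{passH}. The paper itself notes that this Hughes-free route is carried out in \cite[Lemma 7.5.5]{Mo21} via the uniqueness theorem of \cite{Hu70,Sath09}; it is a legitimate alternative, but only once that independence is actually established.

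The paper's own argument avoids Hughes-freeness altogether: it sets $\mathbf F_i=\gamma_i(\mathbf F)$, $H_i=G\cap\mathbf F_i$, and approximates \emph{both} sides through the torsion-free nilpotent quotients. On the one hand, \cite[Theorem 1.2]{Ja20universal} gives $\dim_{\D_{\F_p[G]}}(\D_{\F_p[G]}\otimes M)=\lim_i\dim_{\D_{\F_p[G/H_i]}}(\D_{\F_p[G/H_i]}\otimes M)$; on the other, Proposition \ref{limitprop} together with the argument of \cite[Theorem 2.3]{Ja17} gives $\dim_{\F_p[[\mathbf F]]}P=\lim_i\dim_{\mathcal Q(\F_p[[\mathbf F/\mathbf F_i]])}(\mathcal Q(\F_p[[\mathbf F/\mathbf F_i]])\otimes_{\F_p[G]}M)$. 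The two limits coincide term by term because $\F_p[G/H_i]$ and $\F_p[[\mathbf F/\mathbf F_i]]$ are Noetherian domains, so the division closure of the former inside the Ore field of fractions of the latter is its own Ore field of fractions, which is universal. If you want to keep your architecture, you should either import the nilpotent-quotient approximation as the paper does, or supply an actual proof of the independence statement underlying Hughes-freeness of $\D_G$.
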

\begin{proof} Without loss of generality we assume that $G$ is dense in $\mathbf F$.
First observe that
\begin{multline}\label{equa1}
 \dim_{\D_G} ( \D_{G}\otimes_{\F_p[G]} M)=\dim_{\D_{\F_p[[\mathbf F]]}} (\D_{\F_p[ [\mathbf F]]}\otimes_{\F_p[G]} M)=\\
\dim_{\D_{\F_p[[\mathbf F]]}} (\D_{\F_p[ [\mathbf F]]}\otimes _{\F_p[[\mathbf F]]}(\F_p[[\mathbf F]]\otimes_{\F_p[G]} M))=
\dim_{\F_p[[\mathbf F]]} (\F_p[[\mathbf F]]\otimes_{\F_p[G]} M).\end{multline}
Observe also that, since $G$ is dense in $\mathbf F$,  $|\mathbf F: \mathbf N_i|=|G:G_i|$ and
\begin{multline*}\F_p \otimes_{\F_p[ G_i]}M\cong
 \F_p[G/G_i] \otimes_{\F_p[ G]}M\cong \\
  \F_p[\mathbf F/\mathbf N_i] \otimes_{\F_p[G]}M\cong  \F_p[\mathbf F/\mathbf N_i] \otimes_{\F_p[[\mathbf F]]}(\F_p[[\mathbf F]]\otimes_{\F_p[G]} M)).\end{multline*}

Thus,  Proposition \ref{limitprop} implies that
\begin{equation}\label{equa5}  \dim_{\D_G} ( \D_{G}\otimes_{\F_p[G]} M)= \lim_{i\to \infty}\frac{\dim_{\F_p}(\F_p \otimes_{\F_p[ G_i]}M )}{|G:G_i|}.\end{equation}

Let $\mathbf F_i= \gamma_i(\mathbf F)$ and we put $H_i=G\cap \mathbf F_i$. The ring $\F_p[G/H_i]$ is a Noetherian domain and its classical field of fractions $\mathcal Q(\F_p[G/H_i])$ is universal. Moreover, by \cite[Theorem 1.2]{Ja20universal}, we have that
\begin{equation}\label{equa4} \dim_{\D_{\F_p[ G]}} ( \D_{\F_p[ G]}\otimes_{\F_p[G]} M)= \lim_{i\to \infty} \dim_{\D_{\F_p[G/H_i]}}(\D_{\F_p[G/H_i]} \otimes_{\F_p[ G]}M ).\end{equation}
 Observe that $\F_p[[\mathbf F/\mathbf F_i]]$ is also a Noetherian domain, and so the division closure of $\F_p[G/H_i]$ in   $\mathcal Q(\F_p[[\mathbf F/\mathbf F_i]])$ is isomorphic to $\D_{\F_p[G/H_i]}$ (as a $\F_p[G]$-ring). Therefore,
\begin{equation}\label{equa3} \dim_{\mathcal Q(\F_p[[\mathbf F/\mathbf F_i]])}(\mathcal Q(\F_p[[\mathbf F/\mathbf F_i]]) \otimes_{\F_p[ G]}M )=\dim_{\D_{\F_p[G/H_i]}}(\D_{\F_p[G/H_i]} \otimes_{\F_p[ G]}M ).\end{equation}
 Using Proposition \ref{limitprop}  and  arguing as in the proof of \cite[Theorem 2.3]{Ja17}, we obtain that
\begin{equation}\label{equa2} \dim_{\F_p[[\mathbf F]]} (\F_p[[\mathbf F]]\otimes_{\F_p[G]} M)= \lim_{i\to \infty} \dim_{\mathcal Q(\F_p[[\mathbf F/F_i]])}(\mathcal Q(\F_p[[\mathbf F/\mathbf F_i]]) \otimes_{\F_p[ G]}M ).\end{equation}
Therefore, putting together (\ref{equa4}), (\ref{equa3}), (\ref{equa2}), (\ref{equa1}) and (\ref{equa5}), we obtain that 
\begin{multline*}
\dim_{\D_{\F_p[ G]}} ( \D_{\F_p[ G]}\otimes_{\F_p[G]} M)=\dim_{\D_G} ( \D_{G}\otimes_{\F_p[G]} M)= \\ 
\lim_{i\to \infty}\frac{\dim_{\F_p}(\F_p \otimes_{\F_p[ G_i]}M )}{|G:G_i|}.\end{multline*}
Applying Proposition \ref{cohndiv}, we obtain that the divison rings $\D_G$ and $\D_{\F_p[G]}$ are isomorphic as  $\F_p[G]$-rings. \end{proof}

An alternative approach of proving  that $\D_G$ is isomorphic to $\D_{\F_p[G]}$ as $\F_p[G]$-ring  is taken in \cite[Lemma 7.5.5]{Mo21}, where  the result is proved by using a variation  from \cite[Theorem 6.3]{Sath09} of the uniqueness of  Hughes-free division rings \cite{Hu70} (see also \cite{DHS08}).
\subsection{Mod-$p$ $L^2$-Betti numbers}

$L^2$-Betti numbers play an important role in the solution of many problems in group theory. In the last years there was an attempt to develop a theory of mod-$p$ $L^2$-Betti numbers for different families of groups (see \cite{Jasurv19}). If $G$ is torsion-free and satisfies the Atiyah conjecture, P. Linnell \cite{Li93} showed that $L^2$-Betti numbers of $G$
 can be defined as $$b_i^{(2)}(G)= \dim_{\D(G) }H_i(G; \D(G)),$$ where $\D(G)$ is the division ring obtained as the division closure of $\Q[G]$ in the ring of affilated operators $\mathcal U(G)$. It turns out that if $G$ is residually torsion-free nilpotent, $\D(G)$ is isomorphic to the universal division ring of fractions $\D_{\Q[G]}$ (see, for example, \cite{JL19}). Therefore, we have 
$$b_i^{(2)}(G)= \dim_{\D_{\Q[G]} }H_i(G; \D_{\Q[G]}).$$
Thus, by analogy,  if  $G$ is  a residually torsion-free nilpotent group, we define the $i$th  mod-$p$ $L^2$-Betti number of $G$ as
$$\beta^{\operatorname{mod}p}_i(G)=\dim_{\D_{\F_p[G]}} H_i(G; \D_{\F_p[G]}).$$
In the case, where $G$ is a subgroup of a free pro-$p$ group, we also obtain the following formula, which can be seen as a   mod-$p$ analogue  of the L\"uck approximation theorem \cite{Lu94}.
\begin{pro} \label{beta1limit} Let $\mathbf F$ be a finitely generated free pro-$p$ group and let $G$ be a  subgroup of $\mathbf F$  of type $FP_k$ for some $k\ge 1$.  Let $\mathbf F=\mathbf N_1>\mathbf N_2>\mathbf N_3> \ldots$ be a chain of open normal subgroups of $\mathbf F$ with trivial intersection. We put $G_j=G\cap \mathbf  N_j$. Then 
$$\beta_k^{\operatorname{mod}p}(G)= \displaystyle \lim_{j\to \infty} \frac{\dim_{\F_p} H_k(G_j;\F_p)}{|G:G_j|}.$$
\end{pro}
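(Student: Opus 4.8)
The plan is to deduce Proposition \ref{beta1limit} from the machinery already assembled, chiefly Proposition \ref{divclousure} together with the identification of $\D_{\F_p[G]}$ with $\D_G$ inside $\D_{\F_p[[\mathbf F]]}$. First I would unwind the definition: $\beta_k^{\operatorname{mod}p}(G)=\dim_{\D_{\F_p[G]}} H_k(G;\D_{\F_p[G]})$, and since $G$ is of type $FP_k$ it has a projective (indeed, when $G$ is finitely generated, free) resolution $P_\bullet$ of $\F_p$ over $\F_p[G]$ that is finitely generated in degrees $\le k$. Tensoring with the flat (division-ring) module $\D_{\F_p[G]}$ computes $H_k(G;\D_{\F_p[G]})$, and flatness lets us identify this with the homology of the complex $\D_{\F_p[G]}\otimes_{\F_p[G]}P_\bullet$ in degree $k$. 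Because each $H_j(G;\F_p)$ with $j\le k$ is built from the finitely generated modules of this complex, one can express $\dim_{\D_{\F_p[G]}}H_k$ as an alternating-sum / rank expression in the boundary maps, i.e. in terms of the extended Sylvester module rank function $\dim_{\D_{\F_p[G]}}$ evaluated on the finitely presented modules $H_j(G;\F_p)$ for $j\le k$ and the free pieces of $P_\bullet$.

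Next I would pass to the finite-index approximants. For the chain $(G_j)$, Shapiro's lemma gives $H_k(G_j;\F_p)\cong H_k(G;\F_p[G/G_j])$, and $\dim_{\F_p}$ of these finite vector spaces, divided by $|G:G_j|$, is exactly the value of the Sylvester module rank function $\dim_{\F_p[G/G_j]}$ (normalized $\F_p$-dimension) applied to the relevant modules. The heart of the argument is then Proposition \ref{divclousure}, which asserts precisely that for every finitely generated $\F_p[G]$-module $M$,
$$\dim_{\D_{\F_p[G]}}(\D_{\F_p[G]}\otimes_{\F_p[G]}M)=\lim_{i\to\infty}\frac{\dim_{\F_p}(\F_p\otimes_{\F_p[G_i]}M)}{|G:G_i|},$$
i.e. the normalized $\F_p[G/G_i]$-dimensions converge to the $\D_{\F_p[G]}$-dimension on all finitely generated modules. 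Applying this to each of the finitely many modules appearing in the degree-$\le k$ part of the resolution (and to $H_j(G;\F_p)$, $j<k$), and using that finite alternating sums and the rank formula $\rk(A)=n-\dim M$ pass to the limit, I would conclude that the normalized Betti numbers $\dim_{\F_p}H_k(G_j;\F_p)/|G:G_j|$ converge to $\dim_{\D_{\F_p[G]}}H_k(G;\D_{\F_p[G]})=\beta_k^{\operatorname{mod}p}(G)$.

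The main obstacle I anticipate is bookkeeping the homological algebra at the top degree $k$ when $G$ is only of type $FP_k$ rather than $FP_\infty$: the $(k+1)$-st term of a resolution need not be finitely generated, so one must be careful to express $H_k$ using only the finitely generated data in degrees $\le k$. The standard device is to truncate — replace the resolution by $\cdots\to P_k\to P_{k-1}\to\cdots\to P_0$ and note that $H_k$ of the truncated complex is the kernel $Z_k$, while $\operatorname{im}(P_{k+1}\to P_k)=B_k$ is a finitely generated submodule (since $Z_k/B_k=H_k(G;\F_p)$ is finite, hence $B_k$ has finite index in $Z_k$, and $Z_k$ is finitely generated because $\F_p[G]$ is coherent when $G$ is free — or one uses the $FP_k$ hypothesis directly to get $Z_k$ finitely generated). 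Then both $\D_{\F_p[G]}\otimes-$ and $\F_p[G/G_j]\otimes-$ applied to the four-term data $(P_k,P_{k-1},Z_k,B_k)$ are controlled by Proposition \ref{divclousure}, and comparing dimensions yields the claim. A secondary point requiring care is that $\dim_{\F_p}\F_p\otimes_{\F_p[G_j]}M$ for a \emph{finitely presented} $\F_p[G]$-module $M$ equals $|G:G_j|\cdot\dim_{\F_p[G/G_j]}M$ exactly (not just asymptotically), which is immediate from the definitions, and that the extended rank function on the finitely generated module $H_{k-1}(G;\F_p)$ (and similar) is genuinely the limit — this is exactly what Proposition \ref{limitfg}, invoked inside the proof of Proposition \ref{divclousure}, is designed to guarantee.
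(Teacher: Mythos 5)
Your proposal is correct and follows essentially the same route as the paper: use the $FP_k$ hypothesis to express $H_k(G;-)$ through finitely generated syzygies and finite-rank free modules, then feed those finitely generated modules into Proposition \ref{divclousure} (whose validity for finitely \emph{generated}, not just finitely presented, modules is exactly the point) and pass the alternating sum to the limit. The paper's bookkeeping is slightly cleaner — it uses the four-term exact sequence $0\to H_k(G;M)\to M\otimes R_k\to M^{n_k}\to M\otimes R_{k-1}\to 0$ coming from $\Tor_1(M,R_{k-1})$, which avoids both the module $B_k$ and your (unneeded and in fact incorrect) appeal to flatness of $\D_{\F_p[G]}$ over $\F_p[G]$.
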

\begin{proof}
There exists a resolution of the $\F_p[G]$-module $\F_p$ 
$$0\to R_k\to \F_p[G]^{n_{k}}\xrightarrow{\phi_k} \ldots \to \F_p[G]^{n_1}\xrightarrow{\phi_1}\F_p\xrightarrow{\phi_0} 0$$
with $R_k$ finitely generated.
The relevant  part of the sequence for calculation of $H_k(G;*)$ is the following exact sequence
$$ 0\to R_k\to \F_p[G]^{n_{k}}\to R_{k-1}\to 0,$$
($R_{k-1}=\im \phi_k=\ker \phi_{k-1}$), because for any right $\F_p[G]$-module $M$ we have
$$0\to H_k(G;M)\to M\otimes_{\F_p[G]} R_k\to  M^{n_{k}}\to M\otimes_{\F_p[G]} R_{k-1}\to 0  $$
Then we obtain that 
$$ \beta_k^{\operatorname{mod}p}(G)= \dim_{\D_{\F_p[G]}} H_k(G;\D_{\F_p[G]})=\dim_{\F_p[G]} R_k-{n_{k}}+\dim_{\F_p[G]} R_{k-1} \textrm{\ and\ }$$
$$ \dim_{\F_p} H_k(G_j;\F_p)=\dim_{\F_p}(\F_p\otimes_{\F_p[G_j]} R_k)-n_{k}|G:G_j|+ \dim_{\F_p}(\F_p\otimes_{\F_p[G_j]} R_{k-1}).$$
 
 Thus, Proposition \ref{divclousure} implies the proposition.\end{proof}
In this paper we will work only with $ \beta_1^{\operatorname{mod}p}(G)$. Observe that in this case, if $G$ is infinite,  we have   the formula
$$ \beta_1^{\operatorname{mod}p}(G)=\dim_{\F_p[G]} I_G  -1.$$
Also observe that if $A$ is a  non-trivial  torsion-free abelian group, then  since $\D_{\F_p[A]}=\mathcal Q(\F_p[A])$ is the field of fractions of $\F_p[A]$,
\begin{equation}\label{abelian} \beta_1^{\operatorname{mod}p}(A)=\dim_{\F_p[A]} I_A  -1=\dim_{\mathcal Q(\F_p[A])}(\mathcal Q(\F_p[A])\otimes_{\F_p[A]} I_A)-1=0\end{equation}
\subsection{Strong embeddings into   free  pro-$p$ groups}
Assume that a finitely generated group $G$ is a subgroup of a free pro-$p$ group $\mathbf F$. Since a closed subgroup of a free pro-$p$ group is free  (see \cite[Corollary 7.7.5]{RZ10}), changing $\mathbf F$ by the closure of $G$ in $\mathbf F$, we may assume that $G$ is dense in $\mathbf F$. 
Let $\mathbf F=\mathbf N_1>\mathbf N_2>\mathbf N_3> \ldots$ be a chain of open normal subgroups of $\mathbf F$ with trivial intersection and put $G_j=G\cap \mathbf  N_j$. 
Observe that  the closure of $G_j$ in $\mathbf F$ is equal to $\mathbf N_j$, and so
\begin{equation}\label{discpro}
|G_j:G_j^p[G_j,G_j]|\ge |\mathbf N_j:\mathbf N_j^p[\mathbf N_j,\mathbf N_j]|.
\end{equation}
Denote by $d(\mathbf F)$ the number of profinite generators of $\mathbf F$. By \cite[Theorem 7.8.1]{RZ10}, $$d(\mathbf F)= \log_p |\mathbf F:\mathbf F^p[\mathbf F,\mathbf F]|$$ and by the index Schreier formula $$d(\mathbf N_j)=(d(\mathbf F)-1)|\mathbf F:\mathbf N_j|+1.$$ Therefore, we obtain
\begin{multline*}
\dim_{\F_p} H_1(G_j;\F_p)=\log_p |G_j:G_j^p[G_j,G_j]|\ge \log_p |\mathbf N_j:\mathbf N_j^p[\mathbf N_j,\mathbf N_j]|=\\ d(\mathbf N_j)=(d(\mathbf F)-1)|\mathbf F:\mathbf N_j|+1=(d(\mathbf F)-1)|G:G_j|+1.\end{multline*}
Thus,  Proposition \ref{beta1limit} implies the following corollary.

\begin{cor} Let $G$ be a finitely generated dense subgroup of a free pro-$p$ group $\mathbf F$. Then
\begin{equation}
\label{formbeta}
\dim_{\F_pG}I_G=\beta_1^{\operatorname{mod}p}(G)+1\ge d(\mathbf F).\end{equation}
\end{cor}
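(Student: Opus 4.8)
The plan is to combine the mod-$p$ L\"uck approximation formula of Proposition \ref{beta1limit} with the Schreier index formula for free pro-$p$ groups; in fact all the needed ingredients already appear in the computation preceding the statement, so the proof is a short assembly. First I would record the identity $\dim_{\F_p[G]} I_G = \beta_1^{\operatorname{mod}p}(G) + 1$: if $\mathbf F$ is trivial the corollary is vacuous, so assume $\mathbf F \ne 1$, whence $G$, being a dense subgroup of the infinite group $\mathbf F$, is infinite; then applying $\dim_{\F_p[G]}$ to the exact sequence $0 \to I_G \to \F_p[G] \to \F_p \to 0$ and using the definition $\beta_1^{\operatorname{mod}p}(G) = \dim_{\D_{\F_p[G]}} H_1(G;\D_{\F_p[G]})$ gives $\beta_1^{\operatorname{mod}p}(G) = \dim_{\F_p[G]} I_G - 1$. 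So it remains to prove $\beta_1^{\operatorname{mod}p}(G) \ge d(\mathbf F) - 1$.

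For this I would fix a chain $\mathbf F = \mathbf N_1 > \mathbf N_2 > \cdots$ of open normal subgroups of $\mathbf F$ with trivial intersection and set $G_j = G \cap \mathbf N_j$. Since $G$ is dense in $\mathbf F$, the closure of $G_j$ in $\mathbf F$ is $\mathbf N_j$ and $|G : G_j| = |\mathbf F : \mathbf N_j|$; in particular the induced map on Frattini quotients is surjective, which is exactly inequality (\ref{discpro}), so that
\[
\dim_{\F_p} H_1(G_j;\F_p) = \log_p |G_j : G_j^p[G_j,G_j]| \ge \log_p |\mathbf N_j : \mathbf N_j^p[\mathbf N_j,\mathbf N_j]| = d(\mathbf N_j).
\]
Applying the Schreier formula to the open subgroup $\mathbf N_j$ of the free pro-$p$ group $\mathbf F$ gives $d(\mathbf N_j) = (d(\mathbf F) - 1)|\mathbf F : \mathbf N_j| + 1 = (d(\mathbf F) - 1)|G : G_j| + 1$.

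Finally I would divide by $|G : G_j|$ and let $j \to \infty$. Since $G$ is finitely generated it is of type $FP_1$, so Proposition \ref{beta1limit} applies with $k = 1$ and gives
\[
\beta_1^{\operatorname{mod}p}(G) = \lim_{j\to\infty} \frac{\dim_{\F_p} H_1(G_j;\F_p)}{|G:G_j|} \ge \lim_{j\to\infty} \frac{(d(\mathbf F)-1)|G:G_j| + 1}{|G:G_j|} = d(\mathbf F) - 1,
\]
and combined with $\dim_{\F_p[G]} I_G = \beta_1^{\operatorname{mod}p}(G) + 1$ this is the assertion. I do not anticipate a real obstacle: the only step that genuinely uses the hypothesis — and the only place where some care is needed — is the passage from density of $G$ in $\mathbf F$ to the surjectivity of the Frattini quotient map $G_j/G_j^p[G_j,G_j] \to \mathbf N_j/\mathbf N_j^p[\mathbf N_j,\mathbf N_j]$, i.e. inequality (\ref{discpro}).
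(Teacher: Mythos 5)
Your proof is correct and follows essentially the same route as the paper: the paper derives (\ref{formbeta}) by exactly this combination of the surjection onto Frattini quotients coming from $\overline{G_j}=\mathbf N_j$ (inequality (\ref{discpro})), the Schreier index formula $d(\mathbf N_j)=(d(\mathbf F)-1)|\mathbf F:\mathbf N_j|+1$, and the limit formula of Proposition \ref{beta1limit}. The identity $\dim_{\F_p[G]}I_G=\beta_1^{\operatorname{mod}p}(G)+1$ for infinite $G$ is likewise recorded in the paper just as you obtain it, so there is nothing to add.
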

This result suggests the following definition.
\begin{definition} \label{defsep}
We say that
an embedding $G\hookrightarrow \mathbf F$ of a finitely generated group $G$ into a free pro-$p$ group $\mathbf F$ is {\bf strong} if   $G$ is dense  in $\mathbf F$ and $\dim_{\F_pG}I_G=\beta_1^{\operatorname{mod}p}(G)+1 = d(\mathbf F)$.   

 A finitely generated group $G$ is called {\bf strongly embeddable in a free pro-$p$ group} (SE($p$) for simplicity) if there are  a free pro-$p$ group $\mathbf F$ and a strong embedding  $ G\hookrightarrow \mathbf F$. 
 \end{definition}
 
Let $G$ be  a parafree group. Observe that $G$ is residually-$p$ for every prime $p$. Thus, if $G$ is finitely generated, its  pro-$p$ completion  $G_{\widehat p}$ is a finitely generated free pro-$p$ group    and $G$ is a subgroup of $G_{\widehat p}$.  In this case the inequality (\ref{discpro}) is an equality, and so in the same way as we obtained the inequality (\ref{formbeta}), we obtain  that $\beta_1^{\operatorname{mod}p}(G)= d( G_{\widehat p})-1$. Therefore,   the embedding $G\hookrightarrow G_{\widehat p}$  is strong. Thus, all finitely generated parafree groups are SE($p$). On the other hand, not every subgroup of a parafree group is SE($p$). For example,   the fundamental group  of an oriented surface  of genus greater than 1   is not SE($p$).  However    the fundamental group  of an oriented surface  of genus greater than 2 can be embedded in a parafree group (see \cite[Section 4.1]{Ba69}).

By  \cite[Proposition 7.5]{BR15}, if $G$ is a finitely generated dense subgroup of a finitely generated free pro-$p$ group $\mathbf F$, then $b_1^{(2)}(G)\ge d(\mathbf F)-1$. On the other hand, by \cite[Theorem 1.6]{EL14} and Proposition \ref{beta1limit} , $b_1^{(2)}(G)\le \beta_1^{\operatorname{mod}p}(G)$. Thus, if $G\hookrightarrow \mathbf F$ is a strong embedding, we have $b_1^{(2)}(G)=\beta_1^{\operatorname{mod}p}(G)=d(\mathbf F)-1$.

\section{On $\mathcal D$-torsion-free modules.}\label{tech}

\subsection{General results}
Let $R$ be a ring and let $R\hookrightarrow \mathcal D$ be an embedding of $R$ into a division ring $\mathcal D$. Let $M$ be a left $R$-module. We say that $M$ is {\bf $\mathcal D$-torsion-free} if the canonical map $$M\to \mathcal D\otimes_R M, \ m\mapsto 1\otimes m,$$ is injective.  
The following lemma describes several   equivalent definitions of $\mathcal D$-torsion-free modules.

\begin{lem} \label{dtorsionfree}The following statements for a left $R$-module $M$ are equivalent.
\begin{enumerate}
\item[(a)] $M$ is $\mathcal D$-torsion-free.
\item[(b)] There are a $\D$-module $N$ and  an injective homomorphism $\varphi:M\to N$ of $R$-modules.
\item[(c)] For any $0\ne m\in M$, there exists a homomorphism of $R$-modules $\varphi:M\to \D$, such that $\varphi(m)\ne 0$.

\end{enumerate}

\end{lem}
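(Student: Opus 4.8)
The plan is to prove the three implications $(a)\Rightarrow(b)\Rightarrow(c)\Rightarrow(a)$, which together give the claimed equivalence.

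First, $(a)\Rightarrow(b)$ is immediate: if $M$ is $\mathcal D$-torsion-free, take $N=\mathcal D\otimes_R M$, which is a $\mathcal D$-module, and $\varphi:M\to N$ the canonical map $m\mapsto 1\otimes m$, which is injective by hypothesis. So nothing to do here beyond recording it.

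Next, $(b)\Rightarrow(c)$. Suppose $\varphi:M\to N$ is an injective $R$-module homomorphism into a $\mathcal D$-module $N$. Given $0\ne m\in M$, we have $0\ne\varphi(m)=:n\in N$. Since $N$ is a $\mathcal D$-module, it is free as a $\mathcal D$-module (every module over a division ring is free), so fix a $\mathcal D$-basis and let $\pi:N\to\mathcal D$ be the projection onto a coordinate on which $n$ has a nonzero entry; $\pi$ is a homomorphism of $\mathcal D$-modules, hence in particular of $R$-modules, and $\pi(n)\ne 0$. Then $\psi:=\pi\circ\varphi:M\to\mathcal D$ is an $R$-module homomorphism with $\psi(m)=\pi(n)\ne 0$, as required. (If one prefers to avoid choosing a basis of a possibly infinite-dimensional $\mathcal D$-space, one can instead take the cyclic $\mathcal D$-submodule $\mathcal D n\subseteq N$, which is $\cong\mathcal D$ since $n\ne 0$, and use any $\mathcal D$-linear splitting $N\to\mathcal D n$; either way the point is that $\mathcal D$-modules have enough $\mathcal D$-linear functionals separating points.)

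Finally, $(c)\Rightarrow(a)$. We must show the canonical map $M\to\mathcal D\otimes_R M$ is injective, i.e. that any $0\ne m\in M$ has $1\otimes m\ne 0$ in $\mathcal D\otimes_R M$. By $(c)$ choose an $R$-homomorphism $\varphi:M\to\mathcal D$ with $\varphi(m)\ne 0$. Since $\mathcal D$ is a $\mathcal D$-module, $\varphi$ extends to a $\mathcal D$-linear map $\tilde\varphi:\mathcal D\otimes_R M\to\mathcal D$, $d\otimes x\mapsto d\varphi(x)$ (this is well-defined by the universal property of tensor product, using that $\varphi$ is $R$-linear). Then $\tilde\varphi(1\otimes m)=\varphi(m)\ne 0$, so $1\otimes m\ne 0$. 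This completes the cycle.

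The argument is entirely formal; there is no real obstacle. The only point requiring a moment's care is the use in $(b)\Rightarrow(c)$ of the fact that a nonzero element of a module over a division ring can be separated from $0$ by a $\mathcal D$-linear functional — which is just the statement that modules over division rings are free (have bases), together with coordinate projections, or equivalently that $\mathcal D n\cong\mathcal D$ is a direct summand. I would state this cleanly and move on.
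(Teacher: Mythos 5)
Your proof is correct and complete; the paper itself declares this lemma straightforward and leaves it to the reader, and your cycle $(a)\Rightarrow(b)\Rightarrow(c)\Rightarrow(a)$ is exactly the standard argument one would write out. The two points needing care — separating a nonzero element of a $\D$-module by a $\D$-linear functional, and the well-definedness of $d\otimes x\mapsto d\varphi(x)$ on $\D\otimes_R M$ — are both handled properly.
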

\begin{proof} The proof is straightforward and we leave it to the reader.\end{proof}
Let $M$ be a left $R$-module. Recall that we use  $\dim_{\D} M $ to denote the dimension of $\D\otimes_RM$ as a $\D$-module. Observe that if  $\dim_{\D} M $ is finite, it is also equal to the dimension of $\Hom_R(M, \D)$ as a right $\mathcal D$-module.
\begin{lem} \label{dtf}
Let $0\to M_1\to M_2\to M_3\to 0$ be an exact sequence of $R$-modules. Assume that 
\begin{enumerate}
\item $M_1$ and $M_3$ are $\D$-torsion-free, 
\item $\dim_{\D} M_1$ and $\dim_{\D} M_3$ are finite and 
\item $\dim_{\D} M_2= \dim_{\D} M_1+\dim_{\D} M_3$.
\end{enumerate}
Then $M_2$ is also $\D$-torsion-free.
\end{lem}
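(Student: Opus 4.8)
The plan is to exploit the two characterizations from Lemma \ref{dtorsionfree} together with the additivity of $\dim_{\D}$ on short exact sequences. First I would apply the functor $\Hom_R(-,\D)$ to the given sequence $0\to M_1\to M_2\to M_3\to 0$. Since $\D$ is a $\D$-module, and in particular an injective... actually no---$\D$ need not be injective as an $R$-module, so the functor $\Hom_R(-,\D)$ is only left exact in general. So instead I would work directly with the base-change functor $\D\otimes_R(-)$. The key observation is that $\D$ is flat over $R$ is \emph{not} known either. So the right approach is the following: apply $\D\otimes_R(-)$ to get an exact sequence $\D\otimes_R M_1\to \D\otimes_R M_2\to \D\otimes_R M_3\to 0$ (right exactness), and observe that hypothesis (3) forces the left map to be injective (a dimension count: if $\dim_\D M_1 + \dim_\D M_3 = \dim_\D M_2$ and the sequence $\D\otimes M_1 \to \D\otimes M_2 \to \D\otimes M_3 \to 0$ is exact, then the image of $\D\otimes M_1$ in $\D\otimes M_2$ has dimension exactly $\dim_\D M_2 - \dim_\D M_3 = \dim_\D M_1$, which equals the dimension of $\D\otimes M_1$ itself, hence $\D\otimes M_1 \to \D\otimes M_2$ is injective). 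So we get a genuine short exact sequence $0\to \D\otimes_R M_1\to \D\otimes_R M_2\to \D\otimes_R M_3\to 0$ of $\D$-modules.

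Next I would chase the commutative diagram with vertical maps the canonical maps $\iota_j: M_j \to \D\otimes_R M_j$:
\begin{equation*}
\begin{array}{ccccccccc}
0 & \to & M_1 & \to & M_2 & \to & M_3 & \to & 0\\
 & & \downarrow\iota_1 & & \downarrow\iota_2 & & \downarrow\iota_3 & & \\
0 & \to & \D\otimes_R M_1 & \to & \D\otimes_R M_2 & \to & \D\otimes_R M_3 & \to & 0
\end{array}
\end{equation*}
Both rows are exact (the top by hypothesis, the bottom by the dimension argument above), and $\iota_1$, $\iota_3$ are injective by hypothesis (1). A routine diagram chase (a version of the snake lemma / four lemma) then shows $\iota_2$ is injective: given $m\in M_2$ with $\iota_2(m)=0$, its image in $M_3$ maps to $0$ in $\D\otimes_R M_3$, so by injectivity of $\iota_3$ it is $0$ in $M_3$, hence $m$ comes from some $m_1\in M_1$; then the image of $\iota_1(m_1)$ in $\D\otimes_R M_2$ is $\iota_2(m)=0$, and since $\D\otimes_R M_1\to \D\otimes_R M_2$ is injective, $\iota_1(m_1)=0$, so $m_1=0$ by injectivity of $\iota_1$, whence $m=0$. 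This gives that $\iota_2$ is injective, i.e. $M_2$ is $\D$-torsion-free.

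The only subtle point---the one I would expect to be the main obstacle---is the justification that the dimension hypothesis (3) actually upgrades right exactness of $\D\otimes_R(-)$ to injectivity on the left, which requires knowing that the relevant dimensions are finite (hypothesis (2)) so that the additivity $\dim_\D(\D\otimes M_2) = \dim_\D(\ker) + \dim_\D(\D\otimes M_3)$ with $\dim_\D(\ker) = \dim_\D(\mathrm{im}(\D\otimes M_1))\le \dim_\D(\D\otimes M_1)$ forces equality and hence $\D\otimes M_1 \hookrightarrow \D\otimes M_2$. Everything else is a formal diagram chase. I would also remark that hypothesis (2) is genuinely used here (without finiteness the dimension cancellation is not valid), and note that one can alternatively phrase the argument via $\Tor^R_1(\D, M_3)$: the sequence is exact on the left iff the connecting map $\Tor^R_1(\D,M_3)\to \D\otimes_R M_1$ vanishes, and the dimension count shows its image is zero since it lands in $\D\otimes_R M_1$ with the right dimensions matching up.
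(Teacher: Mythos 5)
Your proof is correct, and it is in effect the dual of the paper's argument. You apply $\D\otimes_R(-)$ and work with the definition of $\D$-torsion-freeness (injectivity of $M\to\D\otimes_R M$, i.e.\ characterization (a) of Lemma \ref{dtorsionfree}), whereas the paper applies $\Hom_R(-,\D)$ and works with characterization (c) (existence of separating functionals into $\D$). In both cases the decisive step is the same finite-dimension cancellation over the division ring: you use hypotheses (2) and (3) to show that $\D\otimes_R M_1\to\D\otimes_R M_2$ is injective (a surjection of $\D$-modules of equal finite dimension is an isomorphism), while the paper uses them to show that the restriction map $\Hom_R(M_2,\D)\to\Hom_R(M_1,\D)$ is surjective, via the identity between $\dim_{\D}M$ and the $\D$-dimension of $\Hom_R(M,\D)$ when $\dim_{\D}M$ is finite. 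What each route buys: the Hom version needs no diagram chase, since once restriction is surjective one directly produces a functional not vanishing at a given $0\ne m\in M_2$ (first trying to separate its image in $M_3$, and otherwise extending a functional from $M_1$); your tensor version avoids any discussion of $\Hom_R(M,\D)$ as a $\D$-module, at the cost of the (routine) four-lemma chase. Your dimension count is complete and correctly identifies where the finiteness hypothesis is used, so there is no gap.
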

\begin{proof} We are going to use the third characterization of $\D$-torsion-free modules from Lemma \ref{dtorsionfree}.
Consider the following exact sequence of right $\mathcal D$-modules.
$$0\to \Hom_R(M_3,\D)\to \Hom_R(M_2,\D)\to \Hom_R(M_1,\D).$$
Since $\dim_{\D} M_2= \dim_{\D} M_1+\dim_{\D} M_3$, the last map is surjective.
 
Let $m\in M_2$. If $m\not \in M_1$, then since $M_3$ is $\mathcal D$-torsion-free, there exists $\varphi \in \Hom_R(M_2/M_1, \D)$ such that $\varphi(m+M_1)\ne 0$. Hence 
there exists   $\widetilde \varphi\in  \Hom_R(M_2,\D) $, such that $\widetilde \varphi(m)\ne 0$.

If $m\in M_1$,  then since $M_1$ is $\mathcal D$-torsion-free, there exists $\varphi \in \Hom_R(M_1, \D)$ such that $\varphi(m)\ne 0$.  Using that  the restriction map 
$$ \Hom_R(M_2,\D)\to \Hom_R(M_1,\D)$$ is surjective, we obtain again that there exists $\widetilde \varphi\in  \Hom_R(M_2,\D) $, such that $\widetilde \varphi(m)\ne 0$.\end{proof}
In the calculations of $\dim_{\D}$  the following elementary lemma will be useful.

\begin{lem}\label{-1}
Let $\D$ be a division $R$-ring and $M$ be a $\D$-torsion-free $R$-module of finite $\D$-dimension.   Let $L$ be a non-trivial $R$-submodule of $M$. Then  $\dim_{\D} (M/L)<\dim_{\D} M$. Moreover, if  $\dim_{\D} L=1$, then $\dim_{\D} (M/L)=\dim_{\D} M-1$.
\end{lem}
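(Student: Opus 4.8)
The plan is to use the third characterization of $\D$-torsion-free modules from Lemma~\ref{dtorsionfree} together with the dimension count provided by exactness. Since $M$ is $\D$-torsion-free of finite $\D$-dimension, applying the exact functor $\D\otimes_R -$ (which is exact because $\D$ is a field, hence flat, over itself — more precisely, we only need right-exactness and the embedding $M\hookrightarrow\D\otimes_R M$) to the short exact sequence $0\to L\to M\to M/L\to 0$ gives a right exact sequence of $\D$-vector spaces $\D\otimes_R L\to \D\otimes_R M\to \D\otimes_R (M/L)\to 0$. Hence $\dim_{\D}(M/L) = \dim_{\D} M - \dim_{\D}(\im(\D\otimes_R L\to \D\otimes_R M))$, so it suffices to show that the image of $\D\otimes_R L\to \D\otimes_R M$ is nonzero (for the strict inequality) and one-dimensional when $\dim_{\D} L = 1$.

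For the first assertion: pick $0\neq \ell\in L$. Because $M$ is $\D$-torsion-free, the canonical map $M\to\D\otimes_R M$ is injective, so the image of $\ell$ in $\D\otimes_R M$ is nonzero. But this image factors through $\D\otimes_R L$ (it is the image of $1\otimes\ell$), so $\im(\D\otimes_R L\to\D\otimes_R M)\neq 0$, giving $\dim_{\D}(M/L)\le \dim_{\D} M - 1 < \dim_{\D} M$. For the moreover part, when $\dim_{\D} L = 1$ we have $\dim_{\D}(\D\otimes_R L) = 1$, so its image in $\D\otimes_R M$ has dimension at most $1$; combined with the nonvanishing just established, the image is exactly one-dimensional, whence $\dim_{\D}(M/L) = \dim_{\D} M - 1$.

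There is really no serious obstacle here; the only point that needs a moment's care is the factorization of $1\otimes\ell\in\D\otimes_R M$ through $\D\otimes_R L$ and the use of $\D$-torsion-freeness of $M$ to conclude $1\otimes\ell\neq 0$ — this is exactly condition (a) of Lemma~\ref{dtorsionfree}. One should also note that $\dim_{\D} L$ is automatically finite (indeed at most $\dim_{\D} M$) as a submodule image lands in a finite-dimensional space, so the hypothesis $\dim_{\D} L = 1$ is meaningful. The whole argument is a short diagram chase plus a dimension count, so I would write it out directly rather than invoking any further machinery.
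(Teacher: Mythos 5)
Your proof is correct and follows essentially the same route as the paper: torsion-freeness of $M$ forces the image of $\D\otimes_R L$ in $\D\otimes_R M$ to be nonzero, so $\D\otimes_R(M/L)$ is a proper quotient, and right-exactness gives the bound $\dim_{\D}(M/L)\ge\dim_{\D}M-1$ when $\dim_{\D}L=1$. The only blemish is the parenthetical claim that $\D\otimes_R-$ is exact because ``$\D$ is flat over itself'' (it is only right exact in general), but you immediately note that right-exactness is all that is used, so the argument stands.
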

\begin{proof}
Since $M$ is  $\D$-torsion-free, $\D\otimes_R (M/L)$ is a proper quotient of $\D\otimes_R M$. Hence $\dim_{\D} M/L<\dim_{\D} M$. 

Now assume that  $\dim_{\D} L=1$. In this case $\dim_{\D} (M/L)\ge \dim_{\D} M-1$. Therefore,  $\dim_{\D} (M/L)=\dim_{\D} M-1$.\end{proof}

\subsection{$\D_{\F_p[[\mathbf F]]}$-torsion-free modules}
Let $\mathbf F$ be a finitely generated free pro-$p$ group.
The main purpose of this subsection is to prove the following result.
\begin{pro}\label{dffree} Assume that  $1\ne z\in \mathbf F$ is not a proper $p$-power of an element of $\mathbf F$. Denote by $\mathbf Z$ the closed subgroup of $\mathbf F$ generated by $z$. Then the  module $I_{\mathbf F}/I_\mathbf Z^{\mathbf F}$ is $\D_{\F_p[[\mathbf F]]}$-torsion-free.
\end{pro}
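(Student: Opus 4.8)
The strategy is to realize $I_{\mathbf F}/I_{\mathbf Z}^{\mathbf F}$ as an extension built from pieces whose $\D_{\F_p[[\mathbf F]]}$-torsion-freeness and $\D_{\F_p[[\mathbf F]]}$-dimension I can control, and then invoke Lemma~\ref{dtf}. First I would compute the relevant dimensions. Since $\mathbf Z$ is a free pro-$p$ group of rank $1$, Corollary~\ref{freemod} gives that $I_{\mathbf Z}^{\mathbf F}$ is a free $\F_p[[\mathbf F]]$-module of rank $1$; in particular $\dim_{\F_p[[\mathbf F]]} I_{\mathbf Z}^{\mathbf F}=1$, and from the exact sequence $0\to I_{\mathbf Z}^{\mathbf F}\to I_{\mathbf F}\to I_{\mathbf F}/I_{\mathbf Z}^{\mathbf F}\to 0$, together with $\dim_{\F_p[[\mathbf F]]} I_{\mathbf F}=d(\mathbf F)$ (Corollary~\ref{freemod} again, applied to $\mathbf F$), I get $\dim_{\F_p[[\mathbf F]]}(I_{\mathbf F}/I_{\mathbf Z}^{\mathbf F})=d(\mathbf F)-1$, using that $I_{\mathbf F}$ is $\D_{\F_p[[\mathbf F]]}$-torsion-free (it is free, hence embeds in a free, hence in a $\D$-module, by Lemma~\ref{dtorsionfree}(b)) and Lemma~\ref{-1}.

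Next I would produce an embedding. The key structural input is that $z$ is not a proper $p$-th power in $\mathbf F$; this should guarantee that $\mathbf Z$ is a maximal abelian (procyclic, isomorphic to $(\Z_p,+)$ by Lemma~\ref{naxab}) subgroup, and more importantly that the left ideal $I_{\mathbf Z}^{\mathbf F}$ is ``saturated'' in the sense needed. Concretely, by Lemma~\ref{usisom}(b), $I_{\mathbf Z}^{\mathbf F}\cong \F_p[[\mathbf F]]\otimes_{\F_p[[\mathbf Z]]} I_{\mathbf Z}$, and $I_{\mathbf Z}=(z-1)\F_p[[\mathbf Z]]$, so $I_{\mathbf Z}^{\mathbf F}=(z-1)\F_p[[\mathbf F]]$ as a left ideal; thus $I_{\mathbf F}/I_{\mathbf Z}^{\mathbf F}$ is the quotient of the augmentation ideal by the principal left ideal generated by $z-1$. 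To show this quotient embeds into a $\D_{\F_p[[\mathbf F]]}$-module, by Lemma~\ref{dtorsionfree}(c) it suffices to show that for every $0\neq m+I_{\mathbf Z}^{\mathbf F}$ there is an $\F_p[[\mathbf F]]$-homomorphism to $\D_{\F_p[[\mathbf F]]}$ not killing it, i.e.\ that the map $I_{\mathbf F}/I_{\mathbf Z}^{\mathbf F}\to \D_{\F_p[[\mathbf F]]}\otimes I_{\mathbf F}/I_{\mathbf Z}^{\mathbf F}$ is injective; equivalently, that $I_{\mathbf Z}^{\mathbf F}$ equals the preimage in $I_{\mathbf F}$ of $\D_{\F_p[[\mathbf F]]}(z-1)\cap I_{\mathbf F}$, i.e.\ that $I_{\mathbf F}\cap \D_{\F_p[[\mathbf F]]}(z-1) = \F_p[[\mathbf F]](z-1)$.

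The cleanest route to that last identity is via Proposition~\ref{vanTor}: since $\F_p[[\mathbf F]]\cong\F_p\langle\!\langle x_1,\dots,x_d\rangle\!\rangle$ is a semifir (by \cite[Theorem 2.9.4]{Cohfree} and Proposition~\ref{Lazard}), and $I_{\mathbf F}/I_{\mathbf Z}^{\mathbf F}$ would be $\D_{\F_p[[\mathbf F]]}$-torsion-free once we know it, I instead argue the other direction: consider the short exact sequence $0\to I_{\mathbf Z}^{\mathbf F}\to I_{\mathbf F}\to I_{\mathbf F}/I_{\mathbf Z}^{\mathbf F}\to 0$ and apply $\D_{\F_p[[\mathbf F]]}\otimes_{\F_p[[\mathbf F]]}-$. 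By Proposition~\ref{vanTor}, $\Tor_1^{\F_p[[\mathbf F]]}(\D_{\F_p[[\mathbf F]]}, N)=0$ for any submodule $N$ of a $\D_{\F_p[[\mathbf F]]}$-module, so it is enough to exhibit $I_{\mathbf F}/I_{\mathbf Z}^{\mathbf F}$ as such a submodule — which is circular — so instead I use the known $\D$-torsion-freeness of $I_{\mathbf F}$ and of $I_{\mathbf Z}^{\mathbf F}$ (both free) to get $\Tor_1(\D, I_{\mathbf F}/I_{\mathbf Z}^{\mathbf F})$ sitting in a sequence, and compare dimensions: the long exact sequence gives $0\to\Tor_1(\D,I_{\mathbf F}/I_{\mathbf Z}^{\mathbf F})\to \D\otimes I_{\mathbf Z}^{\mathbf F}\to \D\otimes I_{\mathbf F}$, and since $\dim_{\D}(\D\otimes I_{\mathbf Z}^{\mathbf F})=1=\dim_{\F_p[[\mathbf F]]}I_{\mathbf Z}^{\mathbf F}$ equals its image dimension precisely when $z-1$ is nonzero in $\D$ (true, as $z\ne 1$), the connecting map vanishes and $I_{\mathbf F}/I_{\mathbf Z}^{\mathbf F}$ is $\D$-torsion-free by Lemma~\ref{dtf} applied to $0\to I_{\mathbf Z}^{\mathbf F}\to I_{\mathbf F}\to I_{\mathbf F}/I_{\mathbf Z}^{\mathbf F}\to 0$, once the dimension additivity $d(\mathbf F)=1+(d(\mathbf F)-1)$ is in hand. \textbf{The main obstacle} I anticipate is precisely the verification that $z$ not being a proper $p$-power forces $z-1$ to generate a left ideal whose ``$\D$-saturation'' meets $I_{\mathbf F}$ in no more than $\F_p[[\mathbf F]](z-1)$ — that is, ruling out the degenerate behavior that would occur if $z$ were a $p$-th power (where $z-1=(w-1)^p$ up to lower-order terms and the saturation genuinely grows). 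I expect this to come down to showing $\F_p[[\mathbf F]]/(z-1)\F_p[[\mathbf F]]$ is itself $\D_{\F_p[[\mathbf F]]}$-torsion-free, which should follow because $z-1$ is a nonzerodivisor generating a free rank-one left ideal in the semifir $\F_p[[\mathbf F]]$ and hence $z-1$ is a \emph{full} matrix (a nonzero element of a semifir), so it becomes invertible in $\D_{\F_p[[\mathbf F]]}$ and the quotient by it, tensored up, can be tracked exactly via the $\Tor$ vanishing of Proposition~\ref{vanTor}.
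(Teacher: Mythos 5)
Your dimension count is correct and you have correctly isolated the crux (where the hypothesis that $z$ is not a proper $p$-power must enter), but the argument you sketch for that crux does not work, and in fact never uses the hypothesis. That is already fatal: if $z=w^p$ then $z-1=(w-1)^p=(w-1)\cdot(w-1)^{p-1}$ in $\F_p[[\mathbf F]]$ (powers of $w$ commute), which exhibits a nonzero element $(w-1)^{p-1}+I_{\mathbf Z}^{\mathbf F}$ of $I_{\mathbf F}/I_{\mathbf Z}^{\mathbf F}$ killed by $\D_{\F_p[[\mathbf F]]}\otimes-$, so the module is \emph{not} torsion-free in that case. Any proof that does not distinguish the two cases cannot be right.

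Concretely, three steps fail. First, Lemma \ref{dtf} concludes that the \emph{middle} term $M_2$ of a short exact sequence is $\D$-torsion-free when the sub and the quotient are; you apply it to $0\to I_{\mathbf Z}^{\mathbf F}\to I_{\mathbf F}\to I_{\mathbf F}/I_{\mathbf Z}^{\mathbf F}\to 0$ hoping to conclude torsion-freeness of the \emph{quotient}, which the lemma does not give (and which is false in general: a quotient of a free module by a free submodule need not be torsion-free). Second, vanishing of $\Tor_1^{\F_p[[\mathbf F]]}(\D_{\F_p[[\mathbf F]]},-)$ is not the same as $\D$-torsion-freeness; the latter is injectivity of $M\to\D\otimes M$, and your long-exact-sequence computation only controls the former. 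Third, your final claim that $\F_p[[\mathbf F]]/(z-1)\F_p[[\mathbf F]]$ is $\D_{\F_p[[\mathbf F]]}$-torsion-free because $z-1$ is full, hence invertible in $\D_{\F_p[[\mathbf F]]}$, is backwards: invertibility of $z-1$ forces $\D_{\F_p[[\mathbf F]]}\otimes(\F_p[[\mathbf F]]/(z-1)\F_p[[\mathbf F]])=0$, so that quotient is torsion-free only if it is zero, which it is not.

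What is actually needed, and what the paper proves (Lemma \ref{critdf}), is that failure of torsion-freeness of $I_{\mathbf F}/I_{\mathbf Z}^{\mathbf F}$ is \emph{equivalent} to the existence of a factorization $z-1=ab$ with $a,b\in I_{\mathbf F}$ (using that submodules of free profinite $\F_p[[\mathbf F]]$-modules are free, together with Proposition \ref{vanTor} and the dimension count you already have). One then rules out such factorizations by induction on the index of an open normal subgroup $\mathbf N$ witnessing that $z$ is not a $p$-power: pushing a hypothetical factorization forward along a continuous surjection $\widetilde\sigma:\F_p[[\mathbf F]]\to\F_p[[\Z_p]]$ with $z\in\ker\sigma$ forces $a$ or $b$ into $I_{\mathbf H}^{\mathbf F}$ for a proper open subgroup $\mathbf H$, and the equivalence transfers the failure down to $\mathbf H$, contradicting the inductive hypothesis. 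None of this reduction is present in your proposal.
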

Before proving the proposition we have to establish several preliminary results. 

\begin{lem} \label{extprop} Let $\mathbf H$ be an open   subgroup of $\mathbf F$
Let $M$ be a $\F_p[[\mathbf H]]$-module. Then $\F_p[[\mathbf F]]\otimes_{\F_p[[\mathbf H]]} M$ is $\D_{\F_p[[\mathbf F]]}$-torsion-free if and only if $M$ is $\D_{\F_p[[\mathbf H]]}$-torsion-free.

\end{lem}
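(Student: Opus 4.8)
The plan is to exploit the fact that $\mathbf H$ is open in $\mathbf F$, so that by Proposition \ref{passH}(c) we have an isomorphism of bimodules $\D_{\F_p[[\mathbf F]]}\cong \F_p[[\mathbf F]]\otimes_{\F_p[[\mathbf H]]}\D_{\F_p[[\mathbf H]]}$. Write $R=\F_p[[\mathbf H]]$, $S=\F_p[[\mathbf F]]$, $\D=\D_{\F_p[[\mathbf F]]}$ and $\D'=\D_{\F_p[[\mathbf H]]}$, viewed as a subring of $\D$ via Proposition \ref{passH}(a). The key computation is that for any $R$-module $M$,
\[
\D\otimes_S(S\otimes_R M)\cong \D\otimes_R M\cong (\D\otimes_{\D'}\D')\otimes_R M\cong \D\otimes_{\D'}(\D'\otimes_R M),
\]
and since $\D$ is free (hence faithfully flat) as a right $\D'$-module — $\D\cong S\otimes_R\D'$ by Proposition \ref{passH}(c), and $S$ is a free profinite $\F_p[[\mathbf H]]$-module by Lemma \ref{freemodule}, which restricted to the subring $\D'$ gives freeness of $\D$ over $\D'$ — the canonical map $\D'\otimes_R M\to \D\otimes_{\D'}(\D'\otimes_R M)=\D\otimes_R M$ is injective. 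Combining the two, the composite $\D'\otimes_R M\hookrightarrow \D\otimes_R M$ is injective.

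Now consider the two canonical maps $M\to \D'\otimes_R M$ and $S\otimes_R M\to \D\otimes_S(S\otimes_R M)\cong \D\otimes_R M$. The diagram
\[
\begin{array}{ccc}
M & \to & \D'\otimes_R M\\
\downarrow & & \downarrow\\
S\otimes_R M & \to & \D\otimes_R M
\end{array}
\]
commutes, the left vertical arrow is injective because $S$ is free (in particular faithfully flat) over $R$ by Lemma \ref{freemodule}, and the right vertical arrow $\D'\otimes_R M\hookrightarrow\D\otimes_R M$ is injective by the previous paragraph. For the forward implication, if $M$ is $\D'$-torsion-free then the top arrow is injective, so the composite $M\to \D\otimes_R M\cong \D\otimes_S(S\otimes_R M)$ factoring through $S\otimes_R M$ is injective; but this composite is exactly $M\to S\otimes_R M\to \D\otimes_S(S\otimes_R M)$, and chasing a potential kernel element of $S\otimes_R M\to\D\otimes_S(S\otimes_R M)$ lying in the image of $M$ shows $S\otimes_R M$ is $\D$-torsion-free — more cleanly, I would instead verify injectivity of $S\otimes_R M\to\D\otimes_R M$ directly: $\D\otimes_R M=S\otimes_R(\D'\otimes_R M)$ (using $\D\cong S\otimes_R\D'$) and applying the exact functor $S\otimes_R(-)$ of Lemma \ref{freemodule} to the injection $M\hookrightarrow\D'\otimes_R M$ gives the injection $S\otimes_R M\hookrightarrow S\otimes_R(\D'\otimes_R M)=\D\otimes_R M$. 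For the converse, if $S\otimes_R M$ is $\D$-torsion-free, precompose the injection $S\otimes_R M\hookrightarrow\D\otimes_R M$ with $M\hookrightarrow S\otimes_R M$ to see $M\to\D\otimes_R M$ is injective; since this map factors as $M\to\D'\otimes_R M\hookrightarrow\D\otimes_R M$ with the second arrow injective, the first arrow $M\to\D'\otimes_R M$ is injective, i.e.\ $M$ is $\D'$-torsion-free.

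The main point to get right — and the only place where openness of $\mathbf H$ is genuinely used — is the faithful flatness of $\D_{\F_p[[\mathbf F]]}$ over $\D_{\F_p[[\mathbf H]]}$, which rests on Proposition \ref{passH}(c) together with Lemma \ref{freemodule}; everything else is a routine tensor-product chase using that $\F_p[[\mathbf F]]$ is free over $\F_p[[\mathbf H]]$. One should be a little careful that all the bimodule identifications are compatible with the ring inclusions $\F_p[G]\hookrightarrow\F_p[[\mathbf H]]\hookrightarrow\F_p[[\mathbf F]]$ and $\D'\hookrightarrow\D$, but this is guaranteed by the identifications already fixed after Proposition \ref{passH}.
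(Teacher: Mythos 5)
Your argument is correct and follows essentially the same route as the paper: the forward direction rests on Proposition \ref{passH}(c) combined with the freeness of $\F_p[[\mathbf F]]$ as a right $\F_p[[\mathbf H]]$-module, which is exactly the paper's chain of maps $\gamma\circ\beta\circ\alpha$. The only cosmetic difference is in the converse, where the paper simply notes that $M$ is an $\F_p[[\mathbf H]]$-submodule of $\F_p[[\mathbf F]]\otimes_{\F_p[[\mathbf H]]}M$ and restricts scalars along $\D_{\F_p[[\mathbf H]]}\subseteq\D_{\F_p[[\mathbf F]]}$ using Lemma \ref{dtorsionfree}(b), whereas you chase the canonical maps and invoke faithful flatness of $\D_{\F_p[[\mathbf F]]}$ over $\D_{\F_p[[\mathbf H]]}$ (which is automatic, any module over a division ring being free); both come to the same thing.
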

\begin{proof} Assume that $M$ is $\D_{\F_p[[\mathbf H]]}$-torsion-free.
We have  that the map $ M\to \D_{\F_p[[\mathbf H]]}\otimes_{\F_p[[\mathbf H]]}M$ is injective. Then, since $\F_p[[\mathbf F]]$ is a free right $\F_p[[\mathbf H]]$-module, the map
$$\F_p[[\mathbf F]]\otimes_{\F_p[[\mathbf H]]}M\xrightarrow {\alpha}\F_p[[\mathbf F]]\otimes_{\F_p[[\mathbf H]]}( \D_{\F_p[[\mathbf H]]}\otimes_{\F_p[[\mathbf H]]} M)$$ is also injective. 

Consider  the canonical isomorphism between tensor products 
$$\F_p[[\mathbf F]]\otimes_{\F_p[[\mathbf H]]}( \D_{\F_p[[\mathbf H]]}\otimes_{\F_p[[\mathbf H]]} M)\xrightarrow {\beta} (\F_p[[\mathbf F]]\otimes_{\F_p[[\mathbf H]]} \D_{\F_p[[\mathbf H]]})\otimes_{\F_p[[\mathbf H]]} M.$$ 
By Propositopn \ref{passH}(c), 
 $$\D_{\F_p[[\mathbf F]]}\cong \F_p[[\mathbf F]]\otimes_{\F_p[[\mathbf H]]} \D_{\F_p[[\mathbf H]]}$$ 
as $(\F_p[[\mathbf F]],\F_p[[\mathbf H]])$-bimodules. Thus, there exists an isomorphism of $\F_p[[\mathbf F]]$-modules
$$ (\F_p[[\mathbf F]]\otimes_{\F_p[[\mathbf H]]} \D_{\F_p[[\mathbf H)]]})\otimes_{\F_p[[\mathbf H]]}M \xrightarrow {\gamma}  \D_{\F_p[[\mathbf F]]}\otimes_{\F_p[[\mathbf H]]} M.$$ 
We put $\varphi=\gamma\circ \beta\circ \alpha$ and apply Lemma \ref{dtorsionfree}. Since $  \D_{\F_p[[\mathbf F]]}\otimes_{\F_p[[\mathbf H]]} M$ is a $\D_{\F_p[[\mathbf F]]}$-module and $\varphi$ is an injective $\F_p[[\mathbf F]]$-homomorphism, $\F_p[[\mathbf F]]\otimes_{\F_p[[\mathbf H]]} M$ is $\D_{\F_p[[\mathbf F]]}$-torsion-free. 

Another direction of the proposition is clear because $M$ is a $\F_p[[\mathbf H]]$-submodule of $\F_p[[\mathbf F]]\otimes_{\F_p[[\mathbf H]]} M$.\end{proof}

  \begin{lem} \label{critdf} Let $\mathbf H$ be an open normal  subgroup of $\mathbf F$ and assume that $1\ne z\in \mathbf H$. Let $\mathbf Z$ be the closed subgroup of $\mathbf H$ generated by $z$. Then the following are equivalent.
  \begin{enumerate}
  \item [(a)]The  $\F_p[[\mathbf F]]$-module $ I_\mathbf H^{\mathbf F}/I^{\mathbf F}_\mathbf Z$ is not $\D_{\F_p[[\mathbf F]]}$-torsion-free.
  \item [(b)]The  $\F_p[[\mathbf H]]$-module $ I_\mathbf H/I^{\mathbf H}_\mathbf Z$ is not $\D_{\F_p[[\mathbf H]]}$-torsion-free.
    \item[(c)] There are $a\in  I_\mathbf H^{\mathbf F}$ and $b\in I_{\mathbf F}$ such that $ba=z-1$.
  \item [(d)]There are $a\in  I_\mathbf H^{\mathbf F}$ and $b\in I_{\mathbf F}$ such that $ab=z-1$.

  \end{enumerate}

\end{lem}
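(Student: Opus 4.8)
The plan is to prove the chain of implications $(a)\Leftrightarrow(b)$, $(b)\Leftrightarrow(c)$, and $(c)\Leftrightarrow(d)$, so that all four statements are equivalent. For $(a)\Leftrightarrow(b)$, I would invoke Lemma \ref{usisom}(a) (or rather its consequence for the quotient) together with Lemma \ref{extprop}. Concretely, since $\mathbf H$ is open, $\F_p[[\mathbf F]]$ is a free $\F_p[[\mathbf H]]$-module, and by Lemma \ref{usisom} the canonical map $\F_p[[\mathbf F]]\otimes_{\F_p[[\mathbf H]]}(I_{\mathbf H}/I_{\mathbf Z}^{\mathbf H})\to I_{\mathbf H}^{\mathbf F}/I_{\mathbf Z}^{\mathbf F}$ is an isomorphism of $\F_p[[\mathbf F]]$-modules; note here $I_{\mathbf Z}^{\mathbf H}$ makes sense because $\mathbf Z\le\mathbf H$, and $\mathbf Z$ is finitely generated (procyclic) while $\mathbf H$, being open in a finitely generated free pro-$p$ group, is finitely generated free, hence finitely presented, so the hypotheses of Lemma \ref{usisom}(c) are met. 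Then Lemma \ref{extprop} says that the left-hand module is $\D_{\F_p[[\mathbf F]]}$-torsion-free if and only if $I_{\mathbf H}/I_{\mathbf Z}^{\mathbf H}$ is $\D_{\F_p[[\mathbf H]]}$-torsion-free, which is exactly the equivalence of $(a)$ and $(b)$ (in contrapositive form).

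For $(b)\Leftrightarrow(c)$, I would work inside $\F_p[[\mathbf H]]$ and use the identification $I_{\mathbf Z}^{\mathbf H}=\F_p[[\mathbf H]]\cdot(z-1)$, which follows from Lemma \ref{usisom}(b) since $\mathbf Z\cong\Z_p$ is finitely presented and $I_{\mathbf Z}=\F_p[[\mathbf Z]](z-1)$ is free of rank one over $\F_p[[\mathbf Z]]$. Thus $I_{\mathbf H}/I_{\mathbf Z}^{\mathbf H}=I_{\mathbf H}/\F_p[[\mathbf H]](z-1)$. The module $I_{\mathbf H}$ is a free $\F_p[[\mathbf H]]$-module (Proposition \ref{Lazard} / Corollary \ref{freemod}), hence it embeds into $\D_{\F_p[[\mathbf H]]}\otimes I_{\mathbf H}$, i.e.\ $I_{\mathbf H}$ is itself $\D_{\F_p[[\mathbf H]]}$-torsion-free. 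So by the third characterization in Lemma \ref{dtorsionfree}, the quotient $I_{\mathbf H}/\F_p[[\mathbf H]](z-1)$ fails to be $\D_{\F_p[[\mathbf H]]}$-torsion-free precisely when there is an element $x\in I_{\mathbf H}\setminus \F_p[[\mathbf H]](z-1)$ which nonetheless maps to a torsion element of $\D_{\F_p[[\mathbf H]]}\otimes_{\F_p[[\mathbf H]]}(I_{\mathbf H}/\F_p[[\mathbf H]](z-1))$; translating this through the exact sequence $0\to \F_p[[\mathbf H]](z-1)\to I_{\mathbf H}\to I_{\mathbf H}/I_{\mathbf Z}^{\mathbf H}\to 0$ and using $\mathrm{Tor}^{\F_p[[\mathbf H]]}_1(\D_{\F_p[[\mathbf H]]},-)=0$ (Proposition \ref{vanTor}, valid since $\F_p[[\mathbf H]]$ is a semifir), non-torsion-freeness of the quotient is equivalent to the multiplication map $\D_{\F_p[[\mathbf H]]}\xrightarrow{\cdot(z-1)}\D_{\F_p[[\mathbf H]]}$ together with the inclusion $I_{\mathbf H}\hookrightarrow\F_p[[\mathbf H]]$ producing a relation; unwinding, this says exactly that $z-1\in I_{\mathbf F}\cdot I_{\mathbf H}^{\mathbf F}$ —— wait, more carefully: an element of $I_{\mathbf H}$ lying in $\D_{\F_p[[\mathbf H]]}(z-1)\cap I_{\mathbf H}$ but not in $\F_p[[\mathbf H]](z-1)$ gives, after clearing denominators on the left, a relation $b'x=c(z-1)$ with $b',c$ in $\F_p[[\mathbf H]]$; one then has to massage this into the precise form $ba=z-1$ with $a\in I_{\mathbf H}^{\mathbf F}$, $b\in I_{\mathbf F}$. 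Here one uses that $z-1$ is \emph{not} a zero divisor and that $z$ is not a proper $p$-power, to control the leading term and conclude the factor actually lies in the augmentation ideal. This bookkeeping, turning ``$z-1$ becomes a left multiple modulo $I_{\mathbf Z}^{\mathbf H}$ in a way that is visible over $\D$'' into the clean statement $(c)$, is where I expect the real work to be, and it will use the semifir structure of $\F_p\langle\!\langle x_1,\dots,x_d\rangle\!\rangle$ essentially (Proposition \ref{internaldim}, Proposition \ref{vanTor}).

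Finally, $(c)\Leftrightarrow(d)$ should follow by an anti-automorphism argument: the algebra $\F_p[[\mathbf F]]$ carries the involution induced by $f\mapsto f^{-1}$ on group elements, which sends $I_{\mathbf F}$ to $I_{\mathbf F}$, sends $I_{\mathbf H}^{\mathbf F}$ to $I_{\mathbf H}^{\mathbf F}$ (as $\mathbf H$ is normal, in fact any subgroup, $I_{\mathbf H}$ is stable), and sends $z-1$ to $z^{-1}-1=-z^{-1}(z-1)$. Applying this involution to a relation $ba=z-1$ turns it into $a^\ast b^\ast=-z^{-1}(z-1)$ with $a^\ast\in I_{\mathbf H}^{\mathbf F}$, $b^\ast\in I_{\mathbf F}$; multiplying on the left by $-z$ (a unit) and absorbing it into $a^\ast$ (note $z\cdot I_{\mathbf H}^{\mathbf F}=I_{\mathbf H}^{\mathbf F}$) yields the form in $(d)$, and symmetrically back. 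Alternatively, and perhaps more cleanly, I would note that both $(c)$ and $(d)$ say that $z-1$ lies in $I_{\mathbf F}\cdot I_{\mathbf H}^{\mathbf F}$ resp.\ $I_{\mathbf H}^{\mathbf F}\cdot I_{\mathbf F}$, and these coincide because $z-1\in I_{\mathbf H}^{\mathbf F}\cap I_{\mathbf F}$ while $I_{\mathbf H}^{\mathbf F}$ is a two-sided-ish object under the normality of $\mathbf H$ —— but the involution argument is the safest route and avoids subtleties. I would present the involution proof.
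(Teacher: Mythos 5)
Your treatment of (a)$\Leftrightarrow$(b) via Lemma \ref{usisom} and Lemma \ref{extprop}, and of (c)$\Leftrightarrow$(d) via the anti-automorphism $g\mapsto g^{-1}$ (using normality of $\mathbf H$ to identify $I_{\mathbf H}\F_p[[\mathbf F]]$ with $I_{\mathbf H}^{\mathbf F}$), matches the paper. The problem is the remaining equivalence, which is the heart of the lemma and which you explicitly leave as ``bookkeeping'' and ``massaging'': a general relation $b'x=c(z-1)$ obtained by clearing denominators does not yield the very specific factorization $ba=z-1$ with $a\in I_{\mathbf H}^{\mathbf F}$ and $b\in I_{\mathbf F}$, and no amount of manipulation of a single such relation will produce it. The missing idea is structural. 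Set $M=I_{\mathbf H}^{\mathbf F}/I_{\mathbf Z}^{\mathbf F}$ and let $L\le I_{\mathbf H}^{\mathbf F}$ be the preimage of the kernel of $M\to \D_{\F_p[[\mathbf F]]}\otimes M$. One computes $\dim_{\D}(\D\otimes L)=1$ from the exact sequence $0\to L\to I_{\mathbf H}^{\mathbf F}\to \overline M\to 0$, using $\Tor_1^{\F_p[[\mathbf F]]}(\D,\overline M)=0$ (Proposition \ref{vanTor}) and the count $\dim_{\D}(\D\otimes M)=d(\mathbf H)-1$ from Corollary \ref{freemod} and Lemma \ref{-1}. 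Then one invokes the fact that a profinite submodule of a free profinite $\F_p[[\mathbf F]]$-module is free, so $L$ is free of rank one, i.e.\ cyclic: $L=\F_p[[\mathbf F]]a$. Since $z-1\in I_{\mathbf Z}^{\mathbf F}\subseteq L$ this gives $ba=z-1$, and $b\in I_{\mathbf F}$ because $L\ne I_{\mathbf Z}^{\mathbf F}$ forces $b$ to be a non-unit. Without the cyclicity of $L$ there is no route to (c); your sketch never produces it.

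Two smaller points. First, you appeal to ``$z$ is not a proper $p$-power'' to ``control the leading term'', but that hypothesis is not part of Lemma \ref{critdf} at all (it appears only in Proposition \ref{dffree}); its presence in your argument signals that you are proving a different statement. Second, the easy direction (c)$\Rightarrow$(a) also deserves a line: given $ba=z-1$ with $b$ a non-unit, the submodule $\F_p[[\mathbf F]]a/\F_p[[\mathbf F]]ba\cong \F_p[[\mathbf F]]/\F_p[[\mathbf F]]b$ of $M$ is nonzero and killed by $\D\otimes-$, which is what witnesses failure of torsion-freeness; your write-up subsumes this into an unverified ``equivalence''.
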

\begin{proof} (a)$\Longleftrightarrow$(b):  This follows from
 Lemma \ref{extprop} and  Lemma \ref{usisom}.

 (c)$\Longrightarrow$(a):  Put $
 N=\F_p[[\mathbf F]]a/I_\mathbf Z^{\mathbf F}$, Then $$N=\F_p[[\mathbf F]]a/\F_p[[\mathbf F]](z-1)=\F_p[[\mathbf F]]a/\F_p[[\mathbf F]]ba.$$  Since $b$ is not invertible in $\F_p[[\mathbf F]]$,   
 $N$ is a non-trivial submodule of $I_\mathbf H^{\mathbf F}/I_\mathbf Z^{\mathbf F}$ and since  $\F_p[[\mathbf F]]$ does not have non-trivial zero-divisors $$\F_p[[\mathbf F]]a/\F_p[[\mathbf F]]ba \cong \F_p[[\mathbf F]]/\F_p[[\mathbf F]]b.$$ Clearly
 $ \D_{\F_p[[\mathbf F]]}\otimes_{\F_p[[\mathbf F]]} N=0$, and so (a) holds.
 
 (a)$\Longrightarrow$(c):  We put $M=I_\mathbf H^{\mathbf F}/I^{\mathbf F}_\mathbf Z$ and let  $\phi: M\to \D_{\F_p[[\mathbf F]]}\otimes_{\F_p[[\mathbf F]]} M$ be the canonical map.
  Let $\overline L=L/I_\mathbf Z^{\mathbf F}$ be the kernel of $\phi$ and $\overline M$ the image of $\phi$. Hence  we have  the exact sequence
  $$0\to L\to  I_\mathbf H^{\mathbf F}\to \overline M\to 0.$$
  After applying  $\D_{\F_p[[\mathbf F]]}\otimes_{\F_p[[\mathbf F]]}$ we obtain the exact sequence 
 \begin{multline}\label{long}
 0\to   \Tor_1^{\F_p[[\mathbf F]]}(\D_{\F_p[[\mathbf F]]}, \overline M) 
\to \D_{\F_p[[\mathbf F]]} \otimes_{\F_p[[\mathbf F]]} L\to \\
    (\D_{\F_p[[\mathbf F]]})^{d(\mathbf H)}\to \D_{\F_p[[\mathbf F]]}\otimes_{\F_p[[\mathbf F]]}  \overline M\to 0.\end{multline}

  By Corrollary \ref{freemod}, the $\F_p[[\mathbf F]]$-module $I_\mathbf H^{\mathbf F}$ is free of rank $d(\mathbf H)$ and the $\F_p[[\mathbf F]]$-module $I_\mathbf Z^{\mathbf F}$ is cyclic. Thus,  by Lemma \ref{-1},
  $$ \dim_{\D_{\F_p[[\mathbf F]]}}( \D_{\F_p[[\mathbf F]]}\otimes_{\F_p[[\mathbf F]]} M)=d(\mathbf H)-1.$$
  Therefore, $$\dim_{\D_{\F_p[[\mathbf F]]}} (\D_{\F_p[[\mathbf F]]}\otimes_{\F_p[[\mathbf F]]} \overline M)=\dim_{\D_{\F_p[[\mathbf F]]}}(\D_{\F_p[[\mathbf F]]}\otimes_{\F_p[[\mathbf F]]} M)=d(\mathbf H)-1.$$
  Observe also that by Proposition \ref{vanTor}, $\Tor_1^{\F_p[[\mathbf F]]}(\D_{\F_p[[\mathbf F]]}, \overline M) =0$. Therefore,  (\ref{long}) implies that 
  $$\dim_{\D_{\F_p[[\mathbf F]]}}(\D_{\F_p[[\mathbf F]]} \otimes_{\F_p[[\mathbf F]]} L)=1.$$
By \cite[Lemma 3.1]{Ja17}  a profinite submodule of a free profinite $\F_p[[\mathbf F]]$-module is again free.   Hence $L$ is a free profinite $\F_p[[\mathbf F]]$-module, and so $L$ should be a cyclic     
  $\F_p[[\mathbf F]]$-module.  We write $L=\F_p[[\mathbf F]]a$ for some  $a\in  I_\mathbf H^{\mathbf F}$. Then there exists  $b\in \F_p[[\mathbf F]]$ such that $ba=z-1$. By our assumption $L\ne I_\mathbf Z^{\mathbf F}$. Thus, $b$ is not invertible, and so  $b\in I_{\mathbf F}$.
 
  (c)$\Longrightarrow$(d): The map $g\mapsto g^{-1}$ on $\mathbf F$ can be extended to a continuous anti-isomorphism  $\alpha: \F_p[[\mathbf F]]\to \F_p[[\mathbf F]]$. If $z-1=ba$, then $z^{-1}-1=\alpha(z-1)=\alpha(a)\alpha(b)$ and so $z-1=(-z\alpha(a))\alpha(b)$. Now note that $-z\alpha(a)\in I_\mathbf  H\F_p[[\mathbf F]]$ and $\alpha(b)\in I_{\mathbf F}$. Since $\mathbf H$ is normal in $\mathbf F$, $ I_\mathbf  H\F_p[[\mathbf F]]=I_{\mathbf H}^{\mathbf F}$, and we obtain (d).
 
(d)$\Longrightarrow$(c): It is proved in the same way as (c)$\Longrightarrow$(d).\end{proof}

Now we are ready to prove Proposition \ref{dffree}.
\begin{proof}[Proof of Proposition \ref{dffree}]  If $\mathbf F$ is cyclic, then $I_{\mathbf F}=I_{\mathbf Z}^F$ and so $I_{\mathbf F}/I_\mathbf Z^{\mathbf F}$ is $\D_{\F_p[[\mathbf F]]}$-torsion-free.

Now we assume that $\mathbf F$ is not cyclic.
There exists a normal open subgroup $\mathbf N$ of $\mathbf F$ such that $z\mathbf N$ is not a $p$-power in $\mathbf F/\mathbf N$. We will prove the proposition by induction on $|\mathbf F/\mathbf N|$.

If $\mathbf F/\mathbf N$ is cyclic, then $z\not \in \Phi(\mathbf F)$ and so $z$ is a member of a free generating system of $\mathbf F$. If $\{z,x_1,\ldots. x_k\}$ is a free generating set of $F$, then  
$$I_{\mathbf F}/I_\mathbf Z^{\mathbf F}=(\F_p[[\mathbf F]](z-1)\oplus(\oplus_{i=1}^k \F_p[[\mathbf F]](x_i-1)))/\F_p[[\mathbf F]](z-1)\cong \F_p[[\mathbf F]]^k$$ is a free $\F_p[[\mathbf F]]$-module and we are done.

Assume now that $\mathbf F/\mathbf N$ is  not cyclic. Since $\Phi(\mathbf F/\mathbf N)= \mathbf N\Phi(\mathbf F)/\mathbf N$,   the pro-$p$ group $\mathbf F/\mathbf N\Phi(\mathbf F)$ is not cyclic as well. 

Let $  \mathbf M$ be the closed  subgroup of $\mathbf F$ containing the commutator subgroup  $[\mathbf F,\mathbf F]$ and the element $z$ and such that $\mathbf M/([\mathbf F,\mathbf F]\mathbf Z)$ is the torsion part of $\mathbf F/([\mathbf F,\mathbf F]\mathbf Z)$. Since $\mathbf Z$ is cyclic and   $\mathbf F/([\mathbf F,\mathbf F])$ is torsion-free and  abelian,  $\mathbf M\Phi(\mathbf F)/\Phi(\mathbf F)$ is non-trivial cyclic  (if $z\not \in \Phi(\mathbf F)$) or trivial (if $z\in \Phi( \mathbf F)$).   Therefore, since $\mathbf F/\mathbf N\Phi(\mathbf F)$ is not cyclic, $\mathbf M\mathbf N$ 
is a proper subgroup of $\mathbf F$.

By the construction of $ \mathbf  M$, $\mathbf F/\mathbf M\cong \Z_p^k$ for some $k\ge 1$.  Since $\mathbf M\mathbf N$ 
is a proper subgroup of $\mathbf F$,
  $\mathbf M\mathbf N/\mathbf M$ is a proper subgroup of $\mathbf F/\mathbf M$. Therefore, there exists a surjective map
  $\sigma:\mathbf F\to \Z_p$   such that $\mathbf M\le \ker \sigma$ and $\mathbf N\ker \sigma\ne \mathbf F$. We put $\mathbf H=\mathbf N\ker \sigma$ and 
 extend $\sigma$  to the map $\widetilde \sigma:\F_p[[\mathbf F]]\to \F_p[[\Z_p]]$. Observe that $\ker \widetilde \sigma=I_{\ker \sigma}^\mathbf F$.
 
By way of contradiction, assume that $I_{\mathbf F}/I_\mathbf Z^{\mathbf F}$ is not $\D_{\F_p[[\mathbf F]]}$-torsion-free. Then by Lemma \ref{critdf}, there are $a,b\in I_{\mathbf F}$ such that $ab=z-1$. Applying $\widetilde \sigma$ we obtain that $\widetilde \sigma (a)\widetilde \sigma(b)=0$. Since $ \F_p[[\Z_p]]$ is a domain, either $a$ or $b$ lie in $\ker \widetilde \sigma=I_{\ker \sigma}^\mathbf F\subset I_\mathbf H^{\mathbf F}$. Applying again Lemma \ref{critdf}, we conclude that $I_\mathbf H/I_\mathbf Z^\mathbf H$ is not $\D_{\F_p[[\mathbf H]]}$-torsion-free.  

However, observe that $\mathbf N$ is also a normal subgroup of $\mathbf H$, $z\mathbf N$ is not a $p$-power in $\mathbf H/\mathbf N$ 
and $|\mathbf H/\mathbf N|< |\mathbf F/\mathbf N|$. Thus, we can apply the inductive assumption and conclude that $I_\mathbf H/I_\mathbf Z^\mathbf H$ is
  $\D_{\F_p[[\mathbf H]]}$-torsion-free.  We have arrived to a contradiction. Thus,  $I_{\mathbf F}/I_\mathbf Z^{\mathbf F}$ is  $\D_{\F_p[[\mathbf F]]}$-torsion-free. \end{proof}
\subsection{$\D_{\F_p[G]}$-torsion-free modules} 
In this subsection we assume that $\mathbf F$ is a finitely generated free pro-$p$ group and  $G$ is an (abstract) dense finitely generated  subgroup of $\mathbf F$. First we  prove the following analogue of Lemma \ref{extprop}.
\begin{lem}
 \label{extabs}  Let $H$ be a subgroup of $G$ and
let $M$ be a $\D_{\F_p[H]}$-torsion-free left $\F_p[H]$-module. Then $\F_p[G]\otimes_{\F_p[H]} M$ is $\D_{\F_p[G]}$-torsion-free.
\end{lem}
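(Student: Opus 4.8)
The plan is to mimic the proof of Lemma~\ref{extprop}, but now the coefficient ring on the bigger side, $\F_p[[\mathbf F]]$, must be replaced by $\D_{\F_p[[\mathbf F]]}$, and we must work with $G$ and $H$ rather than with pro-$p$ groups. First I would use the characterization of $\D$-torsion-freeness from Lemma~\ref{dtorsionfree}(b): it suffices to exhibit an injective $\F_p[G]$-module homomorphism from $\F_p[G]\otimes_{\F_p[H]}M$ into some $\D_{\F_p[G]}$-module. Since $M$ is $\D_{\F_p[H]}$-torsion-free, the canonical map $M\to \D_{\F_p[H]}\otimes_{\F_p[H]}M$ is injective. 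The key point is that $\F_p[G]$ is a free right $\F_p[H]$-module (it is free on a transversal of $H$ in $G$), hence flat, so applying $\F_p[G]\otimes_{\F_p[H]}-$ preserves this injection, giving an injective $\F_p[G]$-map
$$\F_p[G]\otimes_{\F_p[H]}M\hookrightarrow \F_p[G]\otimes_{\F_p[H]}\bigl(\D_{\F_p[H]}\otimes_{\F_p[H]}M\bigr)\cong \bigl(\F_p[G]\otimes_{\F_p[H]}\D_{\F_p[H]}\bigr)\otimes_{\F_p[H]}M.$$

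Next I would need to push the target into a genuine $\D_{\F_p[G]}$-module. The natural candidate is to identify $\D_{\F_p[H]}$ with a subring of $\D_{\F_p[G]}$: by Proposition~\ref{divclousure}, $\D_{\F_p[H]}$ is isomorphic as an $\F_p[H]$-ring to the division closure $\D_H=\D(\F_p[H],\D_{\F_p[[\mathbf F]]})$ of $\F_p[H]$ inside $\D_{\F_p[[\mathbf F]]}$, and likewise $\D_{\F_p[G]}\cong\D_G$; since $\F_p[H]\subseteq\F_p[G]$, the division closure $\D_H$ sits inside $\D_G$. With this identification there is an obvious $\F_p[G]$-module map $\F_p[G]\otimes_{\F_p[H]}\D_{\F_p[H]}\to \D_{\F_p[G]}$, multiplication $a\otimes d\mapsto ad$. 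Composing, we land $\F_p[G]\otimes_{\F_p[H]}M$ inside the $\D_{\F_p[G]}$-module $\D_{\F_p[G]}\otimes_{\F_p[H]}M$, provided the composite is still injective.

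The main obstacle — exactly as in the profinite case of Lemma~\ref{extprop}, where Proposition~\ref{passH}(c) was invoked — is to show the composite map remains injective, i.e.\ that no information is lost when passing from $\F_p[G]\otimes_{\F_p[H]}\D_{\F_p[H]}$ to $\D_{\F_p[G]}$. Here I expect one cannot hope for an isomorphism $\D_{\F_p[G]}\cong\F_p[G]\otimes_{\F_p[H]}\D_{\F_p[H]}$ as in the open-subgroup case, since $[G:H]$ need not be finite; instead the right statement is that $\F_p[G]\otimes_{\F_p[H]}\D_{\F_p[H]}\to\D_{\F_p[G]}$ is injective, which is exactly flatness of $\D_{\F_p[G]}$ (or of its subring $\D_H$) over $\F_p[H]$ applied to the inclusion $\D_{\F_p[H]}\hookrightarrow$ (bigger thing), or more directly: $\F_p[G]$ is free over $\F_p[H]$ on a transversal, and $\D_{\F_p[G]}$ contains $\F_p[G]\cdot\D_{\F_p[H]}$ as an $\F_p[H]$-submodule which is free on the same transversal because $\D_{\F_p[G]}$ is a domain and the transversal elements are units, so they are left $\D_{\F_p[H]}$-linearly independent. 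Granting this, $\F_p[G]\otimes_{\F_p[H]}\D_{\F_p[H]}\to\D_{\F_p[G]}$ is injective, hence so is the whole composite $\varphi$, and Lemma~\ref{dtorsionfree}(b) yields that $\F_p[G]\otimes_{\F_p[H]}M$ is $\D_{\F_p[G]}$-torsion-free.

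I would then remark that the converse implication (that $M$ is $\D_{\F_p[H]}$-torsion-free if $\F_p[G]\otimes_{\F_p[H]}M$ is) is immediate since $M$ is an $\F_p[H]$-submodule of $\F_p[G]\otimes_{\F_p[H]}M$, although only the stated direction is needed.
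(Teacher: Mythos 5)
Your overall architecture matches the paper's: inject $M$ into $\D_{\F_p[H]}\otimes_{\F_p[H]}M$, tensor with the free right $\F_p[H]$-module $\F_p[G]$, and then push into a $\D_{\F_p[G]}$-module via the multiplication map $\F_p[G]\otimes_{\F_p[H]}\D_H\to \D_{\F_p[G]}$. You have also correctly isolated the crux: the injectivity of that multiplication map, i.e.\ the left $\D_H$-linear independence in $\D_{\F_p[G]}$ of a transversal of $H$ in $G$. But your justification for this crux is not valid. The fact that $\D_{\F_p[G]}$ is a domain and that the transversal elements are units says nothing about their linear independence over the division subring $\D_H$: in any division ring every nonzero element is a unit, yet a family of units can perfectly well be $\D_H$-linearly dependent (this is exactly what goes wrong for a non-universal, non-Hughes-free choice of division ring of fractions, where some $t\notin H$ can satisfy a nontrivial relation over $\D_H$). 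The independence of the cosets $t\D_H$ is precisely the statement that $\D_{\F_p[G]}$ is \emph{strongly Hughes-free}; this is a genuine theorem, for which the paper invokes Gr\"ater \cite{Gra19}, not a formal consequence of $\D_{\F_p[G]}$ being a domain. Your alternative suggestion via ``flatness of $\D_{\F_p[G]}$ over $\F_p[H]$'' does not produce the required injectivity either, since the multiplication map is not obtained by tensoring an injection with a flat module.

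A second, smaller omission: even once $\F_p[G]\otimes_{\F_p[H]}\D_H\hookrightarrow\D_{\F_p[G]}$ is known, the map you actually need to be injective is this inclusion tensored on the right with $M$ over $\F_p[H]$, and tensoring does not preserve injectivity in general. The paper handles this by observing that the image of $\F_p[G]\otimes_{\F_p[H]}\D_H$ is a direct summand of $\D_{\F_p[G]}$ as a right $\D_H$-module, hence as a right $\F_p[H]$-module; this does follow from the linear independence (a $\D_H$-subspace of a right $\D_H$-vector space is always a direct summand), but it needs to be said rather than subsumed into ``hence so is the whole composite.''
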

\begin{proof}   Let $\D_H$ be the division closure of $\F_p[H]$ in $\D_{\F_p[G]}$. Observe that $\D_H$ and $\D_{\F_p[H]}$ are isomorphic as  $\F_p[H]$-rings (it follows, for example, from Proposition \ref{divclousure}).

We have  that the map $ M\to \D_H\otimes_{\F_p[H]}M$ is injective. Then, since $\F_p[G]$ is a free right $\F_p[H]$-module, the map
$$\F_p[G]\otimes_{\F_p[H]}M\xrightarrow {\alpha}\F_p[G]\otimes_{\F_p[H]}( \D_H\otimes_{\F_p[H]} M)$$ is also injective. 

Consider  the canonical isomorphism between tensor products 
$$\F_p[G]\otimes_{\F_p[H]}( \D_H\otimes_{\F_p[H]} M)\xrightarrow {\beta} (\F_p[G]\otimes_{\F_p[H]} \D_H)\otimes_{\F_p[H]} M.$$ 

By \cite{Gra19}, $\D_{\F_p[G]}$ is strongly Hughes-free. This means that
 the canonical map  of $(\F_p[G],\F_p[H])$-bimodules $$\F_p[G]\otimes_{\F_p[H]} \D_H\to \D_{\F_p[G]}$$ is injective. Moreover, the image of $\F_p[G]\otimes_{\F_p[H]} \D_H$  is a direct    summand of $\D_{\F_p[G]}$ as a right $\D_H$-submodule (and so, it is also a direct  summand as a right $\F_p[H]$-submodule). Thus,  the following canonical map  of $\F_p[G]$-modules
 $$ (\F_p[G]\otimes_{\F_p[H]} \D_H)\otimes_{\F_p[H]}M \xhookrightarrow {\gamma}  \D_{\F_p[G]}\otimes_{\F_p[H]} M$$ 
 is injective. 
We put $\varphi=\gamma\circ \beta\circ \alpha$ and apply Lemma \ref{dtorsionfree}. Since $  \D_{\F_p[G]}\otimes_{\F_p[H]} M$ is a $\D_{\F_p[G]}$-module and $\varphi$ is an injective $\F_p[G]$-homomorphism, $\F_p[G]\otimes_{\F_p[H]} M$ is $\D_{\F_p[G]}$-torsion-free.

\end{proof}
   Now we can present our first example of a $\D_{\F_p[G]}$-torsion-free $\F_p[G]$-module.
\begin{pro} \label{quot} Let $H$ be a non-trivial subgroup of $G$ and $A$ a maximal abelian subgroup of $H$. Then the $\F_p[G]$-module $I^G_H/I^G_A$ is $\D_{\F_p[G]}$-torsion-free.
\end{pro}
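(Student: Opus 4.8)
The plan is to first reduce to the case where the subgroup $H$ coincides with $G$, and then to embed $I_G/I_A^G$ into a module over the completed group algebra $\F_p[[\mathbf F]]$ to which Proposition \ref{dffree} can be applied.

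\textbf{Reduction.} By Lemma \ref{isomdiscr}(b) applied to $A\le H\le G$ there is an isomorphism of $\F_p[G]$-modules $I_H^G/I_A^G\cong\F_p[G]\otimes_{\F_p[H]}(I_H/I_A^H)$, so by Lemma \ref{extabs} it suffices to show that $I_H/I_A^H$ is $\D_{\F_p[H]}$-torsion-free. Replacing $\mathbf F$ by the closure of $H$ (again a finitely generated free pro-$p$ group, in which $H$ is dense), this is exactly the assertion of the proposition in the case where the subgroup is the whole group. Note that $A\neq 1$ is automatic, since a trivial subgroup of a non-trivial group is never maximal abelian. So from now on assume $G$ is dense in $\mathbf F$ and $A\neq 1$ is a maximal abelian subgroup of $G$.

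\textbf{Main construction.} Let $\mathbf C$ be the maximal abelian subgroup of $\mathbf F$ containing $A$, i.e.\ $\mathbf C=C_{\mathbf F}(A)$. By Lemma \ref{naxab}, $\mathbf C\cong(\Z_p,+)$; pick a topological generator $w$, which is not a proper $p$-power in $\mathbf F$ (a $p$-th root of $w$ would centralize $\mathbf C$, hence lie in the self-centralizing subgroup $\mathbf C$, impossible in $\Z_p$). Since $G\cap\mathbf C$ is abelian and contains $A$, maximality gives $A=G\cap\mathbf C$. I claim that $\F_p[G]\cap I_{\mathbf C}^{\mathbf F}=I_A^G$ inside $\F_p[[\mathbf F]]$. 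Indeed, by Lemma \ref{freemodule} and (the argument proving) Lemma \ref{usisom}(a), the module $M:=\F_p[[\mathbf F]]/I_{\mathbf C}^{\mathbf F}\cong\F_p[[\mathbf F]]\widehat\otimes_{\F_p[[\mathbf C]]}\F_p$ is a free profinite $\F_p$-module with a topological basis indexed by the cosets in $\mathbf F/\mathbf C$, the quotient map sending $f\in\mathbf F$ to the basis element $f\mathbf C$; in particular finitely many distinct cosets are $\F_p$-linearly independent in $M$. The composite $\F_p[G]\hookrightarrow\F_p[[\mathbf F]]\twoheadrightarrow M$ sends $g\in G$ to $g\mathbf C$, and $g\mathbf C=g'\mathbf C$ for $g,g'\in G$ precisely when $g^{-1}g'\in G\cap\mathbf C=A$; hence this composite factors as $\F_p[G]\twoheadrightarrow\F_p[G]/I_A^G\hookrightarrow M$, so its kernel is exactly $I_A^G$, proving the claim. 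Since $I_A^G\subseteq I_G\subseteq\F_p[G]$ and $I_A^G\subseteq I_{\mathbf C}^{\mathbf F}$, it follows that $I_G\cap I_{\mathbf C}^{\mathbf F}=I_A^G$, so the natural $\F_p[G]$-homomorphism $I_G/I_A^G\to I_{\mathbf F}/I_{\mathbf C}^{\mathbf F}$ is injective.

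\textbf{Conclusion.} Applying Proposition \ref{dffree} with $z=w$ and $\mathbf Z=\mathbf C$ shows that $I_{\mathbf F}/I_{\mathbf C}^{\mathbf F}$ is $\D_{\F_p[[\mathbf F]]}$-torsion-free, hence embeds as an $\F_p[[\mathbf F]]$-module into the $\D_{\F_p[[\mathbf F]]}$-module $\D_{\F_p[[\mathbf F]]}\otimes_{\F_p[[\mathbf F]]}(I_{\mathbf F}/I_{\mathbf C}^{\mathbf F})$. Viewing $\D_{\F_p[G]}$ as a subring of $\D_{\F_p[[\mathbf F]]}$ via Proposition \ref{divclousure}, this module is a $\D_{\F_p[G]}$-module, so composing with the injection from the previous step gives an injective $\F_p[G]$-homomorphism from $I_G/I_A^G$ into a $\D_{\F_p[G]}$-module; by Lemma \ref{dtorsionfree}(b), $I_G/I_A^G$ is $\D_{\F_p[G]}$-torsion-free. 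The step I expect to require the most care is the choice of $\mathbf C$: one must use the \emph{full} maximal procyclic subgroup of $\mathbf F$ containing $A$ rather than the closure $\overline A$ of $A$, since the topological generator of $\overline A$ may itself be a proper $p$-power in $\mathbf F$, in which case $I_{\mathbf F}/I_{\overline A}^{\mathbf F}$ is a non-zero torsion $\F_p[[\mathbf F]]$-module and the comparison collapses. This, together with the coset-module identification $\F_p[G]\cap I_{\mathbf C}^{\mathbf F}=I_A^G$ and the deep input of Proposition \ref{dffree}, is the heart of the argument.
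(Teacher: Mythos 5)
Your argument is correct and follows essentially the same route as the paper: reduce via Lemma \ref{isomdiscr}(b) and Lemma \ref{extabs} to showing $I_H/I_A^H$ is $\D_{\F_p[H]}$-torsion-free, embed $\F_p[H/A]$ into $\F_p[[\mathbf F/\mathbf Z]]$ with $\mathbf Z=C_{\mathbf F}(A)$ a maximal procyclic subgroup (this is the paper's Claim \ref{emb}), and conclude from Proposition \ref{dffree} together with Proposition \ref{divclousure} and Lemma \ref{dtorsionfree}. The one caveat is that you should not replace $\mathbf F$ by the closure of $H$: since $H$ need not be finitely generated, that closure need not be topologically finitely generated and Proposition \ref{dffree} would not apply to it, but this step is superfluous — your construction works verbatim inside the original finitely generated $\mathbf F$, exactly as in the paper.
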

\begin{proof}
From Lemma \ref{isomdiscr} we know that
 $$I^G_H/I^G_A\cong \F_p[G]\otimes_{\F_p[H]} (I_H/I^H_A).$$
Thus, in view of Lemma \ref{extabs}, it is enough  to show that $I_H/I^H_A$ is $\D_{\F_p[H]}$-torsion-free. 

Let $\mathbf Z=C_{\mathbf F} (A)$. Since $\mathbf F$ is a free pro-$p$ group, a centralizator of a non-trivial element is  maximal cyclic pro-$p$ subgroup. Therefore, since $A$ is abelian and non-trivial, $\mathbf Z$ is a maximal cyclic pro-$p$ subgroup of $\mathbf F$.
\begin{claim}\label{emb}
The canonical map $\F_p[H/A]\to \F_p[[\mathbf F/ \mathbf Z]]$ is injective. 
\end{claim}
\begin{proof}
 
Since $A$ is maximal abelian in $H$, we have that $A=\mathbf Z\cap H$. Hence the obvious map $\F_p[H/A]\to \F_p[\mathbf F/ \mathbf Z]$ in injective. The map  $\F_p[\mathbf F/ \mathbf Z]\to \F_p[[\mathbf F/ \mathbf Z]]$ is also injective, because $\mathbf Z$ is closed in $\mathbf F$. This finishes the proof of the claim.\end{proof}
Observe that  $\F_p[H/A]\cong \F_p[H]/I_A^H$ and $\F_p[[\mathbf F]]/I_\mathbf Z^{\mathbf F}\cong \F_p[[\mathbf F/ \mathbf Z]]$. 
Therefore,  by Claim \ref{emb}, $I_H/I^H_A$ is a $\F_p[H]$-submodule of $I_{\mathbf F}/I_\mathbf Z^{\mathbf F}$. By Proposition \ref{dffree}, we can embed $I_{\mathbf F}/I_\mathbf Z^{\mathbf F}$ in a $\D_{\F_p[[\mathbf F]]}$-module. By Proposition \ref{divclousure}, every $\D_{\F_p[[\mathbf F]]}$-module is also a $\D_{\F_p[H]}$-module. Therefore, by Lemma \ref{dtorsionfree},   $I_H/I^H_A$ is $\D_{\F_p[H]}$-torsion-free.
\end{proof}
  The following proposition shows   another  example of a $\D_{\F_p[G]}$-torsion-free $\F_p[G]$-module. This is the main result of this section.
\begin{pro}  \label{dfpfree} Let $\mathbf F$ be a finitely generated free pro-$p$ group and let $G$ be an (abstract) dense finitely generated  subgroup of $\mathbf F$.  Let $H$ be a non-trivial subgroup of $G$ and $A$ a maximal abelian subgroup of $H$. Let $B$ be an abelian subgroup of $G$ containing $A$. We put  $$J=\{(x,-x)\in I^G_H\oplus I^G_B:\ x\in I^G_A\}.$$ Then $M=(I^G_H\oplus I^G_B)/J$ is $\D_{\F_p[G]}$-torsion-free and $\dim_{\F_p[G]}M=\dim_{\F_p[G]}I^G_H$.
\end{pro}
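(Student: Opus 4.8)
The plan is to realise $M$ as the pushout $I^G_H\oplus_{I^G_A}I^G_B$ and to extract everything from the two short exact sequences it sits in. Write $\D=\D_{\F_p[G]}$, so that $\dim_{\F_p[G]}=\dim_{\D}$. Using the inclusions of left ideals $I^G_A\subseteq I^G_H$ and $I^G_A\subseteq I^G_B$, the map $x\mapsto(x,-x)$ is injective and has cokernel $M$, giving a Mayer--Vietoris sequence
\[0\to I^G_A\to I^G_H\oplus I^G_B\to M\to 0 ,\]
while $b\mapsto(0,b)+J$ is injective and, since $J+(0\oplus I^G_B)=I^G_A\oplus I^G_B$, has cokernel $I^G_H/I^G_A$, giving
\[0\to I^G_B\to M\to I^G_H/I^G_A\to 0 .\]

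First I would record the relevant $\D$-dimensions. Since $A$ and $B$ are non-trivial torsion-free abelian, $(\ref{abelian})$ gives $\beta^{\operatorname{mod}p}_1(A)=\beta^{\operatorname{mod}p}_1(B)=0$; combining $I^G_A\cong\F_p[G]\otimes_{\F_p[A]}I_A$ (Lemma \ref{isomdiscr}) with Proposition \ref{divclousure} then yields $\dim_{\D}I^G_A=\dim_{\F_p[A]}I_A=1$, and similarly $\dim_{\D}I^G_B=1$, while $\dim_{\D}I^G_H=\beta^{\operatorname{mod}p}_1(H)+1$ is finite (here $H$ is finitely generated). The crucial local observation is that the canonical map $\D\otimes_{\F_p[G]}I^G_A\to\D$ coming from $I^G_A\subseteq\F_p[G]$ is surjective: its image is the left ideal $\D\cdot I^G_A$, which is all of $\D$ because $I^G_A$ contains $a-1$ for some $1\neq a\in A$ and $a-1$ is a unit of the division ring $\D$. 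Since source and target both have $\D$-dimension $1$, this surjection is an isomorphism; in particular the map $\D\otimes_{\F_p[G]}I^G_A\to\D\otimes_{\F_p[G]}I^G_B$ induced by $I^G_A\subseteq I^G_B$ is injective, and hence so is $x\mapsto(x,-x)$ from $\D\otimes I^G_A$ into $\D\otimes I^G_H\oplus\D\otimes I^G_B$. Tensoring the Mayer--Vietoris sequence with $\D$ and using right exactness, the image of $\D\otimes I^G_A$ is one-dimensional, so
\[\dim_{\D}M=\bigl(\dim_{\D}I^G_H+\dim_{\D}I^G_B\bigr)-1=\dim_{\D}I^G_H ,\]
which is the dimension assertion.

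For $\D_{\F_p[G]}$-torsion-freeness I would feed the second sequence $0\to I^G_B\to M\to I^G_H/I^G_A\to 0$ into Lemma \ref{dtf}. Its outer terms are $\D$-torsion-free of finite $\D$-dimension: $I^G_B$ is a submodule of $\F_p[G]\subseteq\D$ (Lemma \ref{dtorsionfree}), with $\dim_{\D}I^G_B=1$; and $I^G_H/I^G_A$ is $\D$-torsion-free by Proposition \ref{quot}, with $\dim_{\D}(I^G_H/I^G_A)=\dim_{\D}I^G_H-1$ by Lemma \ref{-1} applied to the non-trivial submodule $I^G_A$ of $\D$-dimension $1$ inside the $\D$-torsion-free, finite-dimensional module $I^G_H$. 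Since $\dim_{\D}M=\dim_{\D}I^G_H=1+(\dim_{\D}I^G_H-1)=\dim_{\D}I^G_B+\dim_{\D}(I^G_H/I^G_A)$, Lemma \ref{dtf} applies and $M$ is $\D_{\F_p[G]}$-torsion-free.

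The heart of the argument --- and the only delicate point --- is the equality $\dim_{\D}M=\dim_{\D}I^G_H$: the tempting short exact sequence $0\to I^G_B/I^G_A\to M\to I^G_H/I^G_A\to 0$ is useless because its left-hand term is a torsion module whenever $B\neq A$, so one must run the Mayer--Vietoris sequence instead, where the key input is that $I^G_A$ generates $\D$ as a left ideal, forcing $\D\otimes_{\F_p[G]}I^G_A\cong\D$ and injectivity of the connecting map. The remaining ingredients --- the identity $\dim_{\F_p[G]}(\F_p[G]\otimes_{\F_p[H']}-)=\dim_{\F_p[H']}(-)$ (Proposition \ref{divclousure}), Lemma \ref{isomdiscr}, and the exactness bookkeeping --- are routine.
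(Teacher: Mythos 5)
Your proposal is correct and follows essentially the same route as the paper: both exhibit $M$ as an extension $0\to I^G_B\to M\to I^G_H/I^G_A\to 0$, show $\dim_{\D}I^G_A=\dim_{\D}I^G_B=1$ so that the dimensions add up, and conclude torsion-freeness from Lemma \ref{dtf} together with Proposition \ref{quot}. The only cosmetic difference is that where the paper gets $\dim_{\D}M=\dim_{\D}I^G_H$ by applying Lemma \ref{-1} to the one-dimensional submodule $J$ of the torsion-free module $I^G_H\oplus I^G_B$, you re-derive that same instance by hand via the injectivity of the connecting map $\D\otimes I^G_A\to\D\otimes(I^G_H\oplus I^G_B)$.
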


\begin{proof} Let $L=(I^G_A\oplus I^G_B)/J\le M$. Then $L\cong I^G_B $ is a submodule of $\F_p[G]$, and so it is $\D_{\F_p[G]}$-torsion-free. The $\F_p[G]$-module $M/L$ is isomorphic to $I_H^G/I^G_A$, and so it is $\D_{\F_p[G]}$-torsion-free by Proposition \ref{quot}.

By  (\ref{abelian}), $\dim_{\F_p[A]} I_A=1$. Therefore, by Proposition \ref{passH}(b), $$\dim_{\F_p[G]}I^G_A=\dim_{\F_p[A]} I_A=1.$$ 
In the same way we obtain that $\dim_{\F_p[G]}I^G_B=1$. 

Since $\dim_{\F_p[G]}(I^G_H\oplus I^G_B)=\dim_{\F_p[G]}I ^G_H +1,$ by Lemma \ref{-1},
  \begin{multline*}
  \dim_{\F_p[G]}M=\dim_{\F_p[G]}I ^G_H +1-1=\dim_{\F_p[G]}I^G_H \textrm{\ and}\\
     \dim_{\F_p[G]} (I_H^G/I^G_A)=\dim_{\F_p[G]}I ^G_H -1= \dim_{\F_p[G]} M-1.\end{multline*}
Therefore, 
$$\dim_{\F_p[G]} (M/L)=  \dim_{\F_p[G]} (I_H^G/I^G_A) = \dim_{\F_p[G]} M-1.$$
Thus, we have obtained that $M/L$ and $L$ are $\D_{\F_p[G]}$-torsion-free and  $$\dim_{\F_p[G]}M=\dim_{\F_p[G]} (M/L)+\dim_{\F_p[G]} L.$$
Applying  Lemma \ref{dtf}, we conclude that $M$ is also $\D_{\F_p[G]}$-torsion-free.
\end{proof}

\section{Proof of main results}\label{proofs}

\subsection{The inductive step in the proof of Theorem \ref{iteratedextension}}
The following theorem is the main result of the paper.  Theorem \ref{iteratedextension} follows  from it directly.
\begin{teo} \label{main} Let $\mathbf F$  be a finitely generated free pro-$p$ group and  let $H\hookrightarrow \mathbf F$ be  a strong embedding of  a finitely generated group  $H$.  Let   $A$ be a maximal abelian subgroup of $H$  and let $B$  be  an abelian finitely generated  subgroup of $\mathbf F$ containing $A$. Put $G=\langle H, B\rangle$. Then the canonical homomorphism   $ H*_AB \to G$ is an isomorphism, and the embedding $G \hookrightarrow \mathbf F$ is strong. 
\end{teo}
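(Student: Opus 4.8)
The plan is to check the homological criterion for an amalgam (Proposition~\ref{critamal}) and to read off strongness of $G\hookrightarrow\mathbf F$ along the way. First I would record the easy structural facts: since $H$ is dense in $\mathbf F$ so is $G=\langle H,B\rangle$, and $G$ is finitely generated; moreover $A\subseteq H\cap B$, and since $H\cap B$ is an abelian subgroup of $H$ containing the maximal abelian subgroup $A$, we get $H\cap B=A$. If $H$ is trivial then $\mathbf F$ is trivial and there is nothing to do, so I assume $H\neq 1$; being a subgroup of a free pro-$p$ group, $H$ (and hence $G$) is then torsion-free and infinite. By Proposition~\ref{critamal} with $k=\F_p$, the theorem amounts to proving the equality $I_H^G\cap I_B^G=I_A^G$ in $\F_p[G]$.

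Next I would set up the module from Proposition~\ref{dfpfree}: let $M=(I^G_H\oplus I^G_B)/J$ with $J=\{(x,-x):x\in I^G_A\}$, and consider the addition map $f\colon M\to I_G$ sending $(a,b)+J$ to $a+b$. Using $G=\langle H,B\rangle$ together with the identity $s_1\cdots s_n-1=\sum_{i} s_1\cdots s_{i-1}(s_i-1)$ one sees $I_H^G+I_B^G=I_G$, so $f$ is surjective; and its kernel is manifestly $(I_H^G\cap I_B^G)/I_A^G$. Thus everything reduces to showing that $f$ is \emph{injective}.

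The core of the proof is a dimension count over $\D:=\D_{\F_p[G]}$. On one side, Proposition~\ref{dfpfree} gives $\dim_{\F_p[G]}M=\dim_{\F_p[G]}I_H^G$; by Lemma~\ref{isomdiscr}(a) we have $I_H^G\cong\F_p[G]\otimes_{\F_p[H]}I_H$, and by the transfer property of $\dim_{\F_p[G]}$ (the abstract analogue of Proposition~\ref{passH}(b), invoked exactly as in the proof of Proposition~\ref{dfpfree}) this equals $\dim_{\F_p[H]}I_H=\beta_1^{\operatorname{mod}p}(H)+1=d(\mathbf F)$, the last step because $H\hookrightarrow\mathbf F$ is strong. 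On the other side, $G$ is a finitely generated dense subgroup of $\mathbf F$, so \eqref{formbeta} yields $\dim_{\F_p[G]}I_G=\beta_1^{\operatorname{mod}p}(G)+1\geq d(\mathbf F)$, while surjectivity of $f$ forces $\dim_{\F_p[G]}M\geq\dim_{\F_p[G]}I_G$. Chaining these,
$$d(\mathbf F)=\dim_{\F_p[G]}M\geq\dim_{\F_p[G]}I_G=\beta_1^{\operatorname{mod}p}(G)+1\geq d(\mathbf F),$$
so all these quantities coincide. In particular $\beta_1^{\operatorname{mod}p}(G)+1=d(\mathbf F)$, which with $G$ dense in $\mathbf F$ is precisely the assertion that $G\hookrightarrow\mathbf F$ is strong, and also $\dim_{\F_p[G]}M=\dim_{\F_p[G]}I_G<\infty$.

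Finally I would apply $\D\otimes_{\F_p[G]}-$ to $f$: right exactness makes $\D\otimes f\colon\D\otimes_{\F_p[G]}M\to\D\otimes_{\F_p[G]}I_G$ surjective, and since both sides are $\D$-spaces of the same finite dimension it is an isomorphism. Now Proposition~\ref{dfpfree} also says $M$ is $\D$-torsion-free, i.e. the canonical map $M\to\D\otimes_{\F_p[G]}M$ is injective; chasing the commutative square formed by $f$, $\D\otimes f$ and the two canonical maps into the tensor products then forces $\ker f=0$. Hence $I_H^G\cap I_B^G=I_A^G$, and Proposition~\ref{critamal} gives that $H*_AB\to G$ is an isomorphism, completing the proof. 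The one place demanding real care — and the step I expect to be the main obstacle — is the transfer identity $\dim_{\F_p[G]}I_H^G=\dim_{\F_p[H]}I_H$, that is, the compatibility of the universal division ring with induction from $\F_p[H]$ to $\F_p[G]$ (strong Hughes-freeness, already used in Lemma~\ref{extabs}); everything else is bookkeeping with Sylvester dimensions and the basic properties of $\D$-torsion-free modules.
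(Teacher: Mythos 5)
Your proposal is correct and follows essentially the same route as the paper: reduce to $I_H^G\cap I_B^G=I_A^G$ via Proposition~\ref{critamal}, use Proposition~\ref{dfpfree} to compute $\dim_{\F_p[G]}M=d(\mathbf F)$ and to get $\D_{\F_p[G]}$-torsion-freeness of $M$, squeeze $\dim_{\F_p[G]}I_G$ between $d(\mathbf F)$ and $\dim_{\F_p[G]}M$ to obtain strongness, and then conclude injectivity of the addition map from torsion-freeness (the paper phrases this last step via Lemma~\ref{-1}, which is the same diagram chase you describe).
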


\begin{proof} 
In view of Proposition \ref{critamal} we have to show that $I^G_H\cap I_B^G=I_A^G$ in $\F_p[G]$. Let 
$$J=\{(x,-x)\in I^G_H\oplus I^G_B:\ x\in I^G_A\} \textrm{\ and\ } M=(I^G_H\oplus I^G_B)/J.$$ Then  by Proposition \ref{dfpfree}, $\dim_{\F_p[G]}M=\dim_{\F_p[G]}I^G_H$. Therefore, 
 $$\dim_{\F_p[G]}M=\dim_{\F_p[G]}I^G_H=\dim_{\F_p[H]} I_H=\beta_1^{\operatorname{mod}p} (H)+1=d(\mathbf F).$$
Since $I_G=I_H^G+I_B^G$,  we have that the natural map $\alpha: M\to I_G$ is surjective. In particular $$\beta^{\operatorname{mod}p}_1(G)=\dim_{\F_p[G]}I_G-1\le\dim_{\F_p[G]} M-1= d(\mathbf F)-1.$$
Thus, using (\ref{formbeta}) we obtain that $\beta^{\operatorname{mod}p}_1(G)= d(\mathbf F)-1$ and $\dim_{\F_p[G]} I_G=\dim_{\F_p[G]}   M$. This shows that the  embedding $G \hookrightarrow \mathbf F$ is strong. 

By Proposition \ref{dfpfree}, $M$ is $\D_{\F_p[G]}$-torsion-free. Therefore, by Lemma \ref{-1}, for any proper quotient $\overline M$ of $M$, $\dim_{\F_p[G]} \overline M< \dim_{\F_p[G]}   M$. This implies that $\alpha$ is an isomorphism, and so  $I^G_H\cap I_B^G=I_A^G$. Hence, Proposition \ref{critamal} implies that  $G\cong  H*_AB$.\end{proof}

Another direct consequence  of  Theorem \ref{main} is the following corollary.
\begin{cor}\label{amalg}  Let $\mathbf F$  be a finitely generated free pro-$p$ group and  let $H\hookrightarrow \mathbf F$ be  a strong embedding of finitely generated group  $H$. 
Let $A$ be a maximal abelian subgroup of $H$. Assume that $A$ is finitely generated. Let $B$ be a finitely generated torsion-free abelian group containing $A$  and such that   $B/A$ has no $p$-torsion. Then there exists an embedding of 
$H*_AB$ into $\mathbf F$ that extends  $H\hookrightarrow \mathbf F$. In particular, $ H*_AB$ is SE($p$) (see Definition \ref{defsep}).
\end{cor}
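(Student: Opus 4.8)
The plan is to deduce Corollary \ref{amalg} from Theorem \ref{main} by first constructing a suitable embedding of the abstract abelian group $B$ into $\mathbf F$ extending $A\hookrightarrow\mathbf F$, and then applying Theorem \ref{main} to obtain the amalgam structure. First I would recall from Lemma \ref{naxab} that the maximal abelian subgroup $\mathbf Z=C_{\mathbf F}(A)$ containing $A$ is isomorphic to $(\Z_p,+)$; since $A$ is a finitely generated subgroup of $\mathbf Z\cong\Z_p$, it is cyclic, say $A=\langle a\rangle$, and $a$ corresponds to some nonzero element of $\Z_p$. Writing $a=p^m u$ with $u\in\Z_p^*$ (so $m$ is the $p$-adic valuation), the point is that any finitely generated torsion-free abelian overgroup $B\ge A$ with $B/A$ having no $p$-torsion can be realized inside $\Z_p$: indeed $B$ is free abelian of some finite rank $n$, but since $B/A$ is finite (as $A$ has finite index... wait, $A$ need not have finite index) — more carefully, $B\otimes\Q$ is a $\Q$-vector space containing $A\otimes\Q=\Q$, and the condition that $B/A$ has no $p$-torsion forces $\operatorname{rk} B=1$, so $B$ is cyclic, $B=\langle b\rangle$ with $a=b^k$ for some $k\ge 1$ coprime to $p$ (the no-$p$-torsion condition). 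Since $k$ is a unit in $\Z_p$, the element $p^m u/k\in\Z_p$ is well-defined and generates a subgroup of $\Z_p$ isomorphic to $B$ in which $a$ sits correctly; this gives the desired extension $B\hookrightarrow\mathbf Z\le\mathbf F$ of $A\hookrightarrow\mathbf F$. This is essentially the remark already made in the introduction after Theorem \ref{iteratedextension}.

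Having fixed such an embedding, I would set $G=\langle H,B\rangle\le\mathbf F$. I must check the hypotheses of Theorem \ref{main}: $\mathbf F$ is a finitely generated free pro-$p$ group, $H\hookrightarrow\mathbf F$ is strong by assumption, $A$ is a maximal abelian subgroup of $H$, and $B$ is an abelian finitely generated subgroup of $\mathbf F$ containing $A$ — all satisfied. Theorem \ref{main} then directly gives that the canonical homomorphism $H*_AB\to G$ is an isomorphism and that the embedding $G\hookrightarrow\mathbf F$ is strong. The isomorphism $H*_AB\cong G\le\mathbf F$ is precisely the asserted embedding of $H*_AB$ into $\mathbf F$ extending $H\hookrightarrow\mathbf F$ (it extends it because $G$ is generated inside $\mathbf F$ by the original copy of $H$ together with $B$), and the strongness of $G\hookrightarrow\mathbf F$ is exactly the statement that $H*_AB$ is SE($p$) in the sense of Definition \ref{defsep}.

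The only real content beyond quoting Theorem \ref{main} is the first paragraph: producing the embedding $B\hookrightarrow\mathbf F$. The mild subtlety I expect to be the main obstacle is justifying that the hypotheses force $B$ to be cyclic over $A$ — i.e., that $\operatorname{rk}_{\Z}B=\operatorname{rk}_{\Z}A=1$ and $[B:A]$ is finite and prime to $p$ — and then checking that division by the unit $k$ inside $\Z_p$ does indeed yield an abstract subgroup of $\Z_p$ abstractly isomorphic to $B$ with $A$ mapped to $\langle a\rangle$ as prescribed. Once that is in place, everything else is a direct appeal to Theorem \ref{main}, with no further computation. (If one does not want to assume $B/A$ finite, one should instead argue that $B\otimes_{\Z}\Z_p$ embeds appropriately; but under the stated hypothesis "$B$ finitely generated torsion-free abelian containing $A$ with $B/A$ $p$-torsion-free" the rank-one conclusion is the clean route.)
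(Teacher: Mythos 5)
Your overall strategy coincides with the paper's: extend $A\hookrightarrow\mathbf F$ to an embedding $B\hookrightarrow\mathbf F$, set $G=\langle H,B\rangle$, and invoke Theorem \ref{main}; that second step is correct and complete as you state it. The gap is in the first step. Your claim that ``the condition that $B/A$ has no $p$-torsion forces $\operatorname{rk}B=1$'' is false: the hypothesis constrains only the torsion subgroup of $B/A$, and a torsion-free quotient such as $B/A\cong\Z^{k}$ has no $p$-torsion at all. So $B$ may perfectly well be $A\times\Z^{k}$ of rank $k+1$ --- and this is exactly the case required for the applications to extensions of centralizers, ICE groups and limit groups described in the introduction, so it cannot be excluded. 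Your argument (divide the generator of $A$ by the unit $k\in\Z_p^{*}$) therefore only treats the case $|B:A|<\infty$ and, as written, the proof fails for the case the corollary is chiefly designed for.

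The repair is short and is what the remark after Theorem \ref{iteratedextension} alludes to. Since $A$ is a nontrivial finitely generated subgroup of $\mathbf Z=C_{\mathbf F}(A)\cong(\Z_p,+)$, it is infinite cyclic, $A=\langle a\rangle$ with image $0\neq\alpha\in\Z_p$. By the elementary divisor theorem choose a basis $b_1,\dots,b_n$ of $B\cong\Z^{n}$ with $a=kb_1$ (written additively); then $B/A\cong(\Z/k\Z)\oplus\Z^{n-1}$, and the no-$p$-torsion hypothesis says precisely that $p\nmid k$, i.e.\ $k\in\Z_p^{*}$. Define $B\to\mathbf Z$ by $b_1\mapsto\alpha/k$ and $b_i\mapsto\beta_i$ for $i\ge2$, where $\alpha/k,\beta_2,\dots,\beta_n\in\Z_p$ are chosen linearly independent over $\Q$ (possible since $\Q_p$ has uncountable $\Q$-dimension). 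This homomorphism is injective, lands in the abelian group $\mathbf Z\le\mathbf F$, and sends $a$ to $\alpha$, so it extends $A\hookrightarrow\mathbf F$. With this in place your appeal to Theorem \ref{main} goes through verbatim and yields both the embedding of $H*_AB$ and the SE($p$) conclusion.
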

\begin{proof} Let $H\hookrightarrow \mathbf F$ be  a strong embedding.
 Since $B/A$ has no $p$-torsion and  $B$ is torsion-free abelian, the embedding of $A$ into $\mathbf F$ can be extended to an  embedding of $B$ into $\mathbf F$. Now, we can apply Theorem \ref{main}.\end{proof}

 \subsection{$A$-groups}
 Let $A$ be a commutative ring. An {\bf $A$-group} is a group $G$ along with a map $G\times A\to G$, called ``action",  satisfying the following axioms:
 \begin{enumerate}
\item [(a)] $g^{1_A}=g$, $g^{0_A}=1$, $1^\alpha=1$;
\item [(b)] $g^{\alpha}g^{\beta}=g^{\alpha+\beta}$, $(g^\alpha)^\beta=g^{\alpha\beta}$;
\item [(c)] $(h^{-1}gh)^{\alpha}=h^{-1}g^{\alpha} h$;
\item [(d)] If $gh=hg$, $(gh)^{\alpha}=g^{\alpha}h^{\alpha}$;
  \end{enumerate}
for all $g,h\in G$ and $\alpha, \beta\in A$.  
  This definition generalizes the definition of $\Q$-group which appears in the introduction.
  
  Given a group $G$ and a commutative ring $A$, an {\bf $A$-completion} of $G$ is an $A$-group $G^A$ with a group homomorphism $\lambda: G\to G^A$ such that $G^A$ is generated by $\lambda(G)$ as an $A$-group and for any $A$-group $H$ and any homomorphism $\phi:G\to H$ there exists a unique $A$-homomorphism $\psi:G^A\to H$ such that $\phi=\psi\circ \lambda$. It is shown in  \cite[Theorems 1 and 2]{MR94} that an $A$-completion of $G$ exists and it is unique up to an $A$-isomorphism.
    
  An $A$-group $F^A(X)$ with the  set of $A$-generators $X$  is said to be a free $A$-group with base $X$, 
  if for every $A$-group $G$ an arbitrary mapping $\phi_0: X\to G$ can be extended to an $A$-homomorphism $\phi: F^A(X)\to G$.    Thus, $F^A(X)$  is the
$A$-completion of the ordinary free group $F(X)$ with free generating set $X$.  

A {\bf CSA-group} is a group in which the centralizer of every nontrivial element is abelian and malnormal. In \cite[Theorem 5]{MR96} it is shown that every torsion-free extension of centralizer of a torsion-free CSA group is again CSA. This is used  in \cite[Theorem 8]{MR96} to describe the group $F^A(X)$. In the following proposition we extract the information that we will need later.  
  \begin{pro}\label{MR}   Let $X$ be a finite set and $A$ a commutative ring  with a torsion-free additive group.  Then there are subgroups $\{W_n\}_{n\ge 0}$ of  $F^A(X)$  such that 
  \begin{enumerate}
 \item[(a)] $W_0=F(X)$;
  \item[(b)]   $W_{n+1}=\langle W_n, z_n^A\rangle$, where   $z_n\in W_n$ generates a maximal abelian subgroup in $W_n$
 and the canonical map $\displaystyle W_n*_{z_n=1_A}  A\to W_{n+1}$ is an isomorphism;
 \item[(c)]  $F(X)^A=\bigcup_{i=0}^\infty W_n$.
 \end{enumerate}
\end{pro}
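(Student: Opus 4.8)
The plan is to reduce the statement to the structure theory of free $A$-groups of Myasnikov and Remeslennikov, which is exactly what the cited results \cite[Theorems 1 and 2]{MR94} and \cite[Theorems 5 and 8]{MR96} supply. First I would recall the two facts that make everything go. Since the additive group of $A$ is torsion-free, the cyclic subgroup $\langle 1_A\rangle\le (A,+)$ is infinite, so for any element $z$ of infinite order in a group $W$ the amalgamated product $W*_{z=1_A}A$ (identifying $\langle z\rangle$ with $\langle 1_A\rangle$) is defined, and this is precisely the group-theoretic realization of ``freely adjoining the $A$-powers $z^{\alpha}$ to $W$''. The role of the CSA hypothesis is that if $W$ is torsion-free CSA and $\langle z\rangle$ is a maximal abelian subgroup of $W$, then $\langle z\rangle$ is malnormal, so $W$ embeds into $W*_{z=1_A}A$, and by \cite[Theorem 5]{MR96} the result is again torsion-free CSA; thus the hypotheses reproduce themselves and the construction can be iterated.

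Next I would build the chain. Put $W_0=F(X)$, which is free, hence torsion-free CSA, and all of whose maximal abelian subgroups are infinite cyclic. Inductively, given $W_n\le F^A(X)$ known to be torsion-free CSA, I would select (using that $X$ is finite, so $W_n$ is a countable union of subgroups of the type already built, and appealing to \cite[Theorem 8]{MR96} for the organization of the choices) an element $z_n\in W_n$ generating a maximal abelian subgroup isomorphic to $\Z$ whose $A$-powers are not yet all present in $W_n$, and set $W_{n+1}=\langle W_n,z_n^A\rangle\le F^A(X)$. Because $F^A(X)$ contains a copy of $(A,+)$ whose intersection with $W_n$ is exactly $\langle z_n\rangle$, and because $\langle z_n\rangle$ is malnormal in $W_n$, the universal property of the amalgam shows that the natural map $W_n*_{z_n=1_A}A\to W_{n+1}$ is an isomorphism; and \cite[Theorem 5]{MR96} guarantees $W_{n+1}$ is again torsion-free CSA. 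This yields (a) and (b).

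For (c) I would arrange the successive choices of $z_n$ by a diagonal/bookkeeping argument so that every element of every $W_m$ that generates a maximal cyclic subgroup eventually has its $A$-powers adjoined. Then $W=\bigcup_n W_n$ is closed under the operation $g\mapsto g^{\alpha}$: if $g$ generates a maximal abelian subgroup, this is handled directly, and otherwise in the torsion-free CSA group $W_m$ containing $g$ the element $g$ is a power of a generator of such a subgroup (or lies in an already adjoined copy of $(A,+)$), so the axioms (a)--(d) for the induced $A$-action hold in $W$. Hence $W$ is an $A$-group containing and $A$-generated by $F(X)$, so by the universal property of the $A$-completion \cite[Theorems 1 and 2]{MR94} the canonical map $F(X)^A\to W$ is an isomorphism, which is (c). The one genuinely nontrivial point beyond bookkeeping is the amalgam decomposition in (b), and both it and the well-definedness of the $A$-action on the union rest on the preservation of the CSA property under extension of centralizers, i.e.\ on \cite[Theorem 5]{MR96}; the combinatorial task of arranging all the needed extensions into a single chain exhausting $F^A(X)$ is the content of \cite[Theorem 8]{MR96}, which I would simply invoke.
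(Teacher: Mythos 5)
Your proposal is correct and follows essentially the same route as the paper, which gives no independent proof of Proposition \ref{MR} but simply extracts it from \cite[Theorem 8]{MR96}, with \cite[Theorem 5]{MR96} (preservation of the torsion-free CSA property under extension of centralizers) and \cite[Theorems 1 and 2]{MR94} as the supporting ingredients. Your additional scaffolding (the bookkeeping for exhaustion and the remark that the injectivity of $W_n*_{z_n=1_A}A\to W_{n+1}$ is the genuinely nontrivial point delegated to \cite{MR96}) is consistent with how the paper uses the cited results.
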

 
Now we are ready to prove Corollary \ref{Zpgroup}.
  \begin{proof}[Proof of Corollary \ref{Zpgroup}]
  If $\phi$ is not injective, then there exists a finitely generated subgroup $G$ of $F^{\Z_p}(X)$ such that $G\cap \ker \phi$ is not trivial. By  Proposition \ref{MR},  there exists  a non-negative integer $k$ and a sequence $G_0\le G_1\le \ldots \le  G_k$ of subgroups of $F^A(X)$, where 
  \begin{enumerate}
  \item [(a)] $G_0=F(X_0)$ is the group generated by a finite subset $X_0$ of $X$;
  \item [(b)]   for $0\le i\le k-1$ there exists an element $z_i\in G_i$, generating a maximal abelian subgroup in $G_i$, and a  finitely generated subgroup $T_i$ of $(A,+)$ containing $1_A$, such that $z_i^{T_i}\le G_{i+1}$ and the canonical map $$G_{i}*_{z_i=1_A}T_i \to G_{i+1}$$ is an isomorphism;
  \item[(c)] $G$ is   a subgroup of $G_k$.
\end{enumerate}  Put $H_i=\phi(G_i)$ $A_i=\langle \phi(z_i)\rangle$ and $B_i=\phi(z_i^{T_i})$. 
  
  Let us show by induction on $i$ that $\phi_{|G_i}:G_i\to H_i$ is an isomorphism. It is clear for $i=0$. Assume we have proved it for $i<k$. Observe that, since $\phi(z_i)\ne 1$, $$B_i=\phi(z_i^{T_i})=\phi(z_i)^{T_i}\cong T_i \textrm{\ and \ } A_i=\phi(\langle z_i\rangle)=\phi(G_i\cap z_i^{T_i})=B_i\cap H_i.$$
  Then by Theorem \ref{iteratedextension}, $\phi_{|G_{i+1}}:G_{i+1}\to H_{i+1}$ is also an isomorphism.
  
  Thus, $\ker \phi \cap G_k=\{1\}$. This is a contradiction. 
  \end{proof}
  
We will also need the following consequence of Proposition \ref{MR}.
\begin{cor}\cite[Corollary 5]{MR96} \label{extring}
Let $B$ be  a ring with a torsion-free additive group and $A$ a subring of $B$. Then the canonical map $F^A(X)\to F^B(X)$ is injective.
\end{cor}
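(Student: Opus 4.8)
The plan is to reduce the statement, exactly as in the proof of Corollary \ref{Zpgroup}, to an application of Theorem \ref{iteratedextension} (equivalently Theorem \ref{main}), but now comparing the tower that builds $F^A(X)$ with the tower that builds $F^B(X)$. First I would observe that it suffices to show that every finitely generated subgroup $G$ of $F^A(X)$ has trivial intersection with the kernel of the canonical map $\psi: F^A(X) \to F^B(X)$. By Proposition \ref{MR}, any such $G$ is contained in some $W_k$, where $W_0 = F(X_0)$ for a finite subset $X_0 \subseteq X$ and, for $0 \le i \le k-1$, $W_{i+1} = \langle W_i, z_i^A \rangle$ with $z_i \in W_i$ generating a maximal abelian subgroup of $W_i$ and $W_i *_{z_i = 1_A} A \xrightarrow{\sim} W_{i+1}$; truncating $A$ to a finitely generated additive subgroup $T_i \ni 1_A$ containing the finitely many exponents needed, we get the same picture with $T_i$ in place of $A$ as in the proof of Corollary \ref{Zpgroup}. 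Let $W_i'$ denote the corresponding subgroups of $F^B(X)$, generated by $\psi(W_i)$, so $W_0' = F(X_0) \subseteq F^B(X)$ and $W_{i+1}' = \langle W_i', \psi(z_i)^{T_i} \rangle$.

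The core of the argument is then an induction on $i$ showing that $\psi$ restricts to an isomorphism $W_i \to W_i'$, or rather — since Theorem \ref{main} needs an ambient free pro-$p$ group — that each $W_i'$ sits inside a finitely generated free pro-$p$ group $\mathbf F$ via a strong embedding. Here I would fix a prime $p$ and use that $W_0' = F(X_0)$ embeds strongly into its pro-$p$ completion $\mathbf F$ (this is the base case, as in the setup of Theorem \ref{iteratedextension}: the canonical embedding of a finitely generated free group into its pro-$p$ completion is strong). For the inductive step, assuming $\psi_{|W_i}: W_i \to W_i'$ is an isomorphism and $W_i' \hookrightarrow \mathbf F$ is strong, set $A_i = \langle \psi(z_i) \rangle$. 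Since $z_i$ generates a maximal abelian subgroup of $W_i$ and $\psi_{|W_i}$ is an isomorphism, $A_i$ is a maximal abelian subgroup of $W_i'$. The additive group of $B$ is torsion-free, hence so is $T_i$, and $T_i / \langle 1_A \rangle$ is a finitely generated torsion-free abelian group; by Lemma \ref{naxab} the maximal abelian subgroup $C_{\mathbf F}(A_i) \cong (\Z_p, +)$, so if $p$ is chosen so that $T_i/\langle 1_A\rangle$ has no $p$-torsion (finitely many such conditions over $0 \le i \le k-1$, so almost all $p$ work — this is exactly the "almost all primes" phrasing of the main theorem), the embedding $A_i \hookrightarrow \mathbf F$ extends to an embedding $B_i := \psi(z_i)^{T_i} \cong T_i \hookrightarrow \mathbf F$ with $B_i \cap W_i' = A_i$. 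Now Theorem \ref{main} (via Corollary \ref{amalg}) applies: $W_{i+1}' = \langle W_i', B_i \rangle \cong W_i' *_{A_i} B_i$ and the embedding $W_{i+1}' \hookrightarrow \mathbf F$ is strong. On the other hand $W_{i+1} = W_i *_{z_i=1_A} T_i$ by Proposition \ref{MR}, and $\psi_{|W_i}: W_i \to W_i'$ together with the isomorphism $T_i \xrightarrow{\sim} B_i$ restricting correctly on the amalgamated subgroup induce an isomorphism $W_{i+1} \to W_{i+1}'$ of the amalgamated products. Thus $\psi_{|W_k}: W_k \to W_k'$ is an isomorphism, so $\ker \psi \cap W_k = \{1\}$, hence $\ker\psi \cap G = \{1\}$; as $G$ was an arbitrary finitely generated subgroup, $\psi$ is injective.

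The main obstacle, as in the proof of Corollary \ref{Zpgroup}, is purely bookkeeping: matching the two towers so that at each stage the amalgamated subgroups, the maximal abelian subgroups, and the generating elements correspond under $\psi$, and verifying that the abstract isomorphism $W_{i+1} \to W_{i+1}'$ built from the universal property of the amalgamated product is compatible with the inductive hypothesis (in particular that it restricts to the previously constructed isomorphism on $W_i$). The genuinely substantive input — that $\langle W_i', B_i \rangle$ is the amalgamated product and embeds strongly into $\mathbf F$ — is supplied wholesale by Theorem \ref{main}, and the only place where a choice of $p$ enters is the torsion-free-ness hypothesis, which is where the assumption that $B$ has torsion-free additive group is used. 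One should also double-check that the finitely many $z_i$ indeed lie in the earlier $W_i$ and generate maximal abelian subgroups there (this is Proposition \ref{MR}(b)) and that truncating $A$ to finitely generated $T_i$ does not affect whether $G \subseteq W_k$ — both are routine and already implicit in the proof of Corollary \ref{Zpgroup}.
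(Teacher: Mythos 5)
Your argument has a genuine gap at its central step, namely the assertion that ``Theorem \ref{main} (via Corollary \ref{amalg}) applies: $W_{i+1}' = \langle W_i', B_i\rangle \cong W_i' *_{A_i} B_i$''. Theorem \ref{main} and Corollary \ref{amalg} are statements about subgroups of the free pro-$p$ group $\mathbf F$: they produce an embedding $j$ of the \emph{abstract} amalgamated product $W_i' *_{A_i} T_i$ into $\mathbf F$ extending the strong embedding $W_i'\hookrightarrow\mathbf F$. They say nothing about the subgroup $\langle W_i', \psi(z_i)^{T_i}\rangle$ of $F^B(X)$, which is what $W_{i+1}'$ actually is. You have two maps out of $P:=W_i' *_{A_i}\psi(z_i)^{T_i}$: the injection $\rho:P\to\mathbf F$ supplied by Theorem \ref{main}, and the canonical surjection $\pi:P\to W_{i+1}'\le F^B(X)$ induced by the inclusions into $F^B(X)$; injectivity of $\rho$ implies injectivity of $\pi$ only if $\rho$ factors through $\pi$, i.e.\ only if there is a homomorphism $\theta:W_{i+1}'\to\mathbf F$ with $\theta\circ\pi=\rho$. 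Constructing such a $\theta$ by the universal property of the amalgamated product presupposes that $W_{i+1}'$ \emph{is} the amalgamated product, which is exactly what you are trying to prove; so the argument is circular at this point. This is also why your scheme cannot simply mimic the proof of Corollary \ref{Zpgroup}: there the target of $\phi$ is $\mathbf F$ itself, so Theorem \ref{iteratedextension} applies directly to the subgroups $H_i=\phi(G_i)$ of $\mathbf F$, whereas here the target $F^B(X)$ is not (known to be) inside any free pro-$p$ group.

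What is missing is a structural statement about the \emph{target}: that for a subgroup $H$ of $F^B(X)$ with maximal abelian subgroup $M$, the canonical map $H*_M M^B\to F^B(X)$ is injective --- i.e.\ the analogue for $B$ of the ``moreover'' clause of Proposition \ref{QcompletionCSA}, which the paper quotes only for $\Q$ and which is precisely the content of \cite[Theorem 8]{MR96}. Note that the paper does not reprove Corollary \ref{extring}; it cites \cite[Corollary 5]{MR96}, whose proof goes through the Myasnikov--Remeslennikov construction of the tensor completion (functoriality of the iterated centralizer extensions in the ring) and does not use pro-$p$ groups at all. If you import that structural result about $F^B(X)$, your induction closes immediately --- but then the pro-$p$ machinery becomes superfluous. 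The remaining ingredients of your write-up (strongness of $F(X_0)\hookrightarrow\mathbf F$, choosing $p$ so that the finitely many quotients $T_i/\langle 1_A\rangle$ have no $p$-torsion, $B_i\cap W_i'=A_i$ by maximality) are fine, but they only establish that $W_k$ embeds in a free pro-$p$ group, not that $\psi|_{W_k}$ is injective.
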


 \subsection{Proof of Theorem \ref{teoBau}}
  In this subsection we will prove that $F^{A}(X)$ is residually nilpotent for every commutative ring  $A$ with a torsion-free additive group.    The proof is based on the following general result communicated to us by a referee.
 \begin{pro}\label{referee}
 Let    $G$ be a group. Assume that for each $n\ge 0$ there are subgroups $G_n$  and $B_n$ of $G$ such that 
 
 \begin{enumerate}
 \item[(a)] $G_{n+1}=\langle G_n, B_n\rangle$;
 \item[(b)] $B_n$ is abelian and $G_n$ is residually torsion-free nilpotent;
 \item[(c)] $A_n=G_n\cap B_n$  and the canonical map $G_n*_{A_n} B_n\to G_{n+1}$ is an isomorphism;
 \item[(d)]  $G=\bigcup_{i=0}^\infty G_n$.
 \end{enumerate}
Then $G$ is residually torsion-free nilpotent.
 \end{pro}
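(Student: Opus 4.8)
The plan is to prove directly that an arbitrary nontrivial $g\in G$ is detected by a homomorphism of $G$ itself (not merely of some $G_n$) onto a torsion-free nilpotent group, the point being that a homomorphism separating $g$ inside $G_{n_0}$ need not extend to $G$. The trick is to propagate such a homomorphism up the tower $(G_n)_{n\ge 0}$ without ever enlarging the target. Fix $1\ne g\in G$. By (d), $g\in G_{n_0}$ for some $n_0$, and by (b) the group $G_{n_0}$ is residually torsion-free nilpotent, so there is a homomorphism $\psi\colon G_{n_0}\to Q$ with $Q$ torsion-free nilpotent, of class $c$ say, and $\psi(g)\ne 1$. First I would replace $Q$ by its Mal'cev completion $N:=Q^{\Q}$, the $\Q$-completion of $Q$ in the sense of Section \ref{proofs}: classically the $\Q$-completion of a torsion-free nilpotent group is again nilpotent of the same class, is torsion-free and uniquely $\Q$-radicable, and the canonical map $Q\to N$ is injective. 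Composing with $Q\hookrightarrow N$ yields $\pi_{n_0}\colon G_{n_0}\to N$ with $\pi_{n_0}(g)\ne 1$.

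Next I would extend $\pi_{n_0}$ one step at a time, always into the \emph{same} group $N$. Assume $\pi_n\colon G_n\to N$ has been constructed for some $n\ge n_0$. By (a) and (c), $G_{n+1}=G_n*_{A_n}B_n$, so by the universal property of the amalgamated free product it is enough to produce a homomorphism $\beta\colon B_n\to N$ with $\beta|_{A_n}=\pi_n|_{A_n}$, and then set $\pi_{n+1}$ equal to $\pi_n$ on $G_n$ and to $\beta$ on $B_n$. Since $A_n=G_n\cap B_n$ lies in the abelian group $B_n$, the image $\pi_n(A_n)$ is an abelian subgroup of $N$; choose, by Zorn's lemma, a maximal abelian subgroup $M$ of $N$ containing $\pi_n(A_n)$. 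Because $N$ is torsion-free nilpotent and $\Q$-radicable, $M$ is closed under roots: if $x\in M$ and $y^k=x$ in $N$, then (using that in a torsion-free nilpotent group an element centralising a power of $y$ centralises $y$) $C_N(x)\le C_N(y)$, so $\langle M,y\rangle$ is abelian and $y\in M$ by maximality. Hence $M$ is a $\Q$-radicable torsion-free abelian group, i.e.\ a $\Q$-vector space, so it is an injective $\Z$-module; therefore the homomorphism $A_n\to\pi_n(A_n)\hookrightarrow M$ extends along the inclusion $A_n\hookrightarrow B_n$ to the required $\beta\colon B_n\to M\subseteq N$.

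Finally, since $G_{n+1}=\langle G_n,B_n\rangle\supseteq G_n$, the $G_n$ form an ascending chain with union $G$, so the compatible family $(\pi_n)_{n\ge n_0}$ glues to a homomorphism $\Pi\colon G\to N$ with $\Pi(g)=\pi_{n_0}(g)\ne 1$. As $N$ is torsion-free nilpotent, $g$ is separated, and since $g$ was arbitrary, $G$ is residually torsion-free nilpotent. I expect there to be no serious obstacle: the only point requiring care is that the classical facts about $\Q$-completions of torsion-free nilpotent groups (existence, preservation of nilpotency class, injectivity of $Q\to N$, unique $\Q$-radicability, and that a maximal abelian subgroup of such a group is a $\Q$-subspace) must be quoted without a finite-generation hypothesis, since the $G_n$, the $B_n$, hence $Q$ and the subgroup $M$, need not be finitely generated. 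Everything else is a diagram chase through the universal property of $*_{A_n}$ and the injectivity of $\Q$-vector spaces as $\Z$-modules.
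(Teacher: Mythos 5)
Your proof is correct and follows essentially the same route as the paper: separate $g$ in some $G_{n_0}$ by a map to a torsion-free nilpotent group, pass to its Mal'cev $\Q$-completion $N$, extend up the tower one amalgam at a time via the universal property of $G_n*_{A_n}B_n$, and glue. The only cosmetic difference is at the extension step, where the paper extends $\pi_n|_{A_n}$ through $A_n^{\Q}\le B_n^{\Q}$ while you extend it into a maximal abelian (hence divisible, hence injective) subgroup of $N$; both arguments rest on the same classical facts about $\Q$-completions of torsion-free nilpotent groups.
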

 \begin{proof} Let $x\in G$. We want to show that $G$ has  a torsion-free nilpotent quotient such that the image of $x$ in this quotient is not trivial. 
 
 Let $x\in G_m$ for some $m\ge 0$. Since $G_m$  is residually torsion-free nilpotent, there exists a torsion-free nilpotent group $N$ and the map $\phi_m:G_m\to N$ such that $\phi_m(x)\ne 1$.  We can embed $N$ in its Malcev $\Q$-completion.  
 Thus, without loss of generality, we may assume that $N$ is also a $\Q$-group.
 
We will show that for each $n\ge m$ there exist $\phi_n:G_n\to N$ such that the restriction of $\phi_n$ on $G_{n-1}$ is $\phi_{n-1}$. We construct $\phi_n$ by induction. Assume that we constructed $\phi_n$ for $m\le n\le k$. Since $N$ is a $\Q$-group, we can extend $\phi_{k}$ from $A_k$ to $A_k^\Q$ and so to $B_k^\Q$ and $B_k$. Thus we have a homomorphism $\tilde \phi_k: B_k\to N$ that extends    $(\phi_k)_{|A_k}:A_k\to N$. Now, by the universal property of  the amalgamated product, there exists a homomorphism $\phi_{k+1}:G_{k+1}\to N$ whose restriction on $G_k$ is $\phi_k$ and on $B_k$ is $\tilde \phi_k$.

Let   $\phi:G\to N$ be the homomorphism satisfying  $\phi(g)=\phi_n(g)$ if $g\in G_n$. Then $\phi$ is well-defined and $\phi(x)\ne \{1\}$.
 \end{proof}
 
   \begin{cor} \label{residuallynilpotent}
  Let $X$ be a   set and $A$ a countable commutative ring  with a torsion-free additive group. Then $F^{A}(X)$ is residually torsion-free nilpotent.
   \end{cor}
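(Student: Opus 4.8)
The plan is to combine three ingredients already available: the tower description of $F^A(X)$ from Proposition \ref{MR}, the amalgamation results Theorem \ref{iteratedextension} in the form of Corollary \ref{amalg}, and the transfer principle of Proposition \ref{referee}. First I would reduce to the case $X$ finite. Every element of $F^A(X)$ lies in the $A$-subgroup generated by some finite $X_0\subseteq X$, which is isomorphic to $F^A(X_0)$ and is a retract of $F^A(X)$ (the retraction being the $A$-homomorphism fixing $X_0$ and killing $X\setminus X_0$). Since a retraction pulls back a torsion-free nilpotent quotient in which a prescribed element survives, $F^A(X)$ is residually torsion-free nilpotent as soon as every $F^A(X_0)$ with $X_0$ finite is.

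So assume $X$ finite and fix subgroups $W_0\le W_1\le\cdots$ of $F^A(X)$ as in Proposition \ref{MR}, with $W_{n+1}=\langle W_n,z_n^A\rangle\cong W_n*_{\langle z_n\rangle}z_n^A$ and $\bigcup_n W_n=F^A(X)$. Using that $A$ is countable, enumerate $A=\{a_1,a_2,\dots\}$ and set $T^{(n)}_j=\langle 1_A,a_1,\dots,a_j\rangle\le(A,+)$, an increasing sequence of \emph{finitely generated} (hence free) abelian groups containing $1_A$ whose union is $(A,+)$; write $z_n^{T^{(n)}_j}\le z_n^A$ for the corresponding finitely generated abelian subgroups of $W_{n+1}$. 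A diagonal interleaving of the parameters $n$ and $j$ then produces an increasing chain $G_0\le G_1\le\cdots$ of finitely generated subgroups of $F^A(X)$ with $G_0=F(X)$, $\bigcup_m G_m=F^A(X)$, and, for each $m$, $G_{m+1}=\langle G_m,B_m\rangle$ where $B_m=z_n^{T^{(n)}_j}$ for suitable $n,j$ (with $z_n$ already in $G_m$), where $A_m:=G_m\cap B_m$ equals the previously adjoined piece $z_n^{T^{(n)}_{j-1}}$ (convention $T^{(n)}_0=\langle 1_A\rangle$) and is a maximal abelian subgroup of $G_m$, and where the canonical map $G_m*_{A_m}B_m\to G_{m+1}$ is an isomorphism. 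Here the isomorphism comes from the amalgam decomposition of Proposition \ref{MR} together with the standard fact that an amalgamated product embeds into one obtained by enlarging a factor over the same amalgamated subgroup, while the maximality of $A_m$ in $G_m$ comes from the fact that the relevant centralizers in the iterated centralizer extension $F^A(X)$ are already realized inside small sub-amalgams (equivalently, from the CSA property of the groups constructed in \cite{MR96}).

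Next I would prove, by induction on $m$, that there is a finite set of primes $S_m$ such that $G_m$ is SE($p$) (Definition \ref{defsep}) for every $p\notin S_m$; since a finitely generated group embedding into a free pro-$p$ group is residually torsion-free nilpotent (the lower central quotients of a free pro-$p$ group being torsion-free nilpotent with trivial intersection), this makes every $G_m$ residually torsion-free nilpotent. For $m=0$, $G_0=F(X)$ is free and strongly embeds into its (finitely generated) pro-$p$ completion for every prime, so $S_0=\emptyset$. For the step, $B_m/A_m$ is a finitely generated abelian group, hence has no $p$-torsion for all $p$ outside a finite set $P_m$; put $S_{m+1}=S_m\cup P_m$. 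For $p\notin S_{m+1}$ the group $G_m$ is SE($p$), hence admits a strong embedding into a finitely generated free pro-$p$ group, $A_m$ is a finitely generated maximal abelian subgroup of $G_m$, and $B_m\supseteq A_m$ is finitely generated torsion-free abelian with $B_m/A_m$ having no $p$-torsion; so Corollary \ref{amalg} applies and shows that $G_m*_{A_m}B_m\cong G_{m+1}$ is SE($p$). Finally the chain $(G_m)_{m\ge 0}$ satisfies all hypotheses of Proposition \ref{referee} (one has $G_{m+1}=\langle G_m,B_m\rangle$, $B_m$ abelian and $G_m$ residually torsion-free nilpotent, $A_m=G_m\cap B_m$ with $G_m*_{A_m}B_m\to G_{m+1}$ an isomorphism, and $\bigcup_m G_m=F^A(X)$), so $F^A(X)$ is residually torsion-free nilpotent; together with the reduction of the first paragraph this proves the corollary.

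The hard part will be the construction of the chain $(G_m)$ in the second paragraph: one must organize the diagonal enumeration so that at every stage the freshly amalgamated abelian subgroup $B_m$ meets $G_m$ exactly in $A_m$ and $A_m$ is maximal abelian in $G_m$ — precisely the hypotheses under which Proposition \ref{critamal}, and hence Corollary \ref{amalg}, can be invoked. This is a question about maximal (indeed malnormal) abelian subgroups of iterated centralizer extensions, and it is the point where the structure theory of \cite{MR96} is genuinely needed; granting it, everything else is a formal assembly of Proposition \ref{MR}, Corollary \ref{amalg} and Proposition \ref{referee}.
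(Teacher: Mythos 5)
Your proposal is correct and follows essentially the same route as the paper: reduce to finite $X$ by retraction, use Proposition \ref{MR} together with the countability of $A$ to refine the tower into finitely generated subgroups $G_m$ with finitely generated abelian amalgamated pieces, show each $G_m$ is residually torsion-free nilpotent by embedding it into a free pro-$p$ group, and conclude with Proposition \ref{referee}. The only difference is that you make explicit (via the finite exceptional sets $S_m$ of primes and Corollary \ref{amalg}) the prime-selection argument that the paper compresses into the single citation ``by Theorem \ref{iteratedextension}, the $G_n$ are residually torsion-free nilpotent,'' which is a reasonable and faithful unpacking.
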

  \begin{proof} First assume that $X$ is finite.
Since $A$ is countable, by Proposition \ref{MR}   there are subgroups $\{G_n\}_{n\ge 0}$  and $\{B_n\}_{n\ge 0}$ of $F^{A}(X)$ such that 
 \begin{enumerate}
 \item[(a)] $G_{n+1}=\langle G_n, B_n\rangle$;
 \item[(b)] $B_n$ is abelian and finitely generated;

 \item[(c)]  $A_n=G_n\cap B_n$  is a maximal abelian subgroup of $G_n$ and the canonical map $G_n*_{A_n} B_n\to G_{n+1}$ is an isomorphism;
 \item[(d)]  $F^A(X)=\bigcup_{i=0}^\infty G_n$.
 \end{enumerate}
By Theorem \ref{iteratedextension}, $G_n$ are residually torsion-free nilpotent.  Thus, Proposition \ref{referee} implies that $F^{A}(X)$ is residually torsion-free nilpotent.

Now assume that $X$ is an arbitrary set.  Given an element $1\ne g$ of $F^A(X)$, it belongs to $F^A(X_0)$ for some finite subset $X_0$ of $X$, and $F^A(X_0)$ is a retract of $F^A(X)$. 
As we have already proved,  there exists a homomorphism $\phi:F^A(X_0)\to N$, where $N$ is  torsion-free nilpotent such that $\phi(g)\ne 1$. This finishes the proof.
  \end{proof}
  \begin{rem} The proof of the previous corollary can also be adapted to the case where $A$ is not countable. We prove it, using a different method, in
Corollary \ref{residuallynilpotent2}.
  \end{rem}
  Let  $X=\{x_i\colon i\in I\}$ and  $Y=\{y_i\colon i\in I\}$ be two sets indexed by the elements of a set $I$. Let  
$A$ be a commutative ring. We say that $A$ is a {\bf binomial domain} if $A$ is a domain and $a \choose n$ belongs to $A$ for every $a\in A$. 
For example $\Z_p$ and $\Q$-algebras are binomial domains.

Assume that $A$ is a binomial domain. If $\Delta_A$  denotes the ideal of $A\langle \! \langle  Y\rangle \! \rangle $ generated by $Y$ then $1+\Delta_A$ is a subgroup
of the group of units of $A\langle \! \langle  Y\rangle \! \rangle $. We can define an action of $A$ on $1+\Delta_A$  in the following way:
$$(1+f)^a=1+\sum_{n=1}^\infty {a \choose n} f^n\ (a\in A, f\in \Delta_A).$$
Then, by \cite{Wa76}, $1+\Delta_A$ provided with this action is an $A$-group. Therefore, the map $x_i\mapsto 1+y_i$ can be uniquely extended to an $A$-homomorphism 
$$\phi_A: F^A(X)\to 1+\Delta_A$$ called  the {\bf Magnus representation of $F^A(X)$}. Magnus (see, for example, \cite{MKS}) proved that $\phi_\Z$ is injective.  Now we prove the  main result of this subsection.
\begin{teo}\label{MagnusA} Let $X$ be an arbitrary set and  $A$ be a binomial domain.Then 
the map $\phi_{A}: F^{A}(X)\to 1+\Delta_{A}$ is injective.
\end{teo}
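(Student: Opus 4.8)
The plan is to reduce the statement, in several stages, to the single case $A=\Z_p$, which is supplied by Corollary \ref{Zpgroup}. First I would reduce to finite $X$: a nontrivial $g\in F^{A}(X)$ lies in $F^{A}(X_0)$ for some finite $X_0\subseteq X$, which is a retract of $F^{A}(X)$ (the inclusion $X_0\hookrightarrow X$ is split by the map $X\to X_0$ collapsing $X\setminus X_0$), and by the uniqueness part of the universal property of free $A$-groups the restriction of $\phi_A$ to $F^{A}(X_0)$ is the Magnus representation of $F^{A}(X_0)$ followed by the evident inclusion $A\langle\!\langle Y_0\rangle\!\rangle\hookrightarrow A\langle\!\langle Y\rangle\!\rangle$; so it suffices to treat finite $X$.

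Next I would treat $A=\Z_p$. Writing $\mathbf F$ for the pro-$p$ completion of $F(X)$, Corollary \ref{Zpgroup} together with Proposition \ref{Lazard} identifies the composite $F^{\Z_p}(X)\hookrightarrow\mathbf F\hookrightarrow 1+I_{\mathbf F}\cong 1+\Delta_{\F_p}$ as an injective homomorphism of $\Z_p$-groups sending $x_i$ to $1+y_i$. Reduction of coefficients modulo $p$ gives a map $r_*\colon 1+\Delta_{\Z_p}\to 1+\Delta_{\F_p}$ which commutes with integer powers and is continuous, hence is a homomorphism of $\Z_p$-groups. Since $r_*\circ\phi_{\Z_p}$ and the composite above are both $\Z_p$-homomorphisms extending $x_i\mapsto 1+y_i$, they agree by uniqueness, and $\phi_{\Z_p}$ is injective. (This incidentally recovers the classical injectivity of $\phi_{\Z}$, via $\Z\hookrightarrow\Z_p$.)

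For a general binomial domain $A$ and $1\ne g\in F^{A}(X)$, I would let $R\le A$ be the subring generated by the finitely many elements of $A$ occurring as exponents in an expression of $g$, so $R$ is a finitely generated characteristic-zero domain and $g$ is the image of some $g'\ne 1$ under $F^{R}(X)\hookrightarrow F^{A}(X)$ (Corollary \ref{extring}). The key arithmetic input is the well-known fact that such an $R$ embeds into $\Z_p$ for some prime $p$; fixing an injective ring homomorphism $\psi\colon R\hookrightarrow\Z_p$ one obtains an induced $R$-homomorphism $\bar\psi\colon F^{R}(X)\to F^{\Z_p}(X)$. Using Proposition \ref{MR} (restricted to the finitely generated subgroups of $(R,+)$ actually used) one places $g'$ inside a tower $F(X)=G_0\le\cdots\le G_N\ni g'$ with $G_{n+1}\cong G_n*_{\langle z_n\rangle}T_n$, where $T_n\le(R,+)$ is finitely generated and $z_n$ generates a maximal abelian subgroup of $G_n$; then, exactly as in the proof of Corollary \ref{Zpgroup}, Theorem \ref{iteratedextension} applied to $\bar\psi(G_n)\le F^{\Z_p}(X)$ shows by induction that $\bar\psi$ is injective on each $G_n$ — injectivity of $\psi$ being precisely what keeps the comparison maps $G_n*_{\langle z_n\rangle}T_n\to\bar\psi(G_n)*_{\langle\bar\psi(z_n)\rangle}\bar\psi(z_n)^{\psi(T_n)}$ isomorphisms. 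Hence $\bar\psi(g')\ne1$, so $\phi_{\Z_p}(\bar\psi(g'))\ne1$ by the previous step. Finally, $\psi$ extends to a homomorphism of binomial rings from the binomial closure of $R$ in $A$ (which carries every coefficient of $\phi_A(g)$) into $\Z_p$, and uniqueness of $R$-homomorphisms gives $\hat\psi_*(\phi_A(g))=\phi_{\Z_p}(\bar\psi(g'))\ne1$; therefore $\phi_A(g)\ne1$.

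The main obstacle I expect is this last reduction to $\Z_p$: one must (i) know that a finitely generated characteristic-zero domain embeds into some $\Z_p$, and (ii) check that under such an embedding the amalgamated-product structure produced by Proposition \ref{MR} is preserved, so that Theorem \ref{iteratedextension} applies without change. A subsidiary technical point is the verification — via continuity and the density of $\Z$ in $\Z_p$ — that the mod-$p$ reduction $r_*$ is $\Z_p$-equivariant, which is what licenses the appeal to uniqueness of $A$-homomorphisms in the $\Z_p$ case.
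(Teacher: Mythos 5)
Your argument is correct in outline, but it takes a genuinely different route from the paper's. The paper first reduces to a countable $\Q$-algebra via Corollary \ref{extring}, and then combines two inputs: the residual torsion-free nilpotence of $F^A(X)$ (Corollary \ref{residuallynilpotent}, which rests on Theorem \ref{iteratedextension} and Proposition \ref{referee}), and the known faithfulness of the Magnus representation on the free nilpotent $A$-groups $F_n^A$ with $F_n=F(X)/\gamma_n(F(X))$ (\cite[Theorem 23]{GMRS97}); injectivity of $\phi_A$ then follows because $\bigcap_n K_n=\{1\}$. You instead specialize all the way down to the single coefficient ring $\Z_p$ and read off the conclusion from Corollary \ref{Zpgroup} together with Proposition \ref{Lazard}. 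Both proofs ultimately rest on Theorem \ref{iteratedextension}, but yours trades the appeal to \cite{GMRS97} and to the residual-nilpotence corollary for an external arithmetic input: the embedding theorem of Cassels, which says that a finitely generated field of characteristic zero embeds into $\Q_p$ for infinitely many $p$, with finitely many prescribed nonzero elements landing in $\Z_p^*$. That theorem is exactly what your step (i) needs, it is not supplied anywhere in the paper, and it must be cited; with it, the step is sound. Two smaller remarks. First, the injectivity of $\bar\psi\colon F^{R}(X)\to F^{\Z_p}(X)$ is immediate from Corollary \ref{extring} applied to $\psi(R)\subseteq\Z_p$, so re-running the induction of Corollary \ref{Zpgroup} there is redundant (though not wrong). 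Second, the extension of $\psi$ to the binomial closure of $R$ in $A$ does work, but the reason deserves a line: that closure lies in the intersection of $A$ with the field of fractions of $R$, the extension of $\psi$ to the fields of fractions sends each iterated binomial coefficient ${a \choose n}$ to ${\psi(a) \choose n}$, and the latter lies in $\Z_p$ because $\Z_p$ is itself a binomial ring; torsion-freeness of $\Z_p$ then forces $\hat\psi_*$ to commute with the $R$-action on $1+\Delta$. With these points filled in, your proof is a valid alternative, and it handles uncountable $A$ directly, without the paper's detour through Corollary \ref{residuallynilpotent2}.
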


\begin{proof}
Let $p$ be a prime. First, observe that by Corollary~\ref{Zpgroup} and Proposition~\ref{Lazard}, the map $\phi_{\mathbb{Z}_p}$ is injective.

Now let us consider the general case. Suppose that $\phi_A$ is not injective. Then there are a finitely generated subring $A_0 \subseteq A$ and a finite subset $X_0 \subseteq X$ such that the restriction of $\phi_A$ to $F^{A_0}(X_0)$,
\[
\phi_A|_{F^{A_0}(X_0)} \colon F^{A_0}(X_0) \to 1 + \Delta_A,
\]
is not injective.  
By \cite{Ca76}, we can embed $A_0$ into $\mathbb{Z}_p$ for some prime $p$. Let $A_1$ denote the binomial closure of $A_0$ in $\mathbb{Z}_p$, which is also isomorphic to the binomial closure of $A_0$ in $A$. Thus, $\phi_{A_1}$ is not injective.

By Proposition~\ref{extring}, we have the following embeddings:
\[
F^{A_1}(X_0) \hookrightarrow F^{\mathbb{Z}_p}(X_0).
\]
Hence, we obtain the following commutative diagram of $A_1$-groups:
\[
\begin{array}{ccc}
  F^{\mathbb{Z}_p}(X_0) & \xrightarrow{\ \phi_{\mathbb{Z}_p}\ } & 1 + \Delta_{\mathbb{Z}_p} \\
  \hookuparrow & & \hookuparrow \\
 F^{A_1}(X_0) & \xrightarrow{\ \phi_{A_1}\ } & 1 + \Delta_{A_1}
\end{array}
\]
This contradicts the assumed non-injectivity of $\phi_{A_1}$. Hence, $\phi_A$ must be injective.
\end{proof}

  The following definition has been suggested to us by a referee. We say that an $A$-group $G$  is {\bf $A$-torsion-free} if for every $1\ne g\in G$, the map $A\to G$ that sends $a$ to $g^a$ is injective.
 \begin{cor}\label{qttf} \label{residuallynilpotent2} Let $A$ be a   commutative ring  with a torsion-free additive group.
 Then the group $F^{A}(X)$ is residually-($A$-torsion-free nilpotent).
 \end{cor}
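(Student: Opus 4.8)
The plan is to embed $F^A(X)$ into the Magnus power series group and then to produce inside that group an explicit descending chain of normal $A$-subgroups whose successive quotients are nilpotent and $A$-torsion-free.

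First I would reduce to the case in which $A$ is a binomial domain. If $A$ is a domain, then, its additive group being torsion-free, it embeds into the $\Q$-algebra $A_\Q:=A\otimes_\Z\Q$, which is again a domain and hence a binomial domain; by Corollary \ref{extring} the inclusion $A\hookrightarrow A_\Q$ induces an injective $A$-homomorphism $F^A(X)\hookrightarrow F^{A_\Q}(X)$, and an $A_\Q$-torsion-free nilpotent $A_\Q$-group becomes, upon restricting the action along $A\to A_\Q$, an $A$-torsion-free nilpotent $A$-group. So it is enough to separate the points of $F^{A_\Q}(X)$ by $A_\Q$-homomorphisms onto $A_\Q$-torsion-free nilpotent $A_\Q$-groups, i.e. to prove the statement when $A$ is a binomial domain.

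Assume then that $A$ is a binomial domain. By Theorem \ref{MagnusA} the Magnus representation $\phi_A\colon F^A(X)\to 1+\Delta_A$ is injective. For each $n\ge 1$ put $N_n:=(1+\Delta_A)/(1+\Delta_A^{\,n})$. Using $(1+f)^a=1+\sum_{m\ge 1}\binom{a}{m} f^m$ one checks that $1+\Delta_A^{\,n}$ is stable under the $A$-powering operation (for $f\in\Delta_A^{\,n}$ each term $\binom{a}{m} f^m$ lies in $\Delta_A^{\,mn}\subseteq\Delta_A^{\,n}$), so, being also normal, it is a normal $A$-subgroup of $1+\Delta_A$ and $N_n$ is an $A$-group; moreover $N_n$ is nilpotent of class at most $n-1$, since $\gamma_n(1+\Delta_A)\subseteq 1+\Delta_A^{\,n}$. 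The crucial point is that $N_n$ is $A$-torsion-free. Let $\bar u\in N_n$ be nontrivial, represented by $u=1+f\in 1+\Delta_A$, and let $f_k\ne 0$ be the lowest-degree homogeneous component of $f$, of some degree $k$ with $1\le k\le n-1$, regarded as an element of the free $A$-module $\Delta_A^{\,k}/\Delta_A^{\,k+1}$. For $a,b\in A$ one has $u^a u^{-b}=u^{a-b}$, and the degree-$k$ homogeneous component of $u^{a-b}-1$ equals $(a-b)f_k$; if $a\ne b$ then $(a-b)f_k\ne 0$ because $A$ is a domain, so $u^{a-b}\notin 1+\Delta_A^{\,n}$ and therefore $\bar u^{\,a}\ne\bar u^{\,b}$ in $N_n$. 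Hence $a\mapsto\bar u^{\,a}$ is injective.

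Finally, $\bigcap_{n\ge 1}(1+\Delta_A^{\,n})=\{1\}$, so for every $1\ne g\in F^A(X)$ we have $\phi_A(g)\ne 1$, hence $\phi_A(g)\notin 1+\Delta_A^{\,n}$ for some $n$, and then the composite $A$-homomorphism $F^A(X)\xrightarrow{\phi_A}1+\Delta_A\twoheadrightarrow N_n$ sends $g$ to a nontrivial element of the $A$-torsion-free nilpotent $A$-group $N_n$; this is exactly what is required for $F^A(X)$ to be residually-($A$-torsion-free nilpotent). I expect the only genuine obstacle to be making the reduction in the second paragraph airtight and tracking where the domain hypothesis on $A$ is used — it is needed precisely for the $A$-torsion-freeness of $N_n$ — whereas the computations in the last two paragraphs are the same power-series manipulations already employed in the proof of Theorem \ref{MagnusA}.
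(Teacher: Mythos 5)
Your argument is correct and follows the paper's proof essentially verbatim: the same reduction via Corollary \ref{extring} to a $\Q$-algebra, the same appeal to Theorem \ref{MagnusA}, and the same separating quotients $N_n=(1+\Delta_A)/(1+\Delta_A^{\,n})$; you merely write out the verification that $N_n$ is a nilpotent, $A$-torsion-free $A$-group, which the paper asserts in one line. Concerning the point you flag at the end: the domain hypothesis is indeed exactly where it enters, and your reduction consequently only treats the case where $A$ is a domain --- but this is the only case in which the corollary can hold. If $A$ has zero divisors, say $ab=0$ with $a,b\ne 0$, then in any $A$-torsion-free $A$-group $N$ and any $1\ne g\in N$ one would have $g^{a}\ne 1$ while $(g^{a})^{b}=g^{ab}=g^{0}=1=(g^{a})^{0}$, contradicting injectivity of $c\mapsto (g^{a})^{c}$; so the only $A$-torsion-free $A$-group is trivial and the nontrivial group $F^{A}(X)$ cannot be residually ($A$-torsion-free nilpotent). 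The same defect appears concretely in your $N_n$: for $A=\Q[x]/(x^2)$ the element $u=1+xy_1$ satisfies $u^{x}=1$, so $N_2$ is not $A$-torsion-free. Thus your restriction to domains is the right move, and the statement (and the paper's one-line proof, which implicitly uses the domain property at the same step) should carry that hypothesis; apart from this your power-series computations and the deduction in the final paragraph are all in order.
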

 \begin{proof}
 By Corollary \ref{extring},  we can assume that $A$ is a  $\Q$-algebra. Hence the corollary  follows from Theorem \ref{MagnusA} because the $A$-groups $1+\Delta_A/1+(\Delta_A)^n$ are $A$-torsion-free  and nilpotent.
 \end{proof}

 \subsection{Proof of Theorem \ref{Qlimit}}
The proof of Theorem \ref{Qlimit}  uses a particular case of \cite[Theorem 8]{MR96} that we describe now.
 \begin{pro}\label{QcompletionCSA}
 Let $G$ be a countable torsion-free CSA-group. Then the $\Q$-completion $G^\Q$ of $G$ is a direct union of subgroups $W_0\le W_1\le \ldots $ such that
 \begin{enumerate}
 \item 
  [(a)]$W_0=G$;
  \item [(b)]   for $i\ge 0$  there exists a  maximal abelian subgroup $A_i$ of $ W_i$   such that  $W_{i+1}$ is the image of  the canonical map $W_{i}*_{A_i}A_i^{\Q} \to G^\Q$.
 \end{enumerate}
 Moreover, if $H$ is a subgroup $G^{\Q}$ and $A$ is a maximal abelian subgroup of $H$, then  the canonical map $H*_{A}A^{\Q}\to G{^\Q}$ is injective.
 \end{pro}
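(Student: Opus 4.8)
The plan is to deduce this proposition from the structure theory of tensor completions of CSA-groups in \cite[Theorem 8]{MR96}, in essentially the same way that Proposition \ref{MR} extracts the case of free $A$-groups. Two facts do the work. First, \cite[Theorem 5]{MR96}: if $W$ is a torsion-free CSA-group and $A$ is a maximal abelian subgroup of $W$ (equivalently, the centralizer in $W$ of any of its non-trivial elements, so in particular $A$ is malnormal in $W$), then for any torsion-free abelian group $B$ containing $A$ the centralizer extension $W*_AB$ is again torsion-free CSA, and a maximal abelian subgroup of $W*_AB$ containing $A$ is $B$ itself. Second, a torsion-free CSA-group all of whose maximal abelian subgroups are $\Q$-vector spaces is a $\Q$-group: an $n$-th root of $g\ne1$ lies in the abelian divisible group $C(g)$, so it exists, and it is unique because $C(g)$ is torsion-free; conversely the maximal abelian subgroups of any $\Q$-group are $\Q$-vector spaces.

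First I would realize $G^{\Q}$ as a direct limit of a countable chain of centralizer extensions. In a torsion-free CSA-group ``commuting'' is an equivalence relation on the non-identity elements, and the maximal abelian subgroups are exactly the sets obtained by adjoining the identity to an equivalence class; hence a countable such group has only countably many maximal abelian subgroups, and the same remains true at every stage of the construction. A bookkeeping argument then produces a chain $W_0=G\le W_1\le W_2\le\cdots$ with $W_{i+1}=W_i*_{A_i}A_i^{\Q}$ for a suitable maximal abelian $A_i\le W_i$, such that in $W:=\bigcup_{i\ge0}W_i$ every maximal abelian subgroup is a $\Q$-vector space. By the first fact each $W_i$, hence $W$, is torsion-free CSA, so by the second fact $W$ is a $\Q$-group; since $W$ is generated as a $\Q$-group by $G$ and has the universal property of the $\Q$-completion, $W\cong G^{\Q}$. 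This gives (a), and it gives (b) once one knows that each map $W_i*_{A_i}A_i^{\Q}\to G^{\Q}$ is injective, which is the case $H=W_i$ of the ``moreover'' clause.

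The remaining and essential point is the ``moreover'' clause. Given $H\le G^{\Q}$ and a maximal abelian subgroup $A$ of $H$, choose $1\ne a\in A$ and set $\overline{A}=C_{G^{\Q}}(a)$; since $G^{\Q}$ is CSA with all maximal abelian subgroups $\Q$-vector spaces, $\overline{A}$ is such a subgroup, $A\le\overline{A}$, maximality of $A$ in $H$ forces $H\cap\overline{A}=A$, the $\Q$-span of $A$ in $\overline{A}$ is a copy of $A^{\Q}$, and $H\cap A^{\Q}=A$. What has to be shown is that $\langle H,A^{\Q}\rangle\le G^{\Q}$ is the amalgam, i.e.\ that no reduced word of $H*_AA^{\Q}$ collapses in $G^{\Q}$. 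Any such word involves only finitely many elements, which lie in a finitely generated subgroup of some $W_m$, so the question reduces to tracking reduced words through a finite sequence of centralizer extensions using malnormality of the amalgamated subgroups and the normal form theorem for amalgamated products---precisely the argument carried out in \cite[Theorem 8]{MR96}. I expect this normal-form bookkeeping to be the main obstacle to a self-contained proof, so at this step I would invoke \cite{MR96} rather than reprove it.
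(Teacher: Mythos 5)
The paper offers no proof of this proposition: it is stated verbatim as a particular case of \cite[Theorem 8]{MR96}, which is exactly the reference to which you defer the essential normal-form/injectivity argument. Your outline of how the statement is extracted from that source (iterated centralizer extensions preserving the torsion-free CSA property via \cite[Theorem 5]{MR96}, the characterization of $\Q$-groups among torsion-free CSA-groups by divisibility of maximal abelian subgroups, and the universal property checked on the direct union) is correct and consistent with the paper's treatment.
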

 
 \begin{proof}[Proof of  Theorem \ref{Qlimit}] Our definition of a limit group from the introduction and
  Proposition \ref{MR} show that  the limit groups are exactly finitely generated subgroups of $F^{\Z[t]}(X)$. Since  $F^{\Z[t]}(X)$  is a CSA-group,
 its $\Q$-completion  can be calculated using  Proposition \ref{QcompletionCSA}. In particular,  $G^\Q$ is a subgroup of the $\Q$-completion of  $F^{\Z[t]}(X)$ and so it is a subgroup of $F^{\Q[t]}(X)$. Hence by Corollary \ref{qttf}, it is  residually torsion-free nilpotent.
\end{proof}

 \section{Linearity of free $\Q$-groups and free pro-$p$ groups}\label{linearity}
 We finish this  paper with a discussion on another two well-known problems concerning linearity of free $\Q$-groups and free pro-$p$ groups. 
 
 The problem of whether a free $\Q$-group $F^{\Q}(X)$ is linear appears in \cite[Problem F13]{BMS} and it is attributed to I. Kapovich (see also \cite[Problem 13.39(b)]{Kou}).
  The problem of whether a free pro-$p$ group $\mathbf F$ is linear is usually attributed to A. Lubotzky  (for example, we discussed this question  in Jerusalem in November,  2001). 
  
  In the context of  profinite groups, one can consider two kinds of linearity (see, for example, \cite{Ja02}). On one hand, we say that a profinite group $G$ is {\bf linear} if  it is linear as an abstract group, that is  it has a faithful representation by 
matrices of fixed degree over a field. On the other hand, the concept of {\bf $t$-linear} profinite group takes into account the topology of $G$ and means that $G$ can be faithfully  represented as a closed subgroup of the group of invertible  matrices of fixed degree over a profinite commutative ring. 

It is commonly believed that a non-abelian free pro-$p$ group is not $t$-linear (see, \cite[Conjecture 3.8]{LS94}, the discussion after \cite[Theorem 1.1]{BL99} and \cite[Section 5.3]{Sh00}). An equivalent reformulation of this statement is that a $p$-adic analytic pro-$p$ group  satisfies  a non-trvial pro-$p$ identity. A.  Zubkov \cite{Zu87}  proved  that if $p>2$, then a non-abelian free pro-$p$ group cannot be represented by 2-by-2 matrices over a profinite commutative ring.    E. Zelmanov announced that given a
fixed $n$, a non-abelian free pro-$p$ group cannot be represented by $n$-by-$n$ matrices over a profinite commutative ring   for every large enough prime
$p>>n$ (see \cite{Ze05,Ze16}). Recently, D. El-Chai Ben-Ezra, E. Zelmanov  showed that a free pro-2 group  cannot be represented by 2-by-2 matrices over a profinite commutative ring of characteristic 2 \cite{EZ19}. 

Recall that by a result of A. Malcev \cite[Theorem IV]{Ma40}, a group can be represented by matrices of degree $n$ over a field if and only if every one of its finitely generated subgroup has this property. Thus, in order to decide  whether $F^{\Q}(X)$ or  $\mathbf F$ are linear, we have to analyze the structure of their finitely generated (abstract) subgroups.  In order to apply the Malcev criterion one should find  a uniform $n$ which does not depend on a finitely generated subgroup. We may ask a weaker question of whether $F^{\Q}(X)$ and $\mathbf F$ are locally linear. Using recent advances in geometric group theory one can answer this positively in the case of $F^{\Q}(X)$.

\begin{teo} \label{linear} The groups $F^{\Q}(X)$ are locally linear over $\Z$.
\end{teo}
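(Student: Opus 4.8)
The plan is to realize every finitely generated subgroup of $F^{\Q}(X)$ inside a word-hyperbolic group carrying a quasiconvex hierarchy, and then to appeal to the theory of special cube complexes.

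First I would reduce, exactly as in the proof of Corollary \ref{residuallynilpotent} (applied with the countable ring $\Q$, and using that $F^{\Q}(X_0)$ is a retract of $F^{\Q}(X)$ to pass to a finite subset $X_0\subseteq X$), to the following situation: a given finitely generated subgroup of $F^{\Q}(X)$ lies inside a subgroup $G_k\le F^{\Q}(X)$ which is the last term of a chain $G_0\le G_1\le\cdots\le G_k$, where $G_0$ is finitely generated free and $G_{i+1}=G_i*_{A_i}B_i$ with $A_i=G_i\cap B_i$ a maximal abelian subgroup of $G_i$ and $B_i$ a finitely generated abelian subgroup of $F^{\Q}(X)$. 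Since $B_i$ is a finitely generated subgroup of a copy of $(\Q,+)$ it is infinite cyclic, hence $A_i\cong\Z$ and $A_i$ has finite index in $B_i$. I would then show by induction on $i$ that $G_i$ is torsion-free and word-hyperbolic. This is clear for $i=0$; and given it for $G_i$, the subgroup $A_i$ is a maximal cyclic subgroup of the torsion-free word-hyperbolic group $G_i$, hence malnormal and quasiconvex in $G_i$, while it is quasiconvex of finite index in $B_i\cong\Z$; the malnormality of $A_i$ in $G_i$ makes the amalgam $G_i*_{A_i}B_i$ acylindrical, so the Bestvina--Feighn combination theorem shows that $G_{i+1}$ is word-hyperbolic, and $G_{i+1}$ is torsion-free because an amalgamated free product of torsion-free groups is torsion-free.

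The same chain of splittings, continued by splitting $G_0$ as an iterated free product down to infinite cyclic groups, exhibits a quasiconvex hierarchy for the word-hyperbolic group $G_k$. By Wise's theorem that a word-hyperbolic group with a quasiconvex hierarchy is virtually special (in its current form using Agol's proof of the virtually special conjecture) --- equivalently, by cubulating the graph of free groups with cyclic edge groups in the manner of Hsu--Wise and then invoking Agol's theorem --- the group $G_k$ is virtually special, and hence virtually embeds into a right-angled Artin group by Haglund--Wise. Since right-angled Artin groups are linear over $\Z$ and $G_k$ is torsion-free, inducing a faithful integral representation of a finite-index subgroup of $G_k$ up to $G_k$ gives an embedding $G_k\hookrightarrow\GL_n(\Z)$ for some $n$; therefore every finitely generated subgroup of $F^{\Q}(X)$, being contained in such a $G_k$, is linear over $\Z$. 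The main obstacle is the inductive hyperbolicity statement: one must check that the combination theorem genuinely applies, which rests on the amalgamating subgroups $A_i$ being maximal cyclic --- hence malnormal --- in the torsion-free hyperbolic groups $G_i$ and this malnormality persisting after each amalgamation; this is precisely what rules out Baumslag--Solitar subgroups and supplies the \emph{quasiconvexity} of the hierarchy needed to run Wise's machinery.
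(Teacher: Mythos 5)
Your proposal is correct and follows essentially the same route as the paper: decompose the finitely generated subgroup via Proposition \ref{MR} into an iterated root adjunction over maximal (mal\-normal, quasiconvex) cyclic subgroups, get hyperbolicity from the Bestvina--Feighn combination theorem, cubulate via Hsu--Wise, apply Agol's virtual specialness theorem, and conclude $\Z$-linearity from Haglund--Wise. The only cosmetic differences are that you derive malnormality of $A_i$ from torsion-free hyperbolicity rather than from the CSA property of the $G_i$ (the paper cites \cite[Theorem 5]{MR96}), and that you spell out the induction-of-representations step from the finite-index special subgroup.
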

\begin{proof} Let $G$ be a finitely generated subgroup of $F^{\Q}(X)$. Then  by Proposition \ref{MR}, there exists $k\ge 0$ and   a sequence $G_0\le G_1\le \ldots\le  G_k$ of subgroups of $F^{\Q}(X)$,  such that $G_0$ is free, $G_{i+1}$ is obtained from $G_i$ by adjoining a root and $G\le G_k$. By \cite[Theorem 5]{MR96}, the groups $G_i$ are CSA. Thus, from a corollary on the page 100 of \cite{BF92} we obtain that $G$ is hyperbolic.   Therefore,  \cite[Corollary C]{HW15} ensures, by induction on $i$, that each $G_i$ acts properly and cocompactly on a CAT(0) cube complex. Hence, by \cite[Theorem 1.1]{Ag13},  $G_k$ has a finite index subgroup acting faithfully and specially on a CAT(0) cube complex.
 Finally from \cite[Theorem 1.1]{HW08} it follows that $G_k$, and so $G$,  are linear over $\Z$.\end{proof}
I wouldn't be surprised if the groups $F^{\Q[t]}(X)$ are also locally linear. However, to decide whether $F^{\Q}(X)$ and $F^{\Q[t]}(X)$ are linear will require a completely new approach.

\end{document}